\newtheorem{theorem}{Theorem}[section]
\newtheorem{theoremM}{Theorem}
\newtheorem{proposition}[theorem]{Proposition}
\newtheorem{lemma}[theorem]{Lemma}
\newtheorem{corollary}[theorem]{Corollary}
\theoremstyle{definition}
\newtheorem{definition}[theorem]{Definition}
\newtheorem*{definitionM}{Definition}
\newtheorem{notation}[theorem]{Notation}
\newtheorem{convention}[theorem]{Convention}
\newtheorem{example}[theorem]{Example}
\newtheorem{question}[theorem]{Question}
\newtheorem{remark}[theorem]{Remark}
\newtheorem{setting}[theorem]{Setting}
\newcommand{\Hc}{\mathcal{H}}
\newcommand{\Ic}{\mathcal{I}}
\newcommand{\Jc}{\mathcal{J}}
\newcommand{\Kc}{\mathcal{K}}
\newcommand{\NN}{\mathbb{N}}
\newcommand{\init}{\mathop{\rm in}\nolimits}
\newcommand{\rev}{\mathop{<_{\rm rev}}\nolimits}
\newcommand{\HF}{\mathop{\rm HF}\nolimits}
\newcommand{\field}{\mathbbm{k}}
\newcommand{\pf}{\mathrm{pf}}
\newcommand{\Xgen}{X^{\mathrm{gen}}}
\newcommand{\Xsym}{X^{\mathrm{sym}}}
\newcommand{\Xsk}{X^{\mathrm{skew}}}
\newcommand{\Sgen}{S^{\mathrm{gen}}}
\newcommand{\Ssym}{S^{\mathrm{sym}}}
\newcommand{\Ssk}{S^{\mathrm{skew}}}
\renewcommand\labelenumi{(\roman{enumi})}
\renewcommand\theenumi\labelenumi
\title[On strongly Koszul algebras and tidy Gr\"obner bases]{On strongly Koszul algebras \\and tidy Gr\"obner bases}
\author{Alessio D'Al\`i}
\address{Dipartimento di Matematica, Politecnico di Milano, Italy}
\email{alessio.dali@polimi.it}
\subjclass[2020]{Primary: 16S37; Secondary: 13P10, 13C40.}
\keywords{Strongly Koszul algebra, Koszul algebra, quadratic Gr\"obner basis, tidy polynomial, Macaulay's inverse system.}
\begin{document}

\begin{abstract}
Strongly Koszul algebras were introduced by Herzog, Hibi and Restuccia in 2000. The goal of the present paper is to provide an in-depth study of such algebras and to investigate how strong Koszulness interacts with the existence of a quadratic Gr\"obner basis for the defining ideal. Firstly, we prove that the existence of a quadratic revlex-universal Gr\"obner basis with a strong sparsity condition (that we name ``tidiness'') is a sufficient condition for strong Koszulness, and exhibit several concrete examples arising from determinantal objects and Macaulay's inverse system. We then prove that there exist standard graded algebras that are strongly Koszul but do not admit a Gr\"obner basis of quadrics even after a linear change of coordinates, thus answering negatively a question posed by Conca, De Negri and Rossi. As a bonus, we prove that strong Koszulness behaves well under tensor and fiber products of algebras and illustrate how Severi varieties and Macaulay's inverse system interact to produce examples of strongly Koszul algebras with a geometric flavor.
\end{abstract}

\maketitle

\tableofcontents

\section{Introduction}

A standard graded $\field$-algebra $R$ is said to be Koszul if the minimal graded free resolution of the residue field $\field$ as an $R$-module is linear. Since their introduction by Priddy in 1970 \cite{Priddy}, Koszul algebras have been a pivotal object in commutative algebra both for their ubiquity and their useful properties; for more information, we refer the interested reader to the surveys \cite{CDR} and \cite{Fr}. Since checking Koszulness by making use of the definition only is often an arduous task, mathematicians have been interested in finding necessary and sufficient conditions for Koszulness, introducing several variations and strengthenings in the process. In the present paper we examine one of such strengthenings, namely, the concept of \emph{strong Koszulness}. The first definition of a strongly Koszul algebra was given by Herzog, Hibi and Restuccia in 2000 \cite{HHR}; throughout this paper, however, we will adopt a later definition due to Conca, De Negri and Rossi \cite{CDR}, which is a priori more restrictive but easier to use. 

\begin{definitionM}[strongly Koszul algebra {\cite[Definition 3.11]{CDR}}]
    Let $R$ be a standard graded $\field$-algebra, where $\field$ is a field. If there exists a $\field$-basis $B$ of $R_1$ such that, for any proper subset $A \subsetneq B$ and any $b \in B \setminus A$, the colon ideal $(a \mid a \in A) :_R b$ is generated by elements of $B$, we say that $R$ is \emph{strongly Koszul} (with respect to $B$).
\end{definitionM}

The Conca--De Negri--Rossi definition does not rely on an ordering of the $\field$-basis $B$, whereas the original Herzog--Hibi--Restuccia one does. In the toric case, the two definitions are known to be equivalent: see, e.g., Murai \cite[Lemma 3.18 and Lemma 3.19]{MuraiKoszul}. However, it is not known to us whether a standard graded algebra that is strongly Koszul with respect to the Herzog--Hibi--Restuccia definition but not with respect to the Conca--De Negri--Rossi one exists.\footnote{The ideal defining such an object should be generated by at least two quadrics and not admit a tidy revlex-universal Gr\"obner basis of quadrics by \Cref{introthm:main} below: in particular, such an ideal should be neither monomial nor toric, see \Cref{cor:monomial strongly Koszul} and \Cref{cor:toric strongly Koszul}.}

Strongly Koszul algebras inspired Conca, Trung and Valla to introduce Koszul filtrations \cite{ConcaTrungValla} as a more flexible tool to establish Koszulness for a given algebra. Indeed, strongly Koszul algebras form a rather small subset of all Koszul algebras; moreover, it is not always clear which $\field$-basis of $R_1$ one should pick as a candidate. Because of this latter hindrance, mathematicians have often studied strong Koszulness in the presence of some additional structure: a popular choice is asking for the algebra to be toric (see for instance \cite{MatsudaOhsugi}, \cite[Section 3.3]{MuraiKoszul}, \cite[Section 6]{2xe}, \cite{HibiMatsudaOhsugi}, \cite{OhsugiHerzogHibi}), while Nguyen addressed the toric face ring case in \cite{NguyenToricFaceRing}. More recently, VandeBogert has studied in \cite{vandebogert} the behavior of certain monomial ideals living in strongly Koszul algebras.

\noindent Strong Koszulness has made its appearance also outside of commutative algebra: indeed, noncommutative analogues of strongly Koszul algebras are featured for instance in \cite{piontkovskii}, \cite{noncommutative1} and \cite{noncommutative2}.

\subsection{What's in this paper?}
The aim of this paper is to get a better understanding of strong Koszulness, with a focus on its interactions with Gr\"obner-related conditions. Our first result shows that strong Koszulness behaves well with respect to tensor and fiber products of standard graded $\field$-algebras, in the same vein as what was proved by Herzog, Hibi and Restuccia in the toric case \cite[Proposition 2.3(b)]{HHR}. In the statement below, $\iota_i$ is the natural injection of $R_{(i)}$ inside the relevant tensor or fiber product: see \Cref{setting} and \Cref{notation:tensor and fiber}.

\begin{theoremM}[see \Cref{thm:strongly Koszul tensor and fiber products}] \label{introthm:tensor and fiber}
A tensor or fiber product of two standard graded $\field$-algebras $R_{(1)}$ and $R_{(2)}$ is strongly Koszul with respect to the $\field$-basis $\iota_1(B_1) \cup \iota_2(B_2)$ if and only if each $R_{(i)}$ is strongly Koszul with respect to $B_i$.
\end{theoremM}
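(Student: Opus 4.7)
The plan is to reduce strong Koszulness of the tensor or fiber product to strong Koszulness of each factor via a direct computation of colon ideals $(A) :_R b$. Fix $A \subsetneq B$ and $b \in B \setminus A$; since $B = \iota_1(B_1) \sqcup \iota_2(B_2)$ as a set, $A$ decomposes uniquely as $\iota_1(A_1) \sqcup \iota_2(A_2)$ with $A_i \subseteq B_i$, and by symmetry I may assume $b = \iota_1(b_1)$ for some $b_1 \in B_1 \setminus A_1$.

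Writing $I_i$ for the ideal of $R_{(i)}$ generated by $A_i$, the crux of the argument consists in two formulas for $(A) :_R \iota_1(b_1)$. In the tensor product case, the quotient $R/(A)$ is canonically isomorphic to $(R_{(1)}/I_1) \otimes_\field (R_{(2)}/I_2)$, which is free as a module over $R_{(1)}/I_1$; since the class of $\iota_1(b_1)$ acts as $\overline{b_1} \otimes \mathrm{id}$, its annihilator equals $\bigl((I_1 :_{R_{(1)}} b_1)/I_1\bigr) \otimes_\field (R_{(2)}/I_2)$, and taking the preimage in $R$ gives
\[
(A) :_R \iota_1(b_1) \;=\; \bigl(\iota_1((A_1) :_{R_{(1)}} b_1) \,\cup\, \iota_2(A_2)\bigr),
\]
where the outer parentheses denote the ideal of $R$ generated by the enclosed set. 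In the fiber product case, the vanishing of the mixed products $\iota_1((R_{(1)})_{\geq 1}) \cdot \iota_2((R_{(2)})_{\geq 1})$ makes $(A)$ split as the internal direct sum $\iota_1(I_1) \oplus \iota_2(I_2)$; combined with $\iota_1(b_1) \cdot \iota_2((R_{(2)})_{\geq 1}) = 0$, a short graded computation in $R_{(1)}$ yields
\[
(A) :_R \iota_1(b_1) \;=\; \bigl(\iota_1((A_1) :_{R_{(1)}} b_1) \,\cup\, \iota_2(B_2)\bigr).
\]

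From these formulas the \emph{if} direction is immediate: strong Koszulness of $R_{(1)}$ supplies a subset $B'_1 \subseteq B_1$ generating $(A_1) :_{R_{(1)}} b_1$, while the second summand on each right-hand side is already generated by elements of $\iota_2(B_2)$, so $(A) :_R \iota_1(b_1)$ is generated by elements of $B$. For the \emph{only if} direction I would specialise to $A := \iota_1(A_1)$ (so $A_2 = \emptyset$): strong Koszulness of $R$ provides a generating set $B' \subseteq B$ of $(A) :_R \iota_1(b_1)$, and I claim that $B'_1 := \iota_1^{-1}(B' \cap \iota_1(B_1))$ generates $(A_1) :_{R_{(1)}} b_1$. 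I expect this claim to be the main technical point: in the tensor case any element of $B' \cap \iota_2(B_2)$ must be redundant, because $1 \notin (A_1) :_{R_{(1)}} b_1$ (since $b_1$ is not in the $\field$-linear span of $A_1$), and faithful flatness of $R_{(2)}$ over $\field$ then descends the resulting equality of ideals from $R$ to $R_{(1)}$; the fiber product analogue is an easier graded separation of the $R_{(1)}$-part from the $R_{(2)}$-part.
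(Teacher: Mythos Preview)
Your argument is correct and rests on the same two colon--ideal identities as the paper (its Lemmas~2.4 and~2.5), so the overall architecture coincides. Two differences in execution are worth recording. For the tensor--product identity the paper passes through a $\mathrm{Tor}_1$ vanishing and a Five Lemma comparison of short exact sequences, whereas your route via freeness of $(R_{(1)}/I_1)\otimes_\field(R_{(2)}/I_2)$ over $R_{(1)}/I_1$ is more direct and equally valid; this is a genuine simplification. For the \emph{only if} direction, however, the paper takes a shorter path: it simply observes that $R_{(i)}\cong R/(\iota_{3-i}(B_{3-i}))$ and invokes the general fact (its Lemma~2.1) that strong Koszulness with respect to $B$ passes to quotients by subsets of $B$. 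This bypasses your specialisation to $A_2=\varnothing$ and the faithful--flatness descent, though your version is sound and has the virtue of keeping everything inside the colon--ideal calculus you set up.
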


We then turn to finding sufficient conditions for strong Koszulness that are easier to check than the definition itself. To state our main result, a new definition is in order:

\begin{definitionM}[tidy polynomial; see \Cref{def:tidy}]
Let $S = \field[x_1, \ldots, x_n]$ and let $f$ be a homogeneous polynomial in $S$. If each variable of $S$ divides at most one monomial in the support of $f$, we say that $f$ is \emph{tidy}. If a set $\mathcal{F} \subseteq S$ consists of tidy polynomials, we say that $\mathcal{F}$ is tidy.
\end{definitionM}

Examples of tidy polynomials include monomials and generators of homogeneous toric ideals: see \Cref{ex:tidy examples} for more instances. When the homogeneous ideal $I \subseteq S$ admits a degree reverse lexicographic Gr\"obner basis made of tidy polynomials, certain colon ideals over $S/I$ become monomial (see \Cref{prop:monomial colon ideals} for a detailed statement). Building on this observation, we get the core technical result of the present paper:

\begin{theoremM}[see \Cref{thm:main theorem}] \label{introthm:main}
If the homogeneous ideal $I \subseteq S =\field[x_1, \ldots, x_n]$ can be generated by a set of tidy quadrics forming a Gr\"obner basis with respect to all possible degree reverse lexicographic orders, then the standard graded $\field$-algebra $S/I$ is strongly Koszul with respect to the variables.
\end{theoremM}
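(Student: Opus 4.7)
My plan is to verify the Conca--De Negri--Rossi condition directly for $R = S/I$ with respect to the basis $B := \{x_1, \dots, x_n\}$: for each proper subset $A \subsetneq B$ and each $b \in B \setminus A$, I need to show that the colon ideal $(A) :_R b$ is generated by elements of $B$. Given such a pair $(A, b)$, my first move is to choose a degree reverse lexicographic order $\sigma$ on $S$ in which $b$ is the smallest variable; the hypothesis then provides a tidy quadratic Gr\"obner basis $G$ of $I$ with respect to $\sigma$. Applying \Cref{prop:monomial colon ideals} then immediately reduces the problem to the following assertion: the now-monomial ideal $(A) :_R b$ is actually generated in degree $1$.

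For the remaining reduction I would set $V := \{x \in B : xb \in (A) + I\}$, so that $A \subseteq V$ and $(V)R \subseteq (A) :_R b$ tautologically. By monomiality it suffices to show that every monomial $m \in S$ with $mb \in (A) + I$ lies in $(V) + I$, which I plan to prove by induction on $m$ in the $\sigma$-order (equivalently, by assuming $m$ a minimal monomial generator of the colon and deducing $\deg m = 1$). Starting from a Gr\"obner basis of $(A) + I$ built from $A$ together with (the Buchberger completion of) $G$, I would examine the first reduction step of $mb$: some $g \in G$ has $\init_\sigma(g) \mid mb$, producing two cases. If $\init_\sigma(g) \mid m$, tidiness of $g$ rewrites $m$ modulo $I$ as a combination of strictly $\sigma$-smaller monomials, each again in the colon, so induction applies. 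If instead $\init_\sigma(g) = x_p b$ with $x_p \mid m$, tidiness of $g$ forces the tail of $g$ to be supported on variables disjoint from $\{x_p, b\}$; combined with $mb \in (A) + I$, I would argue that $x_p b \in (A) + I$, that is $x_p \in V$, and induction on $m/x_p$ completes the step.

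The hardest part will be carrying out the descent rigorously in the first case, where the residue $(m/\init_\sigma(g)) \cdot \mathrm{tail}(g)$ is a polynomial with several monomial components of the same total degree as $mb$: individually these components need not lie in the colon, only their collective image does, so the induction must carry enough data to prevent an infinite regress. Tidiness is the key combinatorial feature that makes this descent terminate, because it guarantees that the smallest variable $b$ cannot reappear in the tail of any $g$ whose initial term involves $b$, and thus every reduction step genuinely simplifies the $b$-side of the equation. Packaging this into a clean inductive scheme, likely nested on $\deg m$ and the $\sigma$-leading term of the current residue, is where the bulk of the technical effort will be concentrated.
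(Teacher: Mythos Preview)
Your plan contains a genuine gap, and it can be closed with an input you do not invoke. A technical point first: with $b$ chosen as the smallest variable, \Cref{prop:monomial colon ideals} only yields monomiality of $(0):_R b$, not of $(A):_R b$ for an arbitrary $A$. The correct move (as in \Cref{cor:tidy revlex UGB}) is to make the variables in $A$ the smallest ones and place $b$ immediately above them; then \Cref{prop:monomial colon ideals} applies with $b$ in the role of $x_i$, and as a bonus $A\cup G$ is already a Gr\"obner basis of $(A)+I$, so no Buchberger completion is needed.

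The real issue, however, is your degree-$1$ step. The paper does not argue this by induction at all: it invokes the Ene--Herzog--Hibi result (\Cref{prop:ehh} and \Cref{cor:ehh}), which says that a revlex-universal Gr\"obner basis of quadrics --- with \emph{no} tidiness hypothesis --- already forces each $\overline{A}:_R \overline{b}$ to be generated by linear forms. The proof of the theorem is then one line: the colon is both linearly generated (from quadraticity, via \Cref{cor:ehh}) and monomially generated (from tidiness, via \Cref{cor:tidy revlex UGB}); extracting a minimal generating set from the monomial one gives linear monomials, i.e., variables. Your induction, by contrast, runs into exactly the obstacle you flag: in the first case the residue $-(m/\init_\sigma(g))\cdot\mathrm{tail}(g)$ is a genuine polynomial whose individual monomial summands need not lie in the colon, and you do not supply the ``clean inductive scheme'' you promise. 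Without it the argument is incomplete; with it you would essentially be reproving \Cref{prop:ehh}. (Your second case is actually cleaner than you indicate: with $b$ smallest and $b\mid\init_\sigma(g)$, the revlex property forces $b$ to divide every term of $g$, so tidiness makes $g$ the monomial $x_p b$ itself and $x_p b\in I$ outright --- but this does not rescue the first case.)
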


\Cref{introthm:main} is our main tool to certify strong Koszulness. As a first application, we show that any quotient of a polynomial ring by a single homogeneous quadric is strongly Koszul (\Cref{prop:quadric hypersurface}). We also show that the ideal of $2$-minors of either a generic or a generic symmetric matrix defines a strongly Koszul algebra, and the same holds if we make the matrix sparser by introducing any number of zeros (\Cref{thm:2-minors}).

Extending previous work by Shafiei \cite{Sha}, we then turn to investigating the interaction between Macaulay's inverse system and certain determinantal objects. We prove that the ideal apolar to the module generated by maximal minors of a generic matrix defines a strongly Koszul algebra (\Cref{prop:generic apolarity}), and so does the ideal apolar to the module generated by maximal Pfaffians of a generic skew-symmetric matrix (\Cref{cor:Pfaffian apolarity}). This analysis prompts a further intriguing question with a geometric flavor: let $\field=\mathbb{C}$ and $X$ be one of the four \emph{Severi varieties} (see \Cref{subsec:Severi} for a detailed explanation). The secant variety of lines to any Severi variety is a cubic hypersurface of determinantal nature. Does the apolar ideal to this cubic polynomial define a strongly Koszul algebra? The answer turns out to be positive:

\begin{theoremM}[see \Cref{thm:Severi}] \label{introthm:Severi} 
    Let $X \subseteq \mathbb{P}^N_{\mathbb{C}}$ be a Severi variety, let $S$ be the coordinate ring of $\mathbb{P}^N$ and let $F$ be the cubic polynomial defining the secant variety of lines of $X$. Then the Artinian Gorenstein ring $S/\mathrm{ann}(F)$ is strongly Koszul with respect to the variables.
\end{theoremM}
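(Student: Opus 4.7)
The plan is to reduce the theorem to four case-by-case verifications and, in each case, exhibit a tidy revlex-universal Gr\"obner basis of quadrics for $\mathrm{ann}(F)$, so as to invoke \Cref{introthm:main}. Concretely, the four Severi varieties are the Veronese surface $v_2(\mathbb{P}^2)\subseteq\mathbb{P}^5$, the Segre variety $\mathbb{P}^2\times\mathbb{P}^2\subseteq\mathbb{P}^8$, the Grassmannian $G(1,5)\subseteq\mathbb{P}^{14}$ and the Cayley plane $\mathbb{OP}^2\subseteq\mathbb{P}^{26}$, and the associated cubics $F$ are, respectively, the determinant of a generic $3\times 3$ symmetric matrix, the determinant of a generic $3\times 3$ matrix, the Pfaffian of a generic $6\times 6$ skew-symmetric matrix, and the Cartan cubic (the ``octonionic determinant'' of a $3\times 3$ Hermitian matrix of octonions).

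For cases (ii) and (iii), the cubic $F$ is the unique maximal minor (respectively, maximal Pfaffian) of the ambient generic matrix. The module generated by those maximal minors or Pfaffians is therefore one-dimensional and spanned by $F$ alone, so the apolar ideal of that module coincides with $\mathrm{ann}(F)$; hence strong Koszulness with respect to the variables is an immediate consequence of \Cref{prop:generic apolarity} and \Cref{cor:Pfaffian apolarity}. The symmetric case (i) would be handled in the same spirit: one writes a tidy revlex-universal quadratic Gr\"obner basis for the apolar ideal of the $3\times 3$ symmetric determinant, structurally modelled on the generic case with the obvious identifications of off-diagonal entries, and invokes \Cref{introthm:main} directly.

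The substantial work lies in case (iv). I would introduce the $27$ variables as the entries of a $3\times 3$ octonionic Hermitian matrix, with three diagonal (``real'') variables $d_1, d_2, d_3$ and three octonionic off-diagonal blocks $u_1, u_2, u_3$ of $8$ variables each, so that the Cartan cubic takes its standard shape
\[ F \;=\; d_1 d_2 d_3 \;-\; \sum_{i=1}^{3} d_i\,\langle u_i,u_i\rangle \;+\; 2\,\mathrm{Re}(u_1 u_2 u_3), \]
where $\langle\cdot,\cdot\rangle$ is the octonionic inner product and the final term is the real part of an octonion product. From this explicit formula I would compute $\mathrm{ann}(F)$ via the apolarity pairing and sort its quadratic generators by type (pure diagonal, mixed diagonal--octonionic, and pure octonionic); I would then show that they already form a Gr\"obner basis for every revlex order which arranges the diagonal variables last within each block, and finish by inspecting supports to confirm that each variable divides at most one monomial per generator, so that tidiness holds and \Cref{introthm:main} applies.

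The main obstacle, as anticipated, is the Cayley plane case: the non-associativity of the octonions makes the bookkeeping of the apolar quadrics and the S-pair analysis considerably more delicate than in the associative matrix situations. A clean way to organize the verification would be to exploit the $\mathrm{Spin}(9)$-symmetry (or, more generally, the $F_4$-symmetry) of the Cartan cubic to reduce the number of S-pairs that must be checked; failing that, a direct computer-algebra verification of the Gr\"obner basis property and tidiness using the explicit generators would provide the final step.
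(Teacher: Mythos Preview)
Your handling of cases (ii) and (iii) matches the paper exactly: those are immediate specializations of \Cref{prop:generic apolarity} and \Cref{cor:Pfaffian apolarity}.

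There is, however, a genuine gap in case (i). Your plan is to produce a tidy revlex-universal Gr\"obner basis of quadrics for $\mathrm{ann}(F)$ ``structurally modelled on the generic case'', but no such basis exists in the standard coordinates. The symmetric determinant contains the term $-X_{12}^2X_{33}$, so $x_{12}^2\circ_{\mathrm{ctr}}F=-X_{33}\neq 0$ and hence $x_{12}^2\notin\mathrm{ann}(F)$. By the Eisenbud--Reeves--Totaro obstruction discussed just before \Cref{qu:Veronese}, an Artinian ideal with a revlex-universal quadratic Gr\"obner basis must contain the square of every variable; thus $\mathrm{ann}(F)$ cannot have one here. The paper does \emph{not} go through \Cref{introthm:main} for this case: it verifies strong Koszulness by direct computation of the colon ideals, and in fact leaves open (\Cref{qu:Veronese}) whether a tidy revlex-universal quadratic Gr\"obner basis exists even after a linear change of coordinates. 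Your proposed argument, as stated, would fail.

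For case (iv) your route diverges from the paper and, as written, is also incomplete. You propose to check only those revlex orders ``which arrange the diagonal variables last within each block''; but \Cref{introthm:main} requires \emph{all} revlex orders, and the symmetry groups you mention do not permute the $27$ variables transitively in a way that reduces the general order to your special ones. The paper takes a quite different path: it uses Lurie's presentation of the Cartan cubic as a signed sum over the $45$ tritangent planes of a smooth cubic surface, labels the $27$ variables by the $27$ lines, and exploits the incidence combinatorics of lines and tritangent planes (\Cref{lem:27 lines}) to show that an explicit set of $243$ monomials and $270$ binomials is a tidy Gr\"obner basis for \emph{every} revlex order simultaneously. That combinatorial organization is what makes the ``all orders at once'' verification tractable; your octonionic-matrix bookkeeping would need a substantially stronger argument (or an exhaustive computer check over all $27!$ orderings, which is not what you describe) to reach the same conclusion.
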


The proof of \Cref{introthm:Severi} requires quite a lot of combinatorial gymnastics, especially for the case when $X$ is the \emph{Cayley plane}, where the intersection patterns of the 27 lines on a smooth cubic enter the picture. It is currently unclear to us how to use geometry to shed further light on \Cref{introthm:Severi} or to simplify its proof, and we leave these questions open for future work.

Finally, we answer negatively a question by Conca, De Negri and Rossi \cite[Question 4.13(1)]{CDR} about the relation between strong Koszulness and the existence of a quadratic Gr\"obner basis for the defining ideal.

\begin{theoremM}[see \Cref{prop:smallest counterexample} and \Cref{prop:many counterexamples}] \label{introthm:strongly Koszul but no qGB}
There exist strongly Koszul algebras whose defining ideal does not admit a Gr\"obner basis of quadrics even after a linear change of coordinates.
\end{theoremM}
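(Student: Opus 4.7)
The plan is to construct explicit standard graded algebras $R=S/I$ that are strongly Koszul with respect to some $\field$-basis of $R_1$, yet whose defining ideal $I$ admits no Gr\"obner basis of quadrics under any linear change of coordinates. Two independent properties must be verified: strong Koszulness of $R$ (a positive statement) and non-existence of an LG-quadratic structure on $I$ (a negative statement that has to hold against every coordinate change).

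For the \emph{smallest counterexample} I would restrict attention to Artinian Gorenstein algebras, parametrising candidates via Macaulay's inverse system as $R=S/\mathrm{ann}(F)$ for a suitable homogeneous form $F$ of low socle degree in few variables. This framework is natural because it meshes with the inverse-system techniques used elsewhere in the paper and makes a computer-assisted search feasible. Taking $B$ to be the set of variables of $S$, strong Koszulness with respect to $B$ reduces to a finite list of colon-ideal verifications: for every proper subset $A$ of the variables and every variable $b \notin A$, one checks that $(A):_R b$ is generated by variables. The inverse system viewpoint makes these checks concrete, since each such colon ideal corresponds to a readily computable subspace of $S$ acting by contraction on $F$.

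For the non-existence of an LG-quadratic structure, suppose for contradiction that some linear change of coordinates $\phi$ makes $\phi(I)$ admit a quadratic Gr\"obner basis; then $\init(\phi(I))$ is a quadratic monomial ideal $J$ with $\HS_{S/J}=\HS_{S/R}$ and $\dim_\field J_2 = \dim_\field I_2$. The strategy is to enumerate, up to permutation of variables, the finitely many quadratic monomial ideals $J$ with these invariants, and to rule each one out by exhibiting a coordinate-free obstruction satisfied by $R$ but violated by $S/J$. Typical such obstructions include the Hilbert function in higher degrees, low-degree Betti numbers, or constraints coming from the Gorenstein property being forced to survive the passage to the initial ideal. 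This is the step I expect to be the main obstacle: because $\phi$ is arbitrary one cannot simply inspect $\init(I)$, and one has to find an invariant robust enough to survive every linear change of coordinates while still being tractable to compare against each candidate $J$.

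For the production of \emph{many counterexamples}, I would use the algebra $R$ constructed above as a building block. By \Cref{introthm:tensor and fiber}, tensor and fiber products with strongly Koszul algebras preserve strong Koszulness, so iterating such operations yields an infinite family of strongly Koszul algebras. The remaining task is to argue that non-LG-quadraticity is also propagated through these constructions; heuristically, a quadratic Gr\"obner basis of the combined algebra should, after an arbitrary linear change of coordinates, restrict in a controlled way to one of the factors, contradicting the choice of $R$. Making this argument precise is technically delicate because of the freedom to mix variables across factors under a linear transformation, and this is the secondary obstacle of the proof; the most economical workaround is likely to exploit extra structure (for example, the grading induced by the tensor factors or the presence of distinguished socle elements in the fiber product) to restrict the admissible coordinate changes enough for the reduction to go through.
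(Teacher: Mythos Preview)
Your setup for the single counterexample---parametrise Artinian Gorenstein rings via Macaulay's inverse system and verify strong Koszulness by checking finitely many colon ideals against the variable basis---matches the paper exactly; the paper's example is $S/\mathrm{ann}(F)$ with $F = X^2Y+Y^2Z+Z^2T+T^2X$ in four variables.

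Where you diverge substantially is in ruling out G-quadraticity. You propose to enumerate the quadratic monomial ideals with the right Hilbert series and eliminate each via some unspecified coordinate-free invariant (higher Hilbert function, Betti numbers, or ``constraints coming from the Gorenstein property''). The paper instead invokes a single criterion of Eisenbud--Reeves--Totaro: if $S/I$ is Artinian and $I$ admits a quadratic Gr\"obner basis after \emph{some} linear change of coordinates, then $I$ must contain $\ell^2$ for some linear form $\ell$. Via the differentiation pairing, the non-existence of such an $\ell$ is equivalent to $S/J$ being Artinian, where $J$ is generated by $(I_2)^{\perp}$---a single concrete calculation. Your enumeration could in principle succeed, but you never pin down the invariant; note also that Gorensteinness does \emph{not} pass to initial ideals in general, so that particular suggestion is not usable. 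The ERT criterion is what actually makes the argument short and robust against all coordinate changes at once.

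For infinitely many examples you propose to propagate the base example via tensor or fiber products using \Cref{introthm:tensor and fiber}. The paper does something entirely different: it writes down an explicit one-parameter family of Artinian level rings, indexed by the cycle graphs $C_n$ for $n\geq 5$, and reapplies the same ERT criterion to each member directly. Your route carries the gap you yourself flag---there is no evident reason non-G-quadraticity should descend from a tensor or fiber product to a factor once arbitrary linear changes of coordinates mixing the two variable sets are allowed, and no standard lemma supplies this. The paper sidesteps the issue completely by never needing any such reduction; staying in the Artinian regime also keeps the ERT obstruction available throughout.
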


\subsection{Structure of the paper}
In \Cref{sec:strong Koszulness} we examine some direct consequences of the Conca--De Negri--Rossi definition of strong Koszulness and prove \Cref{introthm:tensor and fiber}. In \Cref{sec:tidy} we introduce tidy polynomials and prove \Cref{introthm:main}. \Cref{sec:applications} contains some applications of \Cref{introthm:main}: in particular, one has that a quadric hypersurface is always strongly Koszul (\Cref{prop:quadric hypersurface}) and that coordinate sections of $S/I$, where $I$ is defined by $2$-minors of a generic or generic symmetric matrix, are strongly Koszul algebras with respect to the variables (\Cref{thm:2-minors}). In \Cref{sec:apolarity and strong Koszulness} we investigate classes of strongly Koszul algebras arising from the interaction between Macaulay's inverse system and certain determinantal objects (see \Cref{prop:generic apolarity} and \Cref{cor:Pfaffian apolarity}), proving in particular \Cref{introthm:Severi}. Finally, in \Cref{sec:not G-quadratic} we prove \Cref{introthm:strongly Koszul but no qGB}, i.e., we construct examples of strongly Koszul algebras whose defining ideal does not admit a Gr\"obner basis of quadrics even after a linear change of coordinates (\Cref{prop:smallest counterexample} and \Cref{prop:many counterexamples}).

\phantom{ }

\noindent \textbf{Acknowledgements.} The author thanks Mats Boij, Aldo Conca, Ritvik Ramkumar, and Keller VandeBogert for their valuable comments. Special thanks go to Alessio Sammartano for many useful discussions on the topics of the present paper. The author is a member of INdAM--GNSAGA and has been partially supported by the PRIN 2020355B8Y grant ``Squarefree Gr\"obner degenerations, special varieties and related topics'' and by the PRIN 2022K48YYP grant ``Unirationality, Hilbert schemes, and singularities''.

\section{Strong Koszulness under algebra operations} \label{sec:strong Koszulness}

\begin{lemma} \label{lem:SK and quotients}
Let $R$ be a standard graded $\field$-algebra and let $B$ be a $\field$-basis of $R_1$. Let $A \subseteq B$ and $R' := R/(a \mid a \in A)$. If $R$ is strongly Koszul with respect to $B$, then $R'$ is strongly Koszul with respect to $\{\overline{b} \mid b \in B \setminus A\}$.
\end{lemma}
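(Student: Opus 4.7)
The idea is that the basis $\{\overline{b}\mid b\in B\setminus A\}$ of $R'_1$ is obtained by first augmenting $A'\subsetneq B\setminus A$ with the generators $A$ in $R$, running the strong-Koszul hypothesis for $R$ upstairs, and then pushing the resulting generators down to $R'$.

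First, I would record the easy preliminary fact that $\{\overline{b}\mid b\in B\setminus A\}$ really is a $\field$-basis of $R'_1$. Since the ideal $I:=(a\mid a\in A)$ is generated in degree $1$, its degree-one piece is exactly $I_1=\bigoplus_{a\in A}\field a$, and the quotient $R_1/I_1$ therefore has the claimed basis.

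Next, to verify the strong Koszulness condition for $R'$ with respect to this basis, I would pick an arbitrary proper subset $A'\subsetneq\{\overline{b}\mid b\in B\setminus A\}$ and an element $\overline{b_0}$ in its complement. Writing $A'=\{\overline{c}\mid c\in C\}$ for some $C\subsetneq B\setminus A$, one has $b_0\in B\setminus(A\cup C)$, so in particular $A\cup C\subsetneq B$. The key computation is the identification
\[
\bigl(\overline{c}\mid c\in C\bigr):_{R'}\overline{b_0}\;=\;\pi\!\left(\bigl(a\mid a\in A\cup C\bigr):_R b_0\right),
\]
where $\pi\colon R\to R'$ is the projection. This is routine: an element $\overline{r}\in R'$ multiplies $\overline{b_0}$ into $(\overline{c}\mid c\in C)$ if and only if $rb_0\in(a\mid a\in A\cup C)$ in $R$, since the ideal we quotient by already contains $(a\mid a\in A)$.

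Finally, I would invoke the strong Koszulness of $R$ with respect to $B$: because $A\cup C$ is a proper subset of $B$ missing $b_0$, the colon ideal $(a\mid a\in A\cup C):_R b_0$ admits a generating set $B_0\subseteq B$. Applying $\pi$, the elements of $B_0\cap A$ are killed while those in $B_0\setminus A\subseteq B\setminus A$ map to elements of the proposed basis of $R'_1$; together they generate the colon ideal on the left-hand side above. No step here presents a genuine obstacle: the only thing to be slightly careful about is the bookkeeping that ensures $A\cup C$ is still a proper subset of $B$ (which is guaranteed by $b_0\notin A\cup C$) so that the hypothesis on $R$ is actually applicable.
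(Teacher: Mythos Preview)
Your proposal is correct and follows essentially the same approach as the paper: lift the colon ideal in $R'$ to the colon $(a\mid a\in A\cup C):_R b_0$ in $R$, apply the strong Koszulness hypothesis there, and push the resulting generators back down via $\pi$. Your write-up is slightly more explicit than the paper's in verifying that $\{\overline{b}\mid b\in B\setminus A\}$ is a basis of $R'_1$ and in tracking which generators survive under $\pi$, but the argument is the same.
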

\begin{proof}

    For this proof, let $\pi\colon R \twoheadrightarrow R'$ denote the natural surjection. 
    If $A=B$, then $R' = \field$ and the claim vacuously holds. Let us now assume that $A \subsetneq B$. Let $C \subsetneq B \setminus A$ and $b \in B \setminus (A \cup C)$, and let $\mathcal{A}$ and $\mathcal{C}$ be the ideals of $R$ generated respectively by the elements in $A$ and in $C$. We need to prove that the colon ideal $(\overline{c} \mid c \in C) :_{R'} \overline{b} = \pi(\mathcal{C}) :_{R'} \overline{b}$ is generated by nonzero elements of $\pi(B)$. One has that \[\pi(\mathcal{C}) :_{R'} \overline{b} = \pi({(\mathcal{C} + \mathcal{A}) :_R b}) = \pi((f \mid f \in C \cup A) :_R b).\]
    Since by assumption $R$ is strongly Koszul with respect to $B$, the colon ideal $(f \mid f \in C \cup A) :_R b$ is generated by elements of $B$, and hence $\pi(\mathcal{C}) :_{R'} \overline{b}$ is generated by elements of $\pi(B)$, which ends the proof.
\end{proof}

It is our next goal to investigate the behavior of strong Koszulness under tensor and fiber products of standard graded $\field$-algebras. Let us first recall some general facts and establish notation for later use.
 
\begin{setting} \label{setting}
    Let $R_{(1)}$ and ${R_{(2)}}$ be standard graded $\field$-algebras and let $\mathfrak{m}_{(1)}$ and $\mathfrak{m}_{(2)}$ be their homogeneous maximal ideals.
\begin{itemize}
    \item Consider the ring $R_{(1)} \otimes_{\field} R_{(2)}$, the tensor product of $R_{(1)}$ and $R_{(2)}$ over $\field$. Since $R_{(1)}$ is flat as a $\field$-vector space, the short exact sequence of $\field$-vector spaces $0 \to \field \to R_{(2)} \to \mathfrak{m}_{(2)} \to 0$ gives rise to an injection $\iota_1 \colon R_1 \to R_1 \otimes_{\field} R_2$ sending $r_1$ to $r_1 \otimes 1$. One gets an injection $\iota_2 \colon R_2 \to R_1 \otimes_{\field} R_2$ analogously. The tensor product $R_{(1)} \otimes_{\field} R_{(2)}$ is a standard graded $\field$-algebra with homogeneous maximal ideal $(\mathfrak{m}_{(1)})^{\mathrm e} + (\mathfrak{m}_{(2)})^{\mathrm e},$ where $(\mathfrak{m}_{(i)})^{\mathrm{e}}$ is the extension of the $R_{(i)}$-ideal $\mathfrak{m}_{(i)}$ with respect to $\iota_i$. Note that $(\mathfrak{m}_{(1)})^{\mathrm e} = \mathfrak{m}_{(1)} \otimes_{\field} R_{(2)}$ and $(\mathfrak{m}_{(2)})^{\mathrm e} = R_{(1)} \otimes_{\field} \mathfrak{m}_{(2)}$. The graded structure of $R_{(1)} \otimes_{\field} R_{(2)}$ is given by $(R_{(1)} \otimes_{\field} R_{(2)})_i = \bigoplus_{j=0}^{i}(R_{(1)})_j \otimes_{\field} (R_{(2)})_{i-j}$. In particular, if $B_i$ is a $\field$-basis of $(R_{(i)})_1$, then $\iota_1(B_1) \cup \iota_2(B_2)$ is a $\field$-basis of $(R_{(1)} \otimes_{\field} R_{(2)})_1$.
    \item The \emph{fiber product} of $R_{(1)}$ and ${R_{(2)}}$ (over $\field$) is
    \[R_{(1)} \circ {R_{(2)}} := \{(r_1, r_2) \in R_{(1)} \times {R_{(2)}} \mid \pi_{1}(r_1) = \pi_{2}(r_2)\},\]
    where each $\pi_{i} \colon {R_{(i)}} \twoheadrightarrow \field$ is the canonical surjection; in other words, $R_{(1)} \circ {R_{(2)}}$ is the pullback of the diagram $R_{(1)} \xtwoheadrightarrow{\pi_{1}} \field \xtwoheadleftarrow{\pi_{2}} {R_{(2)}}$. If $f \in R_{(i)}$, write $f$ as $\lambda + f'$, where $\lambda \in \field$ and $f' \in \mathfrak{m}_{(i)}$; this can be done in one and only one way, since $R_{(i)} \cong \field \oplus \mathfrak{m}_{(i)}$ as $\field$-vector spaces. The map $\iota_i\colon R_{(i)} \to R$ sending $f$ to $(f, \lambda)$ then yields an injection of $R_{(i)}$ into $R$. 
    The fiber product $R_{(1)} \circ R_{(2)}$ is a standard graded $\field$-algebra with maximal homogeneous ideal $(\mathfrak{m}_{(1)})^{\mathrm{e}} + (\mathfrak{m}_{(2)})^{\mathrm{e}},$ where $(\mathfrak{m}_{(i)})^{\mathrm{e}}$ is the extension of the $R_{(i)}$-ideal $\mathfrak{m}_{(i)}$ with respect to $\iota_i$. Homogeneous elements of $R_{(1)} \circ R_{(2)}$ of degree $i$ are precisely those pairs $(f,g)$ with $\deg_{R_{(1)}}(f) = \deg_{R_{(2)}}(g) = i$ (including the case when one or both of $f$ and $g$ are zero). If $B_i$ is a $\field$-basis of $(R_{(i)})_1$, then $\iota_1(B_1) \cup \iota_2(B_2)$ is a $\field$-basis of $(R_{(1)} \circ R_{(2)})_1$.
    
    \noindent Even though we will not need it here, we also note that the fiber product $R_{(1)} \circ {R_{(2)}}$ can be presented as
    \[R_{(1)} \circ {R_{(2)}} \cong \frac{R_{(1)} \otimes_{\field} {R_{(2)}}}{{\mathfrak{m}_{(1)}} \otimes_{\field} {\mathfrak{m}_{(2)}}},\]
    and the isomorphism preserves the standard graded structures described above.
\end{itemize}
\end{setting}

\begin{notation} \label{notation:tensor and fiber}
     Let $R_{(1)}$ and $R_{(2)}$ be standard graded $\field$-algebras and let $R$ be either the tensor product $R_{(1)} \otimes_{\field} R_{(2)}$ or the fiber product $R_{(1)} \circ R_{(2)}$. We will denote by $\iota_i$ the natural injection $R_{(i)} \to R$. If $J_{(i)}$ is an ideal of $R_{(i)}$, we will denote by $(J_{(i)})^{\mathrm{e}}$ the extension of the ideal $J_{(i)}$ with respect to $\iota_i$, i.e., the ideal of $R$ generated by the images of the generators of $J_{(i)}$ via $\iota_i$.
\end{notation}

The next technical statements about tensor products will prove useful in the rest of the section.

\begin{lemma} \label{lem:vanishing Tor_1}
Let $R_{(1)}$ and ${R_{(2)}}$ be standard graded $\field$-algebras, ${J_{(1)}} \subseteq R_{(1)}$ and ${J_{(2)}} \subseteq {R_{(2)}}$ be ideals, and $R = R_{(1)} \otimes_{\field} {R_{(2)}}$. Then \[\mathrm{Tor}^{R}_1\left(\frac{R}{{J_{(1)}} \otimes_{\field} {R_{(2)}}}, \ \frac{R}{R_{(1)} \otimes_{\field} {J_{(2)}}}\right) = 0.\]
\end{lemma}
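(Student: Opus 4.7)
The plan is to build a free $R$-resolution of $R/(J_{(1)} \otimes_{\field} R_{(2)})$ out of a free $R_{(1)}$-resolution of $R_{(1)}/J_{(1)}$, and then compute $\mathrm{Tor}^{R}_1$ against $R/(R_{(1)} \otimes_{\field} J_{(2)})$ directly from this resolution, relying throughout on the fact that every $\field$-module is flat. For brevity write $I_1 := J_{(1)} \otimes_{\field} R_{(2)}$ and $I_2 := R_{(1)} \otimes_{\field} J_{(2)}$, so that the canonical isomorphisms $R/I_1 \cong (R_{(1)}/J_{(1)}) \otimes_{\field} R_{(2)}$ and $R/I_2 \cong R_{(1)} \otimes_{\field} (R_{(2)}/J_{(2)})$ hold as $R$-modules.

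First I would construct the resolution. Let $F_\bullet \twoheadrightarrow R_{(1)}/J_{(1)}$ be a free $R_{(1)}$-resolution. Since $R_{(2)}$ is flat over $\field$, tensoring with $R_{(2)}$ over $\field$ preserves exactness, giving an exact complex $F_\bullet \otimes_{\field} R_{(2)} \twoheadrightarrow (R_{(1)}/J_{(1)}) \otimes_{\field} R_{(2)} \cong R/I_1$. Moreover, if $F_i \cong R_{(1)}^{n_i}$, then $F_i \otimes_{\field} R_{(2)} \cong R_{(1)}^{n_i} \otimes_{\field} R_{(2)} \cong R^{n_i}$, so each term is a free $R$-module. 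Hence $F_\bullet \otimes_{\field} R_{(2)}$ is a free $R$-resolution of $R/I_1$.

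Next I would compute the Tor using this resolution: apply $- \otimes_R (R/I_2)$ to $F_\bullet \otimes_{\field} R_{(2)}$. For a free module $F_i \otimes_{\field} R_{(2)} \cong R^{n_i}$, tensoring with $R/I_2$ amounts to quotienting by $I_2 R^{n_i}$; under the identification $F_i \otimes_{\field} R_{(2)} \cong R_{(1)}^{n_i} \otimes_{\field} R_{(2)}$, this quotient is $R_{(1)}^{n_i} \otimes_{\field} (R_{(2)}/J_{(2)}) \cong F_i \otimes_{\field} (R_{(2)}/J_{(2)})$, and the differentials are just those of $F_\bullet$ tensored with the identity on $R_{(2)}/J_{(2)}$. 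Hence the complex obtained is nothing but $F_\bullet \otimes_{\field} (R_{(2)}/J_{(2)})$. Since $R_{(2)}/J_{(2)}$ is flat over $\field$, this complex has homology concentrated in degree $0$, so $\mathrm{Tor}^R_i(R/I_1, R/I_2) = 0$ for every $i \geq 1$; in particular the case $i = 1$ stated in the lemma.

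The argument is essentially a straightforward application of flatness, so no real mathematical obstacle is present. The only delicate point is bookkeeping with the tensor products: one must explicitly verify that $F_i \otimes_{\field} R_{(2)}$ really is a free $R$-module, and that the identification of $(F_i \otimes_{\field} R_{(2)}) \otimes_R (R_{(1)} \otimes_{\field} (R_{(2)}/J_{(2)}))$ with $F_i \otimes_{\field} (R_{(2)}/J_{(2)})$ is natural in $i$, so that the differentials transport correctly. Once these identifications are in place, the conclusion follows immediately.
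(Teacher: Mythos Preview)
Your proof is correct. The paper takes a different route: it cites a K\"unneth-type decomposition from \cite[\S 2.3]{Jorgensen}, which expresses $\mathrm{Tor}^R_1$ of a tensor product of modules over $R_{(1)} \otimes_{\field} R_{(2)}$ as a direct sum of tensor products of $\mathrm{Tor}$ groups computed over $R_{(1)}$ and $R_{(2)}$ separately; since in each summand one of the factors is $\mathrm{Tor}^{R_{(i)}}_1(-,R_{(i)})$ against the free module $R_{(i)}$, everything vanishes. Your approach is more elementary and self-contained: you are essentially reproving the relevant special case of the K\"unneth formula by hand, exploiting that one of the two ``factors'' (namely $R_{(2)}$ on the first side, $R_{(1)}$ on the second) is free. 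This avoids the external reference and even yields the slightly stronger statement $\mathrm{Tor}^R_i = 0$ for all $i \geq 1$. The paper's citation is terser and points the reader to the general framework, but your direct argument is arguably preferable in a setting where the result is this simple.
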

\begin{proof}
Note first that (as $R_{(1)}$ and ${R_{(2)}}$ are flat as $\field$-modules, since $\field$ is a field) \[\frac{R}{{J_{(1)}} \otimes_{\field} {R_{(2)}}} \cong \frac{R_{(1)}}{{J_{(1)}}} \otimes_{\field} {R_{(2)}}\text{ and } \frac{R}{R_{(1)} \otimes_{\field} {J_{(2)}}} \cong R_{(1)} \otimes_{\field} \frac{{R_{(2)}}}{{J_{(2)}}}.\] It then follows from \cite[\S 2.3]{Jorgensen} that
\[
\begin{split}
    &\mathrm{Tor}^{R}_1\left(\frac{R}{{J_{(1)}} \otimes_{\field} {R_{(2)}}}, \ \frac{R}{R_{(1)} \otimes_{\field} {J_{(2)}}}\right)\\ \cong &\left(\mathrm{Tor}^{R_{(1)}}_0\left(\frac{R_{(1)}}{{J_{(1)}}}, R_{(1)}\right) \otimes_{\field} \mathrm{Tor}^{{R_{(2)}}}_1\left({R_{(2)}}, \frac{{R_{(2)}}}{{J_{(2)}}}\right)\right) \oplus \left(\mathrm{Tor}^{R_{(1)}}_1\left(\frac{R_{(1)}}{{J_{(1)}}}, R_{(1)}\right) \otimes_{\field} \mathrm{Tor}^{{R_{(2)}}}_0\left({R_{(2)}}, \frac{{R_{(2)}}}{{J_{(2)}}}\right)\right) = 0.
\end{split}
\]
\end{proof}

\begin{lemma} \label{lem:colon equality}
Let $R_{(1)}$ and ${R_{(2)}}$ be standard graded $\field$-algebras, ${J_{(1)}} \subseteq R_{(1)}$ and ${J_{(2)}} \subseteq {R_{(2)}}$ be ideals and $f \in R_{(1)}$. Let $R = R_{(1)} \otimes_{\field}{R_{(2)}}$ and $J = {J_{(1)}} \otimes_{\field} {R_{(2)}} + R_{(1)} \otimes_{\field} {J_{(2)}}$. One has that
\[J :_R (f \otimes 1) = ({J_{(1)}} :_{R_{(1)}} f) \otimes {R_{(2)}} + R_{(1)} \otimes {J_{(2)}}.\]
\end{lemma}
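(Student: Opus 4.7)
The plan is to translate the colon computation into a statement about annihilators in the quotient ring $\bar{R} := R/J$ and then exploit the fact that every module over the field $\field$ is flat.

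First, I would verify the inclusion $\supseteq$ by a direct pure-tensor computation: for $g \in J_{(1)} :_{R_{(1)}} f$ and any $r \in R_{(2)}$, the element $(g \otimes r)(f \otimes 1) = gf \otimes r$ lies in $J_{(1)} \otimes_{\field} R_{(2)} \subseteq J$, and elements of $R_{(1)} \otimes_{\field} J_{(2)}$ are sent into $R_{(1)} \otimes_{\field} J_{(2)} \subseteq J$ by multiplication by $f \otimes 1$. Extending by $\field$-linearity settles this direction and reduces the whole problem to the reverse inclusion.

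For $\subseteq$, the plan is to identify $\bar{R}$ with $\bar{R}_{(1)} \otimes_{\field} \bar{R}_{(2)}$, where $\bar{R}_{(i)} := R_{(i)}/J_{(i)}$; this follows either from right-exactness of the tensor product applied twice to the surjections $R_{(i)} \twoheadrightarrow \bar{R}_{(i)}$, or directly from \Cref{lem:vanishing Tor_1}. Under this identification, the quotient $(J :_R (f \otimes 1))/J$ coincides with $\Ann_{\bar{R}}(\bar{f} \otimes 1)$, and the image in $\bar{R}$ of the putative right-hand side $K_{(1)} \otimes_{\field} R_{(2)} + R_{(1)} \otimes_{\field} J_{(2)}$, where $K_{(1)} := J_{(1)} :_{R_{(1)}} f$, is precisely $(K_{(1)}/J_{(1)}) \otimes_{\field} \bar{R}_{(2)}$ (note that $J_{(1)} \subseteq K_{(1)}$, so the inclusion $K_{(1)} \hookrightarrow R_{(1)}$ descends to an inclusion $K_{(1)}/J_{(1)} \hookrightarrow \bar{R}_{(1)}$).

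The core of the argument is then to observe that the very definition of $K_{(1)}$ yields an exact sequence of $R_{(1)}$-modules
\[0 \to K_{(1)}/J_{(1)} \to \bar{R}_{(1)} \xrightarrow{\,\cdot\, \bar{f}\,} \bar{R}_{(1)}.\]
Since $\field$ is a field, $\bar{R}_{(2)}$ is automatically $\field$-flat, so tensoring with $\bar{R}_{(2)}$ preserves exactness and yields
\[0 \to (K_{(1)}/J_{(1)}) \otimes_{\field} \bar{R}_{(2)} \to \bar{R} \xrightarrow{\,\cdot\, (\bar{f}\, \otimes\, 1)\,} \bar{R},\]
which identifies $(K_{(1)}/J_{(1)}) \otimes_{\field} \bar{R}_{(2)}$ with $\Ann_{\bar{R}}(\bar{f} \otimes 1)$ and closes the proof. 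The main obstacle is really only the bookkeeping of the various isomorphisms between subquotients of $R$ in the previous paragraph; once those are in place, flatness over the field trivializes the heart of the argument.
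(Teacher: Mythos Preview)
Your proof is correct and in fact slightly more streamlined than the paper's. Both arguments rest on the same idea---flatness over the field $\field$ transports the kernel of multiplication by $f$ to the kernel of multiplication by $f\otimes 1$---but you organize the bookkeeping differently. The paper works ``upstairs'' in $R$: it starts from the short exact sequence $0 \to R_{(1)}/(J_{(1)}:_{R_{(1)}}f) \xrightarrow{\cdot f} R_{(1)}/J_{(1)} \to R_{(1)}/(J_{(1)}+(f)) \to 0$, tensors over $\field$ with $R_{(2)}$, then tensors \emph{over $R$} with $R/(R_{(1)}\otimes J_{(2)})$, invoking \Cref{lem:vanishing Tor_1} to keep the sequence exact, and finally compares with the tautological colon sequence via the Five Lemma. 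You instead pass to the quotient $\bar R \cong \bar R_{(1)}\otimes_\field \bar R_{(2)}$ at the outset and tensor only once, over $\field$, with the two-term sequence $0 \to K_{(1)}/J_{(1)} \to \bar R_{(1)} \xrightarrow{\cdot\bar f} \bar R_{(1)}$. This sidesteps both the explicit $\mathrm{Tor}_1$ vanishing (the isomorphism $R/J\cong \bar R_{(1)}\otimes_\field \bar R_{(2)}$ needs only right-exactness, as you note) and the Five Lemma. The paper's route makes the role of \Cref{lem:vanishing Tor_1} visible, but yours gets to the point faster.
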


\begin{proof}
Note first that $J :_R (f \otimes 1) \supseteq ({J_{(1)}} :_{R_{(1)}} f) \otimes {R_{(2)}} + R_{(1)} \otimes {J_{(2)}}$, and thus there is a canonical surjection
\begin{equation} \label{eq:surjection for Five Lemma}
    \frac{R}{({J_{(1)}} :_{R_{(1)}} f) \otimes {R_{(2)}} + R_{(1)} \otimes {J_{(2)}}} \twoheadrightarrow \frac{R}{J :_R (f \otimes 1)}.
\end{equation}
Consider the exact sequence of $R_{(1)}$-modules
\[0 \to \frac{R_{(1)}}{{J_{(1)}} \colon_{R_{(1)}} f} \xrightarrow{\cdot f} \frac{R_{(1)}}{{J_{(1)}}} \to \frac{R_{(1)}}{{J_{(1)}}+(f)}\to 0.\]
Tensoring over $\field$ with the (flat) $\field$-vector space ${R_{(2)}}$ we get the following short exact sequence of $R$-modules:
\begin{equation} \label{eq:tensored with R_2}
0 \to \frac{R}{({J_{(1)}} \colon_{R_{(1)}} f) \otimes_{\field} {R_{(2)}}} \xrightarrow{\cdot f \otimes 1} \frac{R}{{J_{(1)}} \otimes {R_{(2)}}} \to \frac{R}{{J_{(1)}} \otimes {R_{(2)}} + (f) \otimes R_{(2)}}\to 0,
\end{equation}
where we used that $(J_{(1)} + (f)) \otimes R_{(2)} = J_{(1)} \otimes R_{(2)} + (f) \otimes R_{(2)}$. 
By \Cref{lem:vanishing Tor_1} we have that 
\[
\mathrm{Tor}^{R}_1\left(\frac{R}{({J_{(1)}}+(f)) \otimes {R_{(2)}}},\ \frac{R}{R_{(1)} \otimes {J_{(2)}}}\right)=0
\]
and thus, tensoring \eqref{eq:tensored with R_2} with the $R$-module $R/(R_{(1)} \otimes {J_{(2)}})$, we obtain the short exact sequence 
\[0 \to \frac{R}{({J_{(1)}} \colon_{R_{(1)}} f) \otimes_{\field} {R_{(2)}} + R_{(1)} \otimes {J_{(2)}}} \xrightarrow{\cdot f \otimes 1} \frac{R}{J} \to \frac{R}{J + ((f) \otimes {R_{(2)}})}\to 0.\]
Comparing with the exact sequence \[0 \to \frac{R}{J:_R(f \otimes 1)} \xrightarrow{\cdot f \otimes 1} \frac{R}{J} \to \frac{R}{J + ((f) \otimes {R_{(2)}})} \to 0\]
yields the desired result (recalling the surjection \eqref{eq:surjection for Five Lemma} and applying the Five Lemma).
\end{proof}

\begin{lemma} \label{lem:fiber product lemma}
    Let $R_{(1)}$ and ${R_{(2)}}$ be standard graded $\field$-algebras, ${J_{(1)}} \subseteq \mathfrak{m}_{(1)}$ and ${J_{(2)}} \subseteq {\mathfrak{m}_{(2)}}$ be proper homogeneous ideals, and $f \in \mathfrak{m}_{(1)}$. Moreover, let $R = R_{(1)} \circ {R_{(2)}}$ be the fiber product of $R_{(1)}$ and $R_{(2)}$ over $\field$. One has that
\[(J_{(1)}^{\mathrm e} + J_{(2)}^{\mathrm e}) :_R \iota_1(f) = (J_{(1)} :_{R_{(1)}} f)^{\mathrm e} + (\mathfrak{m}_{(2)})^{\mathrm e}.\]
\end{lemma}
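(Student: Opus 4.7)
The plan is to unfold both sides explicitly via the pair description of the fiber product $R = R_{(1)} \circ R_{(2)}$. Since $J_{(1)} \subseteq \mathfrak{m}_{(1)}$ is a proper homogeneous ideal, every generator $g$ of $J_{(1)}$ satisfies $\iota_1(g) = (g, 0)$; and since any $r_1 \in R_{(1)}$ appears as the first coordinate of some element of $R$ (take $r_2 := \pi_1(r_1) \in \field \subseteq R_{(2)}$), multiplying such generators by arbitrary elements of $R$ produces
\[J_{(1)}^{\mathrm e} = \{(h, 0) : h \in J_{(1)}\}, \quad J_{(2)}^{\mathrm e} = \{(0, k) : k \in J_{(2)}\}, \quad (\mathfrak{m}_{(2)})^{\mathrm e} = \{(0, k) : k \in \mathfrak{m}_{(2)}\},\]
and hence $J_{(1)}^{\mathrm e} + J_{(2)}^{\mathrm e} = \{(h, k) \in R : h \in J_{(1)}, \ k \in J_{(2)}\}$.

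Next, since $f \in \mathfrak{m}_{(1)}$ one has $\iota_1(f) = (f, 0)$. For any $(r_1, r_2) \in R$, the product is $(r_1 f, 0)$, which sits in $J_{(1)}^{\mathrm e} + J_{(2)}^{\mathrm e}$ precisely when $r_1 f \in J_{(1)}$, i.e., when $r_1 \in J_{(1)} :_{R_{(1)}} f$. Consequently
\[(J_{(1)}^{\mathrm e} + J_{(2)}^{\mathrm e}) :_R \iota_1(f) = \{(r_1, r_2) \in R : r_1 \in J_{(1)} :_{R_{(1)}} f\}.\]
I would then split on whether $f \in J_{(1)}$. If so, $J_{(1)} :_{R_{(1)}} f = R_{(1)}$ and both sides collapse to $R$: the LHS because $\iota_1(f) \in J_{(1)}^{\mathrm e}$, the RHS because $\iota_1(1) = 1_R$ belongs to $(J_{(1)} :_{R_{(1)}} f)^{\mathrm e}$.

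If instead $f \notin J_{(1)}$, I would first verify the containment $J_{(1)} :_{R_{(1)}} f \subseteq \mathfrak{m}_{(1)}$: any $r = \lambda + r'$ in the colon with scalar part $\lambda \neq 0$ would, by comparing homogeneous components of $rf \in J_{(1)}$ degree by degree and using $\lambda \neq 0$, force every graded piece of $f$ into $J_{(1)}$, contradicting $f \notin J_{(1)}$. Applying the first-paragraph description to the ideal $J_{(1)} :_{R_{(1)}} f \subseteq \mathfrak{m}_{(1)}$ then gives $(J_{(1)} :_{R_{(1)}} f)^{\mathrm e} = \{(h, 0) : h \in J_{(1)} :_{R_{(1)}} f\}$, so the RHS equals $\{(h, k) \in R : h \in J_{(1)} :_{R_{(1)}} f, \ k \in \mathfrak{m}_{(2)}\}$. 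On the LHS side, $r_1 \in J_{(1)} :_{R_{(1)}} f \subseteq \mathfrak{m}_{(1)}$ forces $\pi_1(r_1) = 0 = \pi_2(r_2)$, whence $r_2 \in \mathfrak{m}_{(2)}$, and the LHS reduces to the same set. The bulk of the argument is a careful unpacking of definitions; I expect the only mildly delicate point to be the homogeneity check that the colon ideal remains proper when $f \notin J_{(1)}$.
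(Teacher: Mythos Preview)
Your proof is correct and follows the same basic strategy as the paper's---unpacking the pair description of the fiber product---but with one avoidable detour. You split on whether $f \in J_{(1)}$ because your description of $(J_{(1)} :_{R_{(1)}} f)^{\mathrm e}$ as $\{(h,0) : h \in J_{(1)} :_{R_{(1)}} f\}$ only holds when the colon ideal sits inside $\mathfrak{m}_{(1)}$. The paper sidesteps this: for the inclusion $\subseteq$, it takes $(h,g)$ in the left-hand side, writes $h = \lambda + h'$ and $g = \lambda + g'$ with $\lambda \in \field$, and decomposes $(h,g) = (h,\lambda) + (0,g')$. Since $(h,\lambda) = \iota_1(h)$ by the very definition of $\iota_1$, and one checks directly that $h \in J_{(1)} :_{R_{(1)}} f$, the element $(h,\lambda)$ lies in $(J_{(1)} :_{R_{(1)}} f)^{\mathrm e}$ regardless of whether $\lambda$ vanishes. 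This single observation removes both the case distinction and the degree-by-degree homogeneity argument you use to show the colon is proper when $f \notin J_{(1)}$.
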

\begin{proof}
        Since $f \in \mathfrak{m}_{(1)}$, one has that $\iota_1(f) = (f,0)$. Analogously, $\iota_1(p) = (p,0)$ for every $p \in J_{(1)}$ and $\iota_2(q) = (0,q)$ for every $q \in J_{(2)}$, since both $J_{(1)}$ and $J_{(2)}$ are proper homogeneous ideals. In particular, one has that $p \in J_{(1)}$ if and only if $(p,0) \in J_{(1)}^{\mathrm e}$. Let us first prove that \[(J_{(1)}^{\mathrm e} + J_{(2)}^{\mathrm e}) :_R (f,0) \supseteq (J_{(1)} :_{R_{(1)}} f)^{\mathrm e} + (\mathfrak{m}_{(2)})^{\mathrm e}.\]
        
        \noindent If ${\mathfrak{m}_{(2)}} = (y_1, \ldots, y_m)$, then $(\mathfrak{m}_{(2)})^{\mathrm e} = ((0,y_1),\ldots, (0,y_m))$; since $(f,0) \cdot (0,y_j) = (0,0)$, one has that $(0,y_j) \in (0,0) :_R (f,0) \subseteq (J_{(1)}^{\mathrm e} + J_{(2)}^{\mathrm e}) :_R (f,0)$.
        
        \noindent If $J_{(1)} :_R f = (h_1, \ldots, h_s)$ then, after writing $h_i = \lambda_i + h'_i$ with $\lambda_i \in \field$ and $h'_i \in \mathfrak{m}_{(1)}$, one has that $(J_{(1)} :_R f)^{\mathrm e} = ((h_1, \lambda_1), \ldots, (h_s, \lambda_s))$. Since $(h_i, \lambda_i) \cdot (f,0) = (h_if,0)$ and $h_if \in J_{(1)}$, one has that $(h_if,0) \in J_{(1)}^{\mathrm e}$ and thus $(h_i, \lambda_i) \in J_{(1)}^{\mathrm e} :_R (f,0)$.
    
        Let us now show that \[(J_{(1)}^{\mathrm e} + J_{(2)}^{\mathrm e}) :_R (f,0) \subseteq (J_{(1)} :_{R_{(1)}} f)^{\mathrm e} + (\mathfrak{m}_{(2)})^{\mathrm e}.\]
        
        \noindent Let $(h,g) \in (J_{(1)}^{\mathrm e} + J_{(2)}^{\mathrm e}) :_R (f,0)$, and write $h = \lambda + h'$, $g = \lambda + g'$ with $\lambda \in \field$, $h' \in \mathfrak{m}_{(1)}, g' \in {\mathfrak{m}_{(2)}}$. Then $(h,g) = (h,\lambda) + (0,g')$ and, since $(0,g') \in (\mathfrak{m}_{(2)})^{\mathrm e} \cap ((0,0) :_R (f,0))$, it is now enough to prove that $(h,\lambda) \in (J_{(1)} :_{R_{(1)}} f)^{\mathrm e}$. Since $(h,\lambda) \in (J_{(1)}^{\mathrm e} + J_{(2)}^{\mathrm e}) :_R (f,0)$, one has that $(hf,0) = (h,\lambda)\cdot(f,0) \in J_{(1)}^{\mathrm e} + J_{(2)}^{\mathrm e}$, but this implies that $(hf,0) \in J_{(1)}^{\mathrm e}$ and thus that $hf \in J_{(1)}$, i.e., $h \in J_{(1)} :_{R_{(1)}} f$. 
\end{proof}

We are now able to discuss the behavior of strong Koszulness under taking tensor or fiber products. The following result was proved by Herzog, Hibi and Restuccia under the additional hypothesis that the rings in play are homogeneous toric rings, see \cite[Proposition 2.3(b)]{HHR}:

\begin{theorem} \label{thm:strongly Koszul tensor and fiber products}
    Let $R_{(1)}$ and ${R_{(2)}}$ be standard graded $\field$-algebras and let $R$ be either the tensor product $R_{(1)} \otimes_{\field} {R_{(2)}}$ or the fiber product $R_{(1)} \circ {R_{(2)}}$. Let $B_1$ be a $\field$-basis of $(R_{(1)})_1$, $B_2$ a $\field$-basis of $(R_{(2)})_1$, and $B$ the $\field$-basis of $R_1$ defined via $B = \iota_1(B_1) \cup \iota_2(B_2)$. Then $R$ is strongly Koszul with respect to $B$ if and only if each $R_{(i)}$ is strongly Koszul with respect to $B_i$.
\end{theorem}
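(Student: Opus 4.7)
The plan is to deduce both directions of the equivalence from \Cref{lem:colon equality} (in the tensor case) and \Cref{lem:fiber product lemma} (in the fiber case), each of which expresses a colon of the form $J :_R \iota_1(f)$ purely in terms of the factor-wise colon $J_{(1)} :_{R_{(1)}} f$, together with an easily controlled contribution from the second factor. In both directions, the second-factor contribution is automatically generated by elements of $\iota_2(B_2)$, so the real work sits inside $R_{(1)}$.

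For the $(\Leftarrow)$ implication, assume each $R_{(i)}$ is strongly Koszul with respect to $B_i$. Take $C \subsetneq B$ and $b \in B \setminus C$, write $C = \iota_1(C_1) \cup \iota_2(C_2)$ with $C_i \subseteq B_i$, and (exchanging the two factors if necessary) assume $b = \iota_1(b')$ for some $b' \in B_1 \setminus C_1$; in particular $C_1 \subsetneq B_1$, so strong Koszulness of $R_{(1)}$ yields a subset $E_1 \subseteq B_1$ generating $(C_1) :_{R_{(1)}} b'$. Applying \Cref{lem:colon equality} (with $J_{(1)} = (C_1)$, $J_{(2)} = (C_2)$, $f = b'$) in the tensor case gives
\[(C)_R :_R \iota_1(b') = \bigl((C_1) :_{R_{(1)}} b'\bigr) \otimes_{\field} R_{(2)} + R_{(1)} \otimes_{\field} (C_2),\]
which is generated by $\iota_1(E_1) \cup \iota_2(C_2) \subseteq B$; applying \Cref{lem:fiber product lemma} in the fiber case (its hypotheses hold since $(C_i) \subseteq \mathfrak{m}_{(i)}$ is a proper homogeneous ideal) gives
\[(C)_R :_R \iota_1(b') = \bigl((C_1) :_{R_{(1)}} b'\bigr)^{\mathrm{e}} + (\mathfrak{m}_{(2)})^{\mathrm{e}},\]
which is generated by $\iota_1(E_1) \cup \iota_2(B_2) \subseteq B$.

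For the $(\Rightarrow)$ implication, assume $R$ is strongly Koszul with respect to $B$, and let $A \subsetneq B_1$ and $b \in B_1 \setminus A$. I would apply the same two lemmas with $J_{(1)} = (A)$, $J_{(2)} = 0$ and $f = b$, so that $(\iota_1(A))_R :_R \iota_1(b)$ equals $\bigl((A) :_{R_{(1)}} b\bigr) \otimes_{\field} R_{(2)}$ in the tensor case and $\bigl((A) :_{R_{(1)}} b\bigr)^{\mathrm{e}} + (\mathfrak{m}_{(2)})^{\mathrm{e}}$ in the fiber case. Strong Koszulness of $R$ provides a subset $E = \iota_1(E_1) \cup \iota_2(E_2) \subseteq B$ generating this colon; I would then apply the natural surjection $\pi\colon R \twoheadrightarrow R_{(1)}$ --- induced by the augmentation $R_{(2)} \twoheadrightarrow \field$ in the tensor case and by $(r_1, r_2) \mapsto r_1$ in the fiber case --- which satisfies $\pi \circ \iota_1 = \mathrm{id}_{R_{(1)}}$ and vanishes on $\iota_2(\mathfrak{m}_{(2)})$. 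Since $\pi$ takes the displayed colon to $(A) :_{R_{(1)}} b$ (using $\pi(I \otimes_{\field} R_{(2)}) = I$, $\pi(I^{\mathrm{e}}) = I$, and $\pi((\mathfrak{m}_{(2)})^{\mathrm{e}}) = 0$ for any ideal $I \subseteq R_{(1)}$), one concludes that $(A) :_{R_{(1)}} b = (\pi(E)) = (E_1)$ with $E_1 \subseteq B_1$. The analogous statement for $R_{(2)}$ follows by symmetry.

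The step I expect to need the most care is the transfer via $\pi$ in the $(\Rightarrow)$ direction: one must verify that $\pi$ sends a generating set of the $R$-colon to a generating set of the $R_{(1)}$-colon, which relies on $\pi$ being a surjective ring homomorphism together with the explicit ideal identities just noted. Otherwise only minor bookkeeping remains --- such as ensuring that the degenerate cases $A = \emptyset$ or $C_i = \emptyset$ are permitted by the Conca--De Negri--Rossi definition (they are) and that the hypotheses of \Cref{lem:colon equality} and \Cref{lem:fiber product lemma} apply in every subcase.
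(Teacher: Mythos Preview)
Your proof is correct. The ``if'' direction matches the paper's argument exactly, applying \Cref{lem:colon equality} and \Cref{lem:fiber product lemma} to reduce the colon over $R$ to a factor-wise colon over $R_{(1)}$.

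For the ``only if'' direction you take a genuinely different route from the paper. The paper invokes \Cref{lem:SK and quotients} --- the general principle that strong Koszulness with respect to $B$ descends to any quotient by a subset of $B$ --- and simply observes that $R_{(i)} \cong R/(\mathfrak{m}_{(3-i)})^{\mathrm e}$. You instead reuse \Cref{lem:colon equality} and \Cref{lem:fiber product lemma} a second time (with $J_{(2)} = 0$) to compute $(\iota_1(A))_R :_R \iota_1(b)$ explicitly, and then push the resulting generating set through the retraction $\pi\colon R \twoheadrightarrow R_{(1)}$. Your approach is self-contained (it needs no extra lemma), and the projection step you flagged is indeed harmless: $\pi$ is a surjective ring homomorphism with $\pi\circ\iota_1 = \mathrm{id}$ and $\pi(\iota_2(\mathfrak m_{(2)})) = 0$, so the image of any generating set generates the image ideal. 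The paper's route, on the other hand, isolates a reusable statement (\Cref{lem:SK and quotients}) and applies it in one line, at the cost of proving that lemma separately; underneath, both arguments are computing the same quotient map.
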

\begin{proof}
Let us first note that the ``only if'' part of the claim is a direct consequence of \Cref{lem:SK and quotients}, since \[R_{(1)} \cong \frac{R}{(\mathfrak{m}_{(2)})^{\mathrm{e}}} \text{ and } {R_{(2)}} \cong \frac{R}{(\mathfrak{m}_{(1)})^{\mathrm{e}}}.\] 
Let us now turn to the ``if'' part.
\begin{itemize}
    \item Let $R = R_{(1)} \otimes_{\field} {R_{(2)}}$, where each ${R_{(i)}}$ is strongly Koszul with respect to $B_i$. We need to prove that $R$ is strongly Koszul with respect to \[B = \{g \otimes 1 \mid g \in B_1\} \cup \{1 \otimes h \mid h \in B_2\}.\]
    
    \noindent Let $A_1 \subsetneq B_1$, $A_2 \subseteq B_2$ and $f \in B_1 \setminus A_1$. Moreover, for every $i \in \{1,2\}$, let $J_{(i)}$ be the ${R_{(i)}}$-ideal generated by $A_i$ and let $J$ be the $R$-ideal obtained as ${J_{(1)}} \otimes_{\field} {R_{(2)}} + R_{(1)} \otimes_{\field} {J_{(2)}}$ (in other words, $J = (J_{(1)})^{\mathrm e} + (J_{(2)})^{\mathrm e}$). Without loss of generality, to prove the claim it is enough to show that $J :_R (f \otimes 1)$ is generated by elements of $B$. By \Cref{lem:colon equality}, one has that \[J :_R (f \otimes 1) = ({J_{(1)}} :_{R_{(1)}} f) \otimes {R_{(2)}} + R_{(1)} \otimes {J_{(2)}}.\] Since $R_{(1)}$ is strongly Koszul with respect to $B_1$, one has that ${J_{(1)}} :_{R_{(1)}} f$ is generated by elements of $B_1$; hence, $J :_R (f \otimes 1)$ is generated by elements of $B$, as claimed.
    
    \item Let $R = R_{(1)} \circ {R_{(2)}}$, where each ${R_{(i)}}$ is strongly Koszul with respect to $B_i$. 
    We want to prove that $R$ is strongly Koszul with respect to \[B = \{(g,0) \mid g \in B_1\} \cup \{(0,h) \mid h \in B_2\}.\]
    
    \noindent Let $A_1 \subsetneq B_1$, $A_2 \subseteq B_2$ and $b \in B_1 \setminus A_1$. Moreover, for every $i \in \{1,2\}$, let $\mathcal{A}_i$ be the proper homogeneous ${R_{(i)}}$-ideal generated by $A_i$. Without loss of generality, to prove the claim it is enough to show that $(\mathcal{A}_1^{\mathrm e} + \mathcal{A}_2^{\mathrm e}) :_R (b,0)$ is generated by elements of $B$. By \Cref{lem:fiber product lemma} it holds that \[(\mathcal{A}_1^{\mathrm e} + \mathcal{A}_2^{\mathrm e}) :_R (b,0) = (\mathcal{A}_1 :_{R_{(1)}} b)^{\mathrm e} + (\mathfrak{m}_{(2)})^{\mathrm e},\] from which the desired result follows (since $R_{(1)}$ is strongly Koszul with respect to $B_1$).
\end{itemize}
\end{proof}

\section{Tidy revlex-universal Gr\"obner bases of quadrics} \label{sec:tidy}
The aim of this section is to find a sufficient condition for strong Koszulness (\Cref{thm:main theorem}). We begin by introducing some notation that we will use throughout the paper to improve readability.

\begin{notation}
Let $S = \field[x_1, \ldots, x_n]$ be a polynomial ring and let $X = \{x_1, \ldots, x_n\}$ be the set of variables of $S$. If $Y$ is a subset of $X$, we will use $Y$ to denote also the ideal of $S$ generated by such variables (when no risk of confusing the ideal with the set arises). Moreover, if $I$ is a homogeneous ideal of $S$, we will sometimes write $\overline{Y}$ to denote the ideal $(\overline{y} \mid y \in Y)$ in the standard graded algebra $R = S/I$. 
\end{notation}

Next, we recall some useful properties of the degree reverse lexicographic (from now on, \emph{revlex}) order.

\begin{remark} \label{lem:revlex and projection}
    Let $I \subseteq S = \field[x_1, \ldots, x_n]$ be a homogeneous ideal and let $\rev$ be the revlex order on $S$ such that $x_1 > x_2 > \ldots > x_n$. Let $i \in \{1, \ldots, n\}$, let $S' = \field[x_1, \ldots, x_i]$, let $\pi_i$ be the natural projection $S \twoheadrightarrow S'$ and let $<'_{\mathrm{rev}}$ be the revlex order on $S'$ such that $x_1 > x_2 > \ldots > x_i$. Then:
    \begin{enumerate}
        \item for every nonzero homogeneous polynomial $f \in S$, one has that $f \in \ker(\pi_i)$ if and only if $\init_{<_{\mathrm{rev}}}(f) \in \ker(\pi_i)$; moreover, when $f \notin \ker(\pi_i)$, one has that $\pi_i(\init_{<_{\mathrm{rev}}}(f)) = \init_{<'_{\mathrm{rev}}}(\pi_i(f))$.
        \item if $I$ is a homogeneous ideal of $S$, one has that $\pi_i(\init_{<_{\mathrm{rev}}}(I)) = \init_{<'_{\mathrm{rev}}}(\pi_i(I))$.

    \end{enumerate}
\end{remark}

\begin{lemma}[{\cite[Lemma 4.3.7]{MonomialIdeals}}] \label{lem:revlex}
    Let $I \subseteq S = \field[x_1, \ldots, x_n]$ be a homogeneous ideal and let $\rev$ be the revlex order on $S$ such that $x_1 > x_2 > \ldots > x_n$. Then, for every $i \in [n]$, one has that:
    \begin{enumerate}
    \item $\init_{\rev}(I + (x_i, \ldots, x_n)) = \init_{\rev}(I) + (x_i, \ldots, x_n)$;
    \item $\init_{\rev}((I + (x_{i+1}, \ldots, x_n)) :_S x_i) = (\init_{\rev}(I) + (x_{i+1}, \ldots, x_n)) :_S x_i$.
    \end{enumerate}
\end{lemma}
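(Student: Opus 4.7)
The plan is to reduce everything to a single comparison specific to revlex: if $u, v$ are monomials of the same degree with $u \in \field[x_1, \ldots, x_{i-1}]$ and $v \in (x_i, \ldots, x_n)$, then the last nonzero entry of the exponent vector of $u/v$ occurs at some position $\ge i$ and is negative, so $u >_{\mathrm{rev}} v$. This single observation drives both parts.

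For (i), the inclusion $\supseteq$ is immediate. For $\subseteq$, take a homogeneous $f \in I + (x_i, \ldots, x_n)$ and write $f = g + h$ with $g \in I$ and $h \in (x_i, \ldots, x_n)$. If $\init_{\rev}(f) \in (x_i, \ldots, x_n)$ we are done, so assume otherwise. Since every monomial of $h$ lies in $(x_i, \ldots, x_n)$, the monomial $\init_{\rev}(f)$ does not appear in $h$ and must therefore appear in $g$ with the same coefficient, giving $\init_{\rev}(f) \leq_{\mathrm{rev}} \init_{\rev}(g)$. The comparison above rules out $\init_{\rev}(g) \in (x_i, \ldots, x_n)$ (it would force $\init_{\rev}(f) >_{\mathrm{rev}} \init_{\rev}(g)$), which in turn implies, by the same coefficient-tracking trick, that $\init_{\rev}(g) \leq_{\mathrm{rev}} \init_{\rev}(f)$. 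Hence $\init_{\rev}(f) = \init_{\rev}(g) \in \init_{\rev}(I)$.

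For (ii), set $J = I + (x_{i+1}, \ldots, x_n)$. The inclusion $\subseteq$ follows from (i) applied to $x_i f \in J$, as this forces $x_i \init_{\rev}(f) = \init_{\rev}(x_i f) \in \init_{\rev}(I) + (x_{i+1}, \ldots, x_n)$. For $\supseteq$ it suffices to treat a monomial $m \in (\init_{\rev}(I) + (x_{i+1}, \ldots, x_n)) :_S x_i$: if $m \in (x_{i+1}, \ldots, x_n)$ then $m$ already lies in $J :_S x_i$; otherwise $m x_i \in \init_{\rev}(I)$, so pick $g \in I$ with $\init_{\rev}(g) = m x_i$. The key structural step is to observe that every monomial of $g$ lies in $(x_i, \ldots, x_n)$, because any monomial of $g$ inside $\field[x_1, \ldots, x_{i-1}]$ would, by the revlex comparison, exceed $m x_i$ and contradict the choice of $g$. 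Hence we may split $g = x_i g_1 + g_2$, with $g_2 \in (x_{i+1}, \ldots, x_n)$ collecting the monomials of $g$ not divisible by $x_i$; then $x_i g_1 = g - g_2 \in J$, and the compatibility of revlex with multiplication yields $\init_{\rev}(g_1) = m x_i / x_i = m$, exhibiting $m$ as an element of $\init_{\rev}(J :_S x_i)$.

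I expect the main obstacle to be precisely this decomposition of $g$ in part (ii): it is the only point where the specific properties of revlex are genuinely needed, and a generic monomial order would allow no such control. All other steps are bookkeeping once the revlex comparison stated at the top is in hand.
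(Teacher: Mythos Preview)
The paper does not supply its own proof of this lemma: it is stated with a citation to \cite[Lemma 4.3.7]{MonomialIdeals} and used as a black box. Your argument is correct and is essentially the standard textbook proof (the one you would find in Herzog--Hibi). The key revlex-specific observation you isolate at the start---that a monomial in $\field[x_1,\ldots,x_{i-1}]$ dominates any monomial of the same degree in $(x_i,\ldots,x_n)$---is precisely the mechanism that makes both parts work, and your decomposition $g = x_i g_1 + g_2$ in part~(ii) is the usual way to exploit it. One tiny point of phrasing: when you write ``then $m$ already lies in $J :_S x_i$'' in the case $m \in (x_{i+1},\ldots,x_n)$, you mean to conclude $m \in \init_{\rev}(J :_S x_i)$, which follows since $m$ is a monomial; this is harmless but worth making explicit.
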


\begin{proposition}[{\cite[Theorem 1.1]{EHH}}] \label{prop:ehh}
    Let $I \subseteq S = \field[x_1, \ldots, x_n]$ be a homogeneous ideal, let $R = S/I$ and let $\rev$ be the revlex order on $S$ such that $x_1 > x_2 > \ldots > x_n$. Assume that $I$ admits a Gr\"obner basis of quadrics with respect to $\rev$. Then, for every $i \in [n]$, the colon ideal $(I + (x_{i+1}, x_{i+2}, \ldots, x_n)) :_S x_i$ is generated by linear forms modulo $I$. Thus, the colon ideal $(\overline{x}_{i+1}, \overline{x}_{i+2}, \ldots, \overline{x}_n) :_R \overline{x}_i$ is generated by linear forms.
\end{proposition}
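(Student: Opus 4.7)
The plan is to set $J := I + (x_{i+1}, \ldots, x_n)$ and to produce linear forms $\tilde f_1, \ldots, \tilde f_r \in J :_S x_i$ such that the ideal $L := I + (x_{i+1}, \ldots, x_n, \tilde f_1, \ldots, \tilde f_r)$ already exhausts $J :_S x_i$. Using \Cref{lem:revlex}(1), the hypothesis that $\init_\rev(I)$ is generated by quadrics gives $\init_\rev(J) = \init_\rev(I) + (x_{i+1}, \ldots, x_n)$, itself generated by quadratic monomials together with the linear ones $x_{i+1}, \ldots, x_n$. Via \Cref{lem:revlex}(2), $\init_\rev(J :_S x_i) = \init_\rev(J) :_S x_i$, which I would describe by applying the rule $m \mapsto m/\gcd(m, x_i)$ to each minimal generator of $\init_\rev(J)$. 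The resulting minimal generators split into (a) linear monomials, namely $x_{i+1}, \ldots, x_n$ together with an $x_c$ for each quadratic generator $x_c x_i \in \init_\rev(I)$ (allowing $c = i$); and (b) quadratic monomials $x_a x_b \in \init_\rev(I)$ with $a, b < i$. Crucially, the monomials in case (b) already lie in $\init_\rev(J)$, so as ideals $\init_\rev(J :_S x_i)$ equals $\init_\rev(J)$ plus the (degree-$1$) monomials of type (a).

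To lift each new linear generator $x_c$ to an actual linear form $\tilde f_c \in J :_S x_i$, I would pick $f \in I$ with $\init_\rev(f) = x_c x_i$ and pass to the subring $\field[x_1, \ldots, x_i]$ via the projection $\pi_i$. By \Cref{lem:revlex and projection}, $\pi_i(f)$ lies in $\pi_i(I)$ and has leading monomial $x_c x_i$ for the induced revlex order. Inside $\field[x_1, \ldots, x_i]$, however, $x_i$ is now the \emph{smallest} variable, and the well-known revlex feature says that whenever the smallest variable of the ambient polynomial ring divides the initial term of a polynomial, it divides every monomial of that polynomial. Therefore $\pi_i(f) = x_i\, \tilde f_c$ for some linear $\tilde f_c$, and $x_i \tilde f_c = f - (f - \pi_i(f)) \in I + (x_{i+1}, \ldots, x_n) = J$, proving $\tilde f_c \in J :_S x_i$ with $\init_\rev(\tilde f_c) = x_c$.

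To conclude, I would show by $\rev$-induction on the initial term that every homogeneous $g \in J :_S x_i$ lies in $L$. By the preceding analysis, $\init_\rev(g) \in \init_\rev(J) :_S x_i$ is divisible either by a minimal generator of $\init_\rev(J)$---in which case a standard Gr\"obner reduction against the quadratic Gr\"obner basis of $I$ (or against one of the $x_k$ with $k > i$) replaces $g$ by an element of $J :_S x_i$ of strictly smaller initial term while subtracting only elements of $I + (x_{i+1}, \ldots, x_n) \subseteq L$---or by some $x_c$ with $x_c x_i \in \init_\rev(I)$, in which case the same conclusion follows after subtracting an appropriate monomial multiple of $\tilde f_c \in L$. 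Iterating, $g$ is driven to zero, so $g \in L$ and hence $J :_S x_i = L$, which yields the desired description modulo $I$; the second sentence of the proposition then follows at once by passing to $R$.

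The main obstacle I foresee is the lifting step: a direct attempt to factor out $x_i$ inside the full ring $S$ fails because the ``smallest variable divides the whole polynomial'' property of revlex holds only for the smallest variable of the ambient ring, which is $x_n$ (not $x_i$) in $S$. The projection onto $\field[x_1, \ldots, x_i]$ supplied by \Cref{lem:revlex and projection} is precisely the device that makes this revlex machinery applicable, and it is the technical heart of the proof.
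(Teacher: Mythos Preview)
The paper does not supply its own proof of this proposition: it is quoted verbatim from \cite[Theorem 1.1]{EHH}, so there is no in-paper argument to compare against. That said, your proof is correct and is essentially the argument one finds in \cite{EHH}, phrased using the tools the present paper has already set up. The key computation---that $\init_\rev(J:_S x_i)=\init_\rev(J):_S x_i$ differs from $\init_\rev(J)$ only by the linear monomials $x_c$ with $x_cx_i\in\init_\rev(I)$---is exactly what drives the result, and your lifting of each such $x_c$ via the projection $\pi_i$ and the ``smallest variable divides all terms'' property of revlex is the standard device. Once the lifts $\tilde f_c$ are in hand, the inclusion $L\subseteq J:_S x_i$ together with $\init_\rev(J:_S x_i)\subseteq\init_\rev(L)$ forces equality, as you argue. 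One small point worth making explicit: the quadric $f\in I$ with $\init_\rev(f)=x_cx_i$ exists because $\init_\rev(I)$ is generated purely in degree two (hence every degree-two monomial in it is the leading term of some element of the quadratic Gr\"obner basis); you use this implicitly. It may also be worth remarking that your reduction argument in the final paragraph can be replaced by the one-line observation that $L\subseteq J:_S x_i$ and $\init_\rev(L)\supseteq\init_\rev(J:_S x_i)$ together imply $L=J:_S x_i$, avoiding the explicit induction.
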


\begin{notation}
    Throughout the paper, we will use the phrase ``revlex-universal Gr\"obner basis'' to denote a set $\mathcal{G}$ of polynomials forming a Gr\"obner basis with respect to all possible degree reverse lexicographic orders (i.e., $\mathcal{G}$ is a revlex-Gr\"obner basis for any ordering of the variables).
\end{notation}

\begin{corollary} \label{cor:ehh}
Let $I \subseteq S = \field[x_1, \ldots, x_n]$ be a homogeneous ideal, let $R = S/I$ and let $X = \{x_1, \ldots, x_n\}$ be the set of variables of $S$. Assume that $I$ admits a revlex-universal Gr\"obner basis of quadrics. Then, for every $Y \subsetneq X$ and $x \in X \setminus (Y \cup \{x\})$, the colon ideal $(I + Y) :_S x$ is generated by linear forms modulo $I$. Thus, the colon ideal $\overline{Y} :_R \overline{x}$ is generated by linear forms.
\end{corollary}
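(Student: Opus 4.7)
The statement follows almost immediately from \Cref{prop:ehh} by exploiting the flexibility built into the revlex-universal hypothesis, which guarantees that the given Gröbner basis of quadrics remains a revlex Gröbner basis under every reordering of the variables. The plan is therefore to reduce to the setting of \Cref{prop:ehh} by reindexing the variables so that $Y$ consists of the last few variables and $x$ sits immediately before them.

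More precisely, given a proper subset $Y \subsetneq X$ and a variable $x \in X \setminus Y$, I would set $i := n - |Y|$ and choose a bijection $\sigma\colon \{1, \ldots, n\} \to \{1, \ldots, n\}$ such that, writing $z_j := x_{\sigma(j)}$, one has $z_i = x$ and $\{z_{i+1}, \ldots, z_n\} = Y$; the variables $z_1, \ldots, z_{i-1}$ are then just the remaining elements of $X \setminus (Y \cup \{x\})$, in any order. Let $\rev^{\sigma}$ denote the degree reverse lexicographic order on $S$ induced by the total order $z_1 > z_2 > \cdots > z_n$.

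By the revlex-universal hypothesis, $I$ admits a Gröbner basis of quadrics with respect to $\rev^\sigma$. Applying \Cref{prop:ehh} to this Gröbner basis and to the index $i$ chosen above yields that
\[
(I + (z_{i+1}, \ldots, z_n)) :_S z_i
\]
is generated by linear forms modulo $I$. Since by construction $(z_{i+1}, \ldots, z_n) = Y$ (as ideals of $S$) and $z_i = x$, this is exactly the statement that $(I+Y) :_S x$ is generated by linear forms modulo $I$. Passing to the quotient $R = S/I$ gives the second part of the corollary: $\overline{Y} :_R \overline{x}$ is generated by linear forms in $R_1$.

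There is essentially no obstacle here; the only thing to notice is that \Cref{prop:ehh} is stated for a fixed ordering of the variables, whereas the conclusion we want is symmetric in the choice of $Y$ and $x$. The revlex-universal assumption is precisely the ingredient that lets us permute the variables freely and thus upgrade the ``one-ordering'' statement of \Cref{prop:ehh} into the ``all subsets $Y$'' statement of \Cref{cor:ehh}.
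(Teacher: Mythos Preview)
Your proof is correct and matches the paper's intended argument: the corollary is stated without proof precisely because it follows from \Cref{prop:ehh} by reordering the variables so that $Y$ occupies the tail positions and $x$ sits just above them, which the revlex-universal hypothesis permits.
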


\begin{remark}
Under the hypotheses of \Cref{cor:ehh}, Restuccia and Rinaldo claim in \cite[Theorem 1.10]{RR} that, since $I$ has a revlex-universal Gr\"obner basis of quadrics, then $R$ is strongly Koszul (with respect to the variables). Unfortunately, their proof contains a gap; namely, they claim that the colon ideals $\overline{Y} :_R \overline{x}$ we are interested in are not just generated by linear forms, but by variables. However, this is false in general: for a counterexample, take $S = \field[x,y]$ and $I = (x(x-y))$. Then $\overline{0} :_R \overline{x} = (\overline{x-y})$.
\end{remark}

\Cref{prop:ehh} and \Cref{cor:ehh} provide a sufficient condition for certain colon ideals to be linearly generated. In the same vein, we now wish to find a sufficient condition for the same colon ideals to be monomial (but not necessarily linear). To reach this goal, we need to introduce a class of homogeneous polynomials with a very specific kind of sparsity: we will call such polynomials \emph{tidy}.

\begin{definition}[tidy polynomial] \label{def:tidy}
Let $S = \field[x_1, \ldots, x_n]$ and let $f$ be a homogeneous polynomial in $S$. If each variable of $S$ divides at most one monomial in the support of $f$, we say that $f$ is \emph{tidy}. If a set $\mathcal{F} \subseteq S$ consists of tidy polynomials, we say that $\mathcal{F}$ is tidy.
\end{definition}

Tidiness is a natural feature of both monomial and (homogeneous) toric ideals, but we will soon see that it arises in other contexts as well. Recall that an integer matrix $A \in \mathbb{Z}^{m \times n}$ is called a \emph{configuration matrix} if the all-ones vector in $\mathbb{Z}^n$ can be written as a rational linear combination of the rows of $A$. The toric ideal $I_A$ is homogeneous if and only if $A$ is a configuration matrix: see, e.g., \cite[Proposition 3.3]{BinomialIdeals}.

\begin{example} \label{ex:tidy examples}
\phantom{.}
    \begin{enumerate}
        \item Every monomial is trivially tidy. In particular, the unique minimal monomial generating set for a monomial ideal $I$ is a tidy set (and a tidy universal Gr\"obner basis for $I$).   
        \item If $A \in \mathbb{Z}^{m \times n}$ is a configuration matrix, then the homogeneous toric ideal $I_A$ admits a tidy universal Gr\"obner basis. Indeed, by \cite[Corollary 4.4]{Sturmfels}, any reduced Gr\"obner basis for $I_A$ consists of binomials of the form $\mathbf{x}^{\mathbf{u}_+} - \mathbf{x}^{\mathbf{u}_-}$, for some suitable $\mathbf{u} \in \mathbb{Z}^n$ (here $\mathbf{u}_+$ and $\mathbf{u}_-$ are the unique elements of $\mathbb{N}^n$ that have disjoint support and are such that $\mathbf{u} = \mathbf{u}_+ - \mathbf{u}_-$). By construction, every variable of $S$ appears at most in one of $\mathbf{x}^{\mathbf{u}_+}$ and $\mathbf{x}^{\mathbf{u}_-}$, and thus the binomial $\mathbf{x}^{\mathbf{u}_+} - \mathbf{x}^{\mathbf{u}_-}$ is tidy.
        \item Let $M$ be a matrix whose entries are zeros or variables in the polynomial ring $S$. If every variable of $S$ appears in at most one entry on each row and on each column, then all $2$-minors and all $2$-permanents of $M$ are tidy. This happens in particular when $M$ is a sparse generic or a sparse generic symmetric matrix: i.e., a matrix obtained from a generic or generic symmetric matrix by setting to zero some of the entries (making sure to preserve symmetry in the generic symmetric case).
        \item Let $\Delta$ be a pure simplicial complex on $n$ vertices, $\field$ a field of characteristic zero, $S$ the polynomial ring $\field[x_i, z_G \mid i \in \{1, \ldots, n\},\  G \text{ facet of }\Delta]$. Consider the Artinian Gorenstein ring $A_{\Delta} = S/\mathrm{ann}(F_{\Delta})$ associated with the polynomial \[F_{\Delta} := \sum_{G \text{ facet of }\Delta}z_G \cdot \prod_{i \in G}x_i\] via Macaulay's inverse system (see \Cref{subsec:Macaulay's inverse system} and \Cref{rem:squarefree Macaulay in characteristic zero} therein). Then the ring $A_{\Delta}$ admits a tidy universal Gr\"obner basis. Indeed, one can use \cite[Proposition 8.1]{DV} to infer from \cite[Proposition 4.3]{DV} a system of tidy generators for the defining ideal of $A_{\Delta}$. Then, repeating verbatim\footnote{The proof of \cite[Proposition 4.5]{DV} lacks some details about the S-pairs obtained by picking a binomial and a monomial, but the result still stands.} the proof of \cite[Proposition 4.5]{DV}, one gets the desired universal Gr\"obner basis for the defining ideal of $A_{\Delta}$.
    \end{enumerate}
\end{example}

\begin{proposition} \label{prop:monomial colon ideals}
    Let $I \subseteq S = \field[x_1, \ldots, x_n]$ be a homogeneous ideal, let $R = S/I$ and let $\rev$ be the revlex order on $S$ such that $x_1 > x_2 > \ldots > x_n$. Assume that $I$ admits a tidy Gr\"obner basis with respect to $\rev$. Then, for every $i \in [n]$, the colon ideal $(I + (x_{i+1}, x_{i+2}, \ldots, x_n)) :_S x_i$ is generated by monomials modulo $I$. Thus, the colon ideal $(\overline{x}_{i+1}, \overline{x}_{i+2}, \ldots, \overline{x}_n) :_R \overline{x}_i$ is a monomial ideal.
\end{proposition}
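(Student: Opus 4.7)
The plan is to combine \Cref{lem:revlex} with a careful analysis of the tidy Gr\"obner basis $\mathcal{G}$ of $I$. Set $K := (I + (x_{i+1}, \ldots, x_n)) :_S x_i$. By \Cref{lem:revlex}(i) and (ii), the initial ideal $\init_{\rev}(K) = (\init_{\rev}(I) + (x_{i+1}, \ldots, x_n)) :_S x_i$ is monomial. The next step is to exhibit, for each monomial generator of $\init_{\rev}(K)$, a lift in $K$ whose image in $R = S/I$ is either zero or a monomial; such lifts form a Gr\"obner basis of $K$, hence their images give a monomial generating set for $\overline{K} = (\overline{x}_{i+1}, \ldots, \overline{x}_n) :_R \overline{x}_i$.

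A natural generating set for $\init_{\rev}(K)$ consists of the variables $x_{i+1}, \ldots, x_n$ together with, for each $g \in \mathcal{G}$, either $\init_{\rev}(g)$ (if $x_i \nmid \init_{\rev}(g)$) or $\init_{\rev}(g)/x_i$ (if $x_i \mid \init_{\rev}(g)$). The variables $x_k$ with $k > i$ lift trivially to themselves. When $x_i \nmid \init_{\rev}(g)$, the natural lift is $g$ itself, which belongs to $I \subseteq K$ and maps to $0$ in $R$, contributing nothing to $\overline{K}$. When $x_i \mid \init_{\rev}(g)$, I would take the monomial $\init_{\rev}(g)/x_i$ as the lift.

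The only nontrivial step is verifying that this monomial really lies in $K$, i.e., that $\init_{\rev}(g) \in I + (x_{i+1}, \ldots, x_n)$. Writing $g = \init_{\rev}(g) + h$, with $h$ the sum of the non-leading terms of $g$, this reduces to showing $h \in (x_{i+1}, \ldots, x_n)$; moreover we may assume $\init_{\rev}(g)$ involves only the variables $x_1, \ldots, x_i$, since otherwise $\init_{\rev}(g)/x_i$ already lies in $(x_{i+1}, \ldots, x_n) \subseteq K$. Here tidiness and revlex interact: since $x_i \mid \init_{\rev}(g)$, tidiness forces $\beta_i = 0$ for every exponent $\beta$ in the support of $h$, while the revlex order forces the largest index $k^{\ast}$ at which the leading exponent $\alpha$ and $\beta$ differ to satisfy $\alpha_{k^{\ast}} < \beta_{k^{\ast}}$. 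A short case split using $\alpha_i \ge 1$, $\beta_i = 0$, and $\alpha_j = 0$ for $j > i$ rules out $k^{\ast} \le i$, so $k^{\ast} > i$ and hence $\beta_{k^{\ast}} > 0$, giving $\mathbf{x}^\beta \in (x_{i+1}, \ldots, x_n)$. This index chase is the main obstacle; once it is cleared, every generator of $\init_{\rev}(K)$ admits a lift that is either a monomial of $S$ or an element of $I$, and the corresponding images in $R$ yield the desired monomial generators of $\overline{K}$.
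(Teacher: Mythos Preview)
Your proof is correct and rests on the same key step as the paper's: when $x_i \mid \init_{\rev}(g)$, tidiness forces the tail of $g$ to lie in $(x_{i+1},\ldots,x_n)$, so $\init_{\rev}(g) \in I'$ and hence $\init_{\rev}(g)/x_i \in K$. The only difference is packaging: the paper takes an arbitrary $f \in K$, passes to its remainder $r$ modulo $\mathcal{G}$, and inductively shows each monomial in the support of $r$ lies in $K$, whereas you write down generators of $\init_{\rev}(K)$ directly and lift each one to a monomial (or an element of $I$) in $K$. Your structural phrasing is arguably cleaner, since it produces an explicit Gr\"obner basis of $K$ in one stroke rather than peeling off leading terms one at a time; but the underlying argument is the same.
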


\begin{proof}
Fix $i \in \{1, 2, \ldots, n\}$ and let $I' := I + (x_{i+1}, x_{i+2}, \ldots, x_n)$. It is our goal to show that $I' :_S x_i$ is generated by monomials modulo $I$. Let $f \in I' :_S x_i$. Since $\mathcal{G}$ is a Gr\"obner basis for $I$, there exists a unique remainder $r \in S$ such that $f = p + r$ for some $p \in I$ and $\init_{\rev}(g)$ does not divide any term in $r$ for any $g \in \mathcal{G}$. Being in $I$, the polynomial $p$ also belongs to $I':_S x_i$, and thus so does $r = f-p$. If $r=0$ there is nothing to prove, so assume otherwise. We want to show that each monomial in the support of $r$ lies in $I' :_S x_i$ as well. To achieve this, it is enough to prove that $\init_{\rev}(r)$ lies in $I' :_S x_i$, since then $r - \init_{\rev}(r) \in I' :_S x_i$ and we can repeat the process.
    
By \Cref{lem:revlex}(ii), it holds that $\init_{\rev}(I' :_S x_i) = (\init_{\rev}(I) + (x_{i+1}, x_{i+2}, \ldots, x_n)) :_S x_i$; hence, the monomial $\init_{\rev}(r) \cdot x_i$ lies in the monomial ideal $(\init_{\rev}(I) + (x_{i+1}, x_{i+2}, \ldots, x_n))$. 
If $\init_{\rev}(r) \cdot x_i \in (x_{i+1}, x_{i+2}, \ldots, x_n)$, it follows immediately that $\init_{\rev}(r) \in I' :_S x_i$. 
Assume now that $\init_{\rev}(r) \cdot x_i \notin (x_{i+1}, x_{i+2}, \ldots, x_n)$; it must then be that $\init_{\rev}(r) \cdot x_i \in \init_{\rev}(I)$. 
Then, since $\mathcal{G}$ is a Gr\"obner basis for $I$, there must exist $g \in \mathcal{G}$ such that $\init_{\rev}(g)$ divides $\init_{\rev}(r) \cdot x_i$. 
Due to the properties of the remainder $r$, we know that $\init_{\rev}(g)$ does not divide $\init_{\rev}(r)$, and thus $x_i$ must divide $\init_{\rev}(g)$. Since $\rev$ is the revlex order on $S$ such that $x_1 > x_2 > \ldots > x_n$ and $x_i$ divides $\init_{\rev}(g)$, it follows that there exist two homogeneous polynomials $h$ and $h'$ in $S$ such that 
\[g = \init_{\rev}(g) + x_i \cdot h + h',\]
    where each monomial in $h'$ contains at least one variable lower than $x_i$, which implies that $h'$ belongs to the ideal $(x_{i+1}, x_{i+2}, \ldots, x_n)$. Since $\mathcal{G}$ is tidy, the variable $x_i$ can divide at most one monomial in the support of $g$: this implies that $h = 0$, since $x_i$ is already featured in $\init_{\rev}(g)$. As a consequence, $\init_{\rev}(g) = g - h' \in I'$. Since $\init_{\rev}(g)$ divides $\init_{\rev}(r) \cdot x_i$, we have that $\init_{\rev}(r) \cdot x_i \in I'$ and hence $\init_{\rev}(r) \in I' :_S x_i$, as desired.
\end{proof}

\begin{corollary} \label{cor:tidy revlex UGB}
Let $I \subseteq S = \field[x_1, \ldots, x_n]$ be a homogeneous ideal, let $R = S/I$ and let $X = \{x_1, \ldots, x_n\}$ be the set of variables of $S$. Assume that $I$ admits a tidy revlex-universal Gr\"obner basis. Then, for every $Y \subseteq X$ and $x \in X \setminus (Y \cup \{x\})$, the colon ideal $(I + Y) :_S x$ is generated by monomials modulo $I$. Thus, the colon ideal $\overline{Y} :_R \overline{x}$ is generated by monomials.
\end{corollary}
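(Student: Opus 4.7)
The plan is to reduce the corollary to \Cref{prop:monomial colon ideals} by choosing the ordering of variables appropriately. Given $Y \subseteq X$ and $x \in X \setminus Y$, relabel the variables so that the elements of $Y$ become $x_{i+1}, x_{i+2}, \ldots, x_n$ (in any order) and $x$ becomes $x_i$, while the remaining variables of $X \setminus (Y \cup \{x\})$ become $x_1, \ldots, x_{i-1}$ (again in any order). Under this relabeling, the ideal $I + Y$ is exactly $I + (x_{i+1}, \ldots, x_n)$ and the element $x$ is exactly $x_i$.

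Since the given Gr\"obner basis $\mathcal{G}$ of $I$ is \emph{revlex-universal}, it is in particular a Gr\"obner basis with respect to the revlex order $<_{\mathrm{rev}}$ associated with the relabeling $x_1 > x_2 > \cdots > x_n$. The property of being tidy is a purely combinatorial condition on the support of each polynomial (no monomial order enters the definition), so $\mathcal{G}$ is still tidy after relabeling. Hence the hypotheses of \Cref{prop:monomial colon ideals} are satisfied, and we conclude that $(I + (x_{i+1}, \ldots, x_n)) :_S x_i$ is generated by monomials modulo $I$; translating back via the relabeling, $(I + Y) :_S x$ is generated by monomials modulo $I$.

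For the final assertion, pass to the quotient $R = S/I$. The image in $R$ of the ideal $(I + Y) :_S x$ is precisely $\overline{Y} :_R \overline{x}$, and the image of a monomial generating set (modulo $I$) is a monomial generating set in $R$. This yields the claim for the colon ideal in $R$.

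There is essentially no obstacle to this argument beyond keeping track of the relabeling: the key conceptual point, already isolated in \Cref{prop:monomial colon ideals}, is that tidiness together with revlex compatibility forces the residual reduction argument to go through with monomial (rather than merely linear) output. The role of ``revlex-universal'' here is simply to make that hypothesis available for \emph{every} choice of which variables play the role of $x_{i+1}, \ldots, x_n$ and $x_i$.
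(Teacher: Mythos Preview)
Your proposal is correct and matches the paper's intended argument: the corollary is stated immediately after \Cref{prop:monomial colon ideals} with no explicit proof, precisely because it follows by choosing a revlex order that places the variables in $Y$ last and $x$ just above them, which the revlex-universal hypothesis permits. Your relabeling argument spells out exactly this reduction.
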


\begin{remark}
If $I$ satisfies the hypotheses of \Cref{cor:tidy revlex UGB} (for instance, by \Cref{ex:tidy examples}, if $I$ is a monomial ideal or a homogeneous toric ideal), it follows that $R = S/I$ is strongly Koszul with respect to the variables if and only if the collection of all ideals of $R$ generated by variables forms a Koszul filtration\footnote{When the collection of all ideals of $R$ generated by variables forms a Koszul filtration, Ene, Herzog and Hibi say that $R$ is \emph{c-universally Koszul}: see \cite[p.~522]{EHH}.}. The ``only if'' is a consequence of strong Koszulness. To prove the ``if'' direction, pick $y_1, \ldots, y_k, x$ among the variables of $S$. A routine analysis of the Tor long exact sequence associated with the short exact sequence 
\[0 \to \frac{R}{(\overline{y}_1, \ldots, \overline{y}_k) :_R \overline{x}}(-1) \xrightarrow{\cdot \overline{x}} \frac{R}{(\overline{y}_1, \ldots, \overline{y}_k)} \to \frac{R}{(\overline{y}_1, \ldots, \overline{y}_k, \overline{x})} \to 0\]
yields that the colon ideal $(\overline{y}_1, \ldots, \overline{y}_k) :_R \overline{x}$ is linearly generated. Since $I$ has a tidy revlex-universal Gr\"obner basis by assumption, it follows that the colon ideal $(\overline{y}_1, \ldots, \overline{y}_k) :_R \overline{x}$ is actually generated by variables, as desired. Note that, under the additional hypothesis that the ring is toric, the equivalence between strong Koszulness with respect to the variables and $c$-universal Koszulness had already been stated by Murai \cite[Lemma 3.18]{MuraiKoszul}.
\end{remark}

We are now ready to state the main theorem of this section.

\begin{theorem} \label{thm:main theorem}
Let $I \subseteq S = \field[x_1, \ldots, x_n]$ be a homogeneous ideal with a tidy revlex-universal Gr\"obner basis of quadrics. Then $R = S/I$ is strongly Koszul with respect to the variables.
\end{theorem}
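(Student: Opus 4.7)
My plan is to combine the two preceding corollaries. \Cref{cor:ehh}, which applies because the revlex-universal Gr\"obner basis consists of quadrics, forces every colon ideal $\overline{Y} :_R \overline{x}$ (with $Y \subsetneq X$ and $x \in X \setminus Y$) to be generated by linear forms. \Cref{cor:tidy revlex UGB}, which applies because the same basis is tidy, forces the same colon ideal to be generated by images of monomials from $S$. The heart of the argument will then be to show that simultaneous generation by linear forms \emph{and} by monomials forces generation by linear monomials, i.e., by variables, which is exactly what strong Koszulness with respect to $\{x_1, \ldots, x_n\}$ demands.

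Concretely, I would fix any $Y \subsetneq X$ and $x \in X \setminus Y$, set $J := \overline{Y} :_R \overline{x}$, and combine the two corollaries to obtain simultaneously $J = R \cdot J_1$ and $J = (\overline{m}_1, \ldots, \overline{m}_t)_R$ for some monomials $m_j \in S$. The key reduction is then to check that $J_1$ is $\field$-spanned by those $\overline{m}_j$ that happen to be variables; since $J$ is linearly generated, this will immediately yield $J = (\overline{m}_j : \deg(m_j) = 1)_R$, which is generation by variables as desired.

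To carry out this reduction, I would take an arbitrary $u \in J_1$, write $u = \sum_j \alpha_j \overline{m}_j$ with $\alpha_j \in R$, and extract the degree-one part on both sides. Since $I$ is generated by quadrics we have $I_1 = 0$, so $\overline{x}_1, \ldots, \overline{x}_n$ form a $\field$-basis of $R_1$ and the usual degree decomposition is available. Terms with $\deg(m_j) \geq 2$ contribute nothing in degree one; terms with $\deg(m_j) = 1$ contribute $c_j \overline{m}_j$ with $c_j \in \field$; and no $\overline{m}_j$ can be a nonzero constant, since $J = R \cdot J_1 \subseteq \mathfrak{m}$ is a proper ideal of $R$. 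This pins down $u$ as a $\field$-linear combination of the variables among the $\overline{m}_j$, which is exactly the claim.

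The only real subtlety is this degree-extraction step, which is controlled by standard gradedness together with $I_1 = 0$. Everything else amounts to routine bookkeeping on top of \Cref{cor:ehh} and \Cref{cor:tidy revlex UGB}, so I do not expect any genuine obstacle beyond recording the degree analysis carefully.
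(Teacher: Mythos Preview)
Your proposal is correct and follows essentially the same approach as the paper: combine \Cref{cor:ehh} (linear generation) with \Cref{cor:tidy revlex UGB} (monomial generation) and conclude generation by variables. The paper phrases the final step more succinctly---since $J$ is generated in degree one, any minimal generating set extracted from the monomial generating set must also live in degree one and hence consist of variables---whereas you spell out the same graded Nakayama-type observation via an explicit degree-one extraction; the content is identical.
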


\begin{proof}
    Let $Y \subseteq \{x_1, \ldots, x_n\}$ and let $x \in \{x_1, \ldots, x_n\} \setminus Y$. Since $I$ has a revlex-universal Gr\"obner basis of quadrics, the colon ideal $\overline{Y} :_R \overline{x}$ is generated by linear forms by \Cref{cor:ehh}. In particular, $\overline{Y} :_R \overline{x}$ is \emph{minimally} generated by linear forms. Since $I$ has a tidy revlex-universal Gr\"obner basis, the colon ideal $\overline{Y} :_R \overline{x}$ admits a generating set made of monomials by \Cref{cor:tidy revlex UGB}. Extracting a minimal generating set from such a monomial generating set yields that $\overline{Y} :_R \overline{x}$ is (minimally) generated by variables, which proves the claim. 
\end{proof}

Since monomials are trivially tidy, \Cref{thm:main theorem} immediately implies the following well-known result (see, e.g., \cite[Theorem 3.15]{CDR}):
\begin{corollary} \label{cor:monomial strongly Koszul}
    Let $S = \field[x_1, \ldots, x_n]$ and let $I \subseteq S$ be a quadratic monomial ideal. Then $R = S/I$ is strongly Koszul with respect to the variables.
\end{corollary}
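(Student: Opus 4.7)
The plan is to deduce this corollary as an immediate application of \Cref{thm:main theorem}, by producing a tidy revlex-universal Gröbner basis of quadrics for the ideal $I$. First, I would let $G$ denote the unique minimal monomial generating set of $I$. Since $I$ is a quadratic monomial ideal, every element of $G$ is a monomial of degree $2$, and hence $G$ is a set of quadrics.

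Next, I would verify the two hypotheses of \Cref{thm:main theorem} for $G$. Tidiness is immediate: as already observed in \Cref{ex:tidy examples}(i), any monomial is vacuously tidy (each variable divides at most one monomial in the support, namely the monomial itself). For the Gröbner basis condition, I would recall that for a monomial ideal and any monomial order $<$, one has $\init_{<}(I) = I$, and the minimal monomial generators are precisely a minimal generating set of $\init_{<}(I)$; consequently $G$ is a Gröbner basis for $I$ with respect to every monomial order, and in particular with respect to every degree reverse lexicographic order on $S$. Thus $G$ is a tidy revlex-universal Gröbner basis of quadrics for $I$.

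Invoking \Cref{thm:main theorem} now yields that $R = S/I$ is strongly Koszul with respect to the variables, which is the desired conclusion. There is essentially no obstacle here, since monomial generating sets trivially satisfy both the tidiness and the universal Gröbner basis requirements; the entire content is packaged inside \Cref{thm:main theorem}, and this corollary is just recording the simplest possible instance of it.
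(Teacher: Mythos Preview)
Your proposal is correct and follows essentially the same approach as the paper: the paper simply notes that monomials are trivially tidy and invokes \Cref{thm:main theorem}, while you spell out the same observation with slightly more detail (explicitly noting that the minimal monomial generators form a universal Gr\"obner basis).
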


\Cref{thm:main theorem} also allows us to recover a result about toric rings featured in \cite[Theorem 2.7]{RR}, \cite[Corollary 1.4]{EHH} and \cite[Corollary 1.3 and Remark 1.5]{MatsudaOhsugi}:

\begin{corollary} \label{cor:toric strongly Koszul}
    Let $S = \field[x_1, \ldots, x_n]$, let $A \in \mathbb{Z}^{m \times n}$ be a configuration matrix and let $\field[A] = S/I_A$ be the (standard graded) toric ring associated with $A$. Assume that, for every choice of a total ordering of the variables of $S$, the reduced Gr\"obner basis of $I_A$ with respect to the associated revlex order consists of quadrics. Then the toric ring $\field[A]$ is strongly Koszul with respect to the variables.
\end{corollary}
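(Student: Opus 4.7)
The plan is to apply \Cref{thm:main theorem} by exhibiting a tidy revlex-universal Gr\"obner basis of quadrics for $I_A$. All the ingredients are already in place: tidiness comes for free from the toric setting via \Cref{ex:tidy examples}(ii), while the hypothesis supplies quadraticity of reduced revlex Gr\"obner bases.

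First I would recall from \Cref{ex:tidy examples}(ii) that, since $A$ is a configuration matrix, every reduced Gr\"obner basis of $I_A$ (with respect to any term order whatsoever) consists of binomials of the form $\mathbf{x}^{\mathbf{u}_+} - \mathbf{x}^{\mathbf{u}_-}$ with disjoint supports, and hence is automatically tidy. Combining this with the standing hypothesis, for each of the finitely many total orderings $\sigma$ of the variables $x_1, \ldots, x_n$, the reduced Gr\"obner basis $\mathcal{G}_\sigma$ of $I_A$ with respect to the revlex order induced by $\sigma$ is a finite set of tidy quadratic binomials.

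Next I would form the finite union $\mathcal{G} := \bigcup_\sigma \mathcal{G}_\sigma$ over all orderings $\sigma$. By construction $\mathcal{G}$ consists entirely of tidy quadrics, and for every revlex order $<_{\mathrm{rev},\sigma}$ the set $\mathcal{G}$ contains $\mathcal{G}_\sigma$, which is already a Gr\"obner basis of $I_A$ with respect to $<_{\mathrm{rev},\sigma}$; hence $\mathcal{G}$ itself is a Gr\"obner basis of $I_A$ with respect to every revlex order. In particular $\mathcal{G}$ generates $I_A$ and is a tidy revlex-universal Gr\"obner basis of quadrics.

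At this point \Cref{thm:main theorem} applies verbatim and yields that $\field[A] = S/I_A$ is strongly Koszul with respect to the variables. There is no real obstacle here: the entire argument is a clean packaging of \Cref{ex:tidy examples}(ii) (tidiness is automatic for toric binomials with disjoint support) together with \Cref{thm:main theorem}; the only mild point worth noting is that one must assemble a \emph{single} set $\mathcal{G}$ that works for all revlex orders simultaneously, which is why one takes the union of the reduced bases rather than any one of them in isolation.
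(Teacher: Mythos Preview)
Your argument is correct and follows exactly the route of the paper's proof: invoke \Cref{ex:tidy examples}(ii) for tidiness, use the hypothesis for quadraticity, and conclude via \Cref{thm:main theorem}. The only (harmless) difference is that you spell out the construction of a single set $\mathcal{G}$ as the union of the reduced bases, whereas the paper simply asserts that the hypotheses of \Cref{thm:main theorem} are met.
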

\begin{proof}
As noted in \Cref{ex:tidy examples}(ii), every reduced Gr\"obner basis of the homogeneous toric ideal $I_A$ is tidy. Since by assumption every reduced Gr\"obner basis of $I_A$ with respect to a revlex order is quadratic, the hypotheses of \Cref{thm:main theorem} are met, and the claim follows.
\end{proof}

As noted by Matsuda and Ohsugi \cite[Examples 1.6 and 1.7]{MatsudaOhsugi}, the implication in \Cref{cor:toric strongly Koszul} cannot be reversed; indeed, there exist strongly Koszul toric rings that do not admit a revlex-universal Gr\"obner basis of quadrics. An example is the fourth Veronese subalgebra of $\field[t_1, t_2]$: see \Cref{rem:Hankel} below for an explanation. Such objects exist also if we further require that the ring is generated by squarefree monomials as a $\field$-algebra: an example proposed by Matsuda and Ohsugi is $\field[t_4, t_1t_4, t_2t_4, t_3t_4, t_1t_2t_4, t_1t_3t_4, t_2t_3t_4, t_1t_2t_3t_4]$.

We now provide an even smaller example of a strongly Koszul algebra where the hypotheses of \Cref{thm:main theorem} are not met. We will see more in \Cref{sec:not G-quadratic}.

\begin{remark} \label{rem:SK and no quadratic revlex-UGB}
    Let $S = \field[x_1,x_2,x_3]$ and consider $R = S/I$, where $I = (x_1x_3-x_2^2, x_2x_3, x_3^2)$. One checks that $R$ is strongly Koszul with respect to the variables, but the revlex initial ideal of $I$ with respect to the ordering $x_2 < x_1 < x_3$ has the cubic form $x_2^3$ among its minimal generators.
\end{remark}

It is a consequence of \Cref{lem:SK and quotients} that, if $R = S/I$ is strongly Koszul with respect to the variables, then so is any ring obtained from $R$ by quotienting out some variables. We now wish to show that the same kind of transfer holds for a stronger condition, namely, when the ideal $I$ admits a tidy revlex-universal Gr\"obner basis of quadrics.

\begin{lemma} \label{lem:revlex GB modulo variables}
Let $I \subseteq S = \field[x_1, \ldots, x_n]$ be a homogeneous ideal, let $\rev$ be the revlex order on $S$ such that $x_1 > x_2 > \ldots > x_n$ and let $\mathcal{G}$ be a Gr\"obner basis of $I$ with respect to $\rev$. Let $S' = \field[x_1, \ldots, x_i]$, let $\pi_i$ be the natural projection $S \twoheadrightarrow S'$ and let $\widetilde{\mathcal{G}} := \{\pi_i(g) \mid g \in \mathcal{G},\ g \notin \ker(\pi_i)\}$. Then $\widetilde{\mathcal{G}}$ is a Gr\"obner basis of $\pi_i(I)$ with respect to the revlex order $<'_{\mathrm{rev}}$ on $S'$ such that $x_1 > x_2 > \ldots > x_i$. Moreover, if $\mathcal{G}$ is quadratic (respectively, tidy), then so is $\widetilde{\mathcal{G}}$.
\end{lemma}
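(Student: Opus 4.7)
The plan is to invoke \Cref{lem:revlex and projection} to transfer the Gr\"obner basis property of $\mathcal{G}$ through the projection $\pi_i$. First I would note that $\widetilde{\mathcal{G}} \subseteq \pi_i(I)$ holds by construction, so the substantive content is the equality of initial ideals
\[\init_{<'_{\mathrm{rev}}}(\pi_i(I)) = \bigl(\init_{<'_{\mathrm{rev}}}(\widetilde g) \mid \widetilde g \in \widetilde{\mathcal{G}}\bigr).\]

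To prove this, I would apply $\pi_i$ to the monomial ideal identity $\init_{<_{\mathrm{rev}}}(I) = \bigl(\init_{<_{\mathrm{rev}}}(g) \mid g \in \mathcal{G}\bigr)$. The left-hand side becomes $\init_{<'_{\mathrm{rev}}}(\pi_i(I))$ by \Cref{lem:revlex and projection}(ii). On the right-hand side, the generators split into two groups: those $g \in \ker(\pi_i)$, whose leading terms are again in $\ker(\pi_i)$ and therefore contribute zero by \Cref{lem:revlex and projection}(i); and those $g \notin \ker(\pi_i)$, for which the same result says $\pi_i(\init_{<_{\mathrm{rev}}}(g)) = \init_{<'_{\mathrm{rev}}}(\pi_i(g))$. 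Patching these contributions together yields the displayed identity, and hence $\widetilde{\mathcal{G}}$ is a revlex Gr\"obner basis of $\pi_i(I)$.

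For the ``moreover'' part, the projection $\pi_i$ acts on monomials by killing those divisible by some $x_j$ with $j > i$ and fixing the rest. Hence, for each $g \notin \ker(\pi_i)$, the support of $\pi_i(g)$ is contained in $\supp(g)$ and the surviving monomials retain their common degree. Quadraticity is therefore inherited directly, and so is tidiness: if some variable $x_j$ with $j \leq i$ divided two distinct monomials in $\supp(\pi_i(g))$, those same two monomials would already lie in $\supp(g)$ and be divisible by $x_j$, contradicting tidiness of $g$.

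I do not foresee a serious obstacle here; the only point demanding care is the degenerate case $g \in \ker(\pi_i)$, where $\pi_i(g) = 0$ has no well-defined leading term. This is neatly dispatched by the very definition of $\widetilde{\mathcal{G}}$, which discards such elements from the start, and by the fact that \Cref{lem:revlex and projection}(i) forces $\init_{<_{\mathrm{rev}}}(g)$ to also lie in $\ker(\pi_i)$, so these generators drop out of both sides of the Gr\"obner basis equation simultaneously.
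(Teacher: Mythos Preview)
Your proposal is correct and follows essentially the same approach as the paper: both arguments reduce to showing $\init_{<'_{\mathrm{rev}}}(\pi_i(I)) = \bigl(\init_{<'_{\mathrm{rev}}}(\pi_i(g)) \mid g \in \mathcal{G},\ g \notin \ker(\pi_i)\bigr)$ by applying $\pi_i$ to the Gr\"obner basis identity for $I$ and invoking the two parts of \Cref{lem:revlex and projection}, handling the $g \in \ker(\pi_i)$ case via part~(i). Your treatment of the ``moreover'' clause is in fact more detailed than the paper's, which simply declares quadraticity and tidiness to be clearly preserved.
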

\begin{proof}
It is clear that quadraticity and tidiness are passed on from the set $\mathcal{G}$ to the set $\widetilde{\mathcal{G}}$. Let us prove that $\widetilde{\mathcal{G}}$ is a Gr\"obner basis of $\pi_i(I)$ with respect to the revlex order $<'_{\mathrm{rev}}$.

\noindent We need to show that $\init_{<'_{\mathrm{rev}}}(\pi_i(I)) = (\{\init_{<'_{\mathrm{rev}}}(\pi_i(g)) \mid g \in \mathcal{G},\  g \notin \ker(\pi_i)\})$. By \Cref{lem:revlex and projection}(i), one has that \[
\begin{split}(\{\init_{<'_{\mathrm{rev}}}(\pi_i(g)) \mid g \in \mathcal{G},\  g \notin \ker(\pi_i)\}) &= (\{\pi_i(\init_{<_{\mathrm{rev}}}(g)) \mid g \in \mathcal{G},\  g \notin \ker(\pi_i)\})\\ &= \pi_i(\{\init_{<_{\mathrm{rev}}}(g)) \mid g \in \mathcal{G},\  g \notin \ker(\pi_i)\})\\ &= \pi_i(\{\init_{<_{\mathrm{rev}}}(g)) \mid g \in \mathcal{G}\}).
\end{split}\]
Since $\mathcal{G}$ is a Gr\"obner basis of $I$ with respect to $<_{\mathrm{rev}}$, one has that $\pi_i(\{\init_{<_{\mathrm{rev}}}(g)) \mid g \in \mathcal{G}\}) = \pi_i(\init_{<_{\mathrm{rev}}}(I))$; by \Cref{lem:revlex and projection}(ii), one has that $\pi_i(\init_{<_{\mathrm{rev}}}(I)) = \init_{<'_{\mathrm{rev}}}(\pi_i(I))$, which concludes the proof.
\end{proof}

\begin{corollary} \label{cor:revlex-UGB modulo variables}
    Let $I \subseteq S = \field[x_1, \ldots, x_n]$ be a homogeneous ideal, let $X = \{x_1, \ldots, x_n\}$ be the set of variables of $X$ and let $Y \subseteq X$. Let $T = \field[x \mid x \in X \setminus Y]$ and let $\pi$ be the natural projection $S \twoheadrightarrow T$. Let $\mathcal{G}$ be a revlex-universal Gr\"obner basis of $I$, and let $\widetilde{\mathcal{G}} := \{\pi(g) \mid g \in \mathcal{G},\ g \notin \ker(\pi)\}$. Then $\widetilde{\mathcal{G}}$ is a revlex-universal Gr\"obner basis of $\pi(I)$. Moreover, if $\mathcal{G}$ is quadratic (respectively, tidy), then so is $\widetilde{\mathcal{G}}$.
\end{corollary}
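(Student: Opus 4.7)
The plan is to reduce the statement to a single application of Lemma \ref{lem:revlex GB modulo variables}, since that lemma already handles what happens when we project modulo the bottom variables of a revlex order. The only additional observation needed is that I can always arrange a revlex order on $S$ whose bottom entries are exactly the variables in $Y$ and whose top part induces any prescribed revlex order on $T$.

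Concretely, I would fix an arbitrary revlex order $<'_{\mathrm{rev}}$ on $T$ and prove $\widetilde{\mathcal{G}}$ is a Gr\"obner basis of $\pi(I)$ with respect to it. Relabel $X \setminus Y = \{y_1, \ldots, y_m\}$ so that $y_1 >'_{\mathrm{rev}} y_2 >'_{\mathrm{rev}} \ldots >'_{\mathrm{rev}} y_m$, list $Y = \{z_1, \ldots, z_k\}$ in any order, and set $(x_1, \ldots, x_n) := (y_1, \ldots, y_m, z_1, \ldots, z_k)$. Let $<_{\mathrm{rev}}$ be the revlex order on $S$ with $x_1 > x_2 > \ldots > x_n$; by construction, its restriction to the variables of $T$ is exactly $<'_{\mathrm{rev}}$, and its bottom $k$ variables are precisely the elements of $Y$. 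In this labeling the projection $\pi\colon S \twoheadrightarrow T$ coincides with the projection $\pi_m\colon S \twoheadrightarrow \field[x_1, \ldots, x_m]$ appearing in Lemma \ref{lem:revlex GB modulo variables}.

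Since $\mathcal{G}$ is a revlex-universal Gr\"obner basis, it is in particular a Gr\"obner basis of $I$ with respect to $<_{\mathrm{rev}}$. Applying Lemma \ref{lem:revlex GB modulo variables} with $i = m$ then gives that $\widetilde{\mathcal{G}} = \{\pi(g) \mid g \in \mathcal{G},\ g \notin \ker(\pi)\}$ is a Gr\"obner basis of $\pi(I)$ with respect to $<'_{\mathrm{rev}}$. Since $<'_{\mathrm{rev}}$ was arbitrary, $\widetilde{\mathcal{G}}$ is a revlex-universal Gr\"obner basis of $\pi(I)$. The preservation of quadraticity and tidiness is likewise inherited directly from Lemma \ref{lem:revlex GB modulo variables}, since that lemma already records these properties as transferring from $\mathcal{G}$ to its image.

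No genuine obstacle arises: this corollary is essentially a repackaging of Lemma \ref{lem:revlex GB modulo variables} allowing the set of removed variables to be any subset of $X$ rather than a terminal segment of a specific revlex order. The universality of $\mathcal{G}$ provides exactly the flexibility needed to realise $Y$ as such a terminal segment for a suitably chosen order.
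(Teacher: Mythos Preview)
Your proof is correct and follows essentially the same approach as the paper: fix an arbitrary revlex order on $T$, extend it to a revlex order on $S$ by placing the variables in $Y$ at the bottom, invoke the revlex-universality of $\mathcal{G}$ to know it is a Gr\"obner basis for this extended order, and apply Lemma~\ref{lem:revlex GB modulo variables}. The paper's proof is slightly more terse in how it specifies the extended order, but the argument is the same.
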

\begin{proof}
The statements about quadraticity and tidiness are clear. Now pick any revlex order $\prec_T$ in $T$; it is our goal to show that $\widetilde{\mathcal{G}}$ is a Gr\"obner basis with respect to $\prec_T$. Let $<_S$ be a revlex order in $S$ such that:
\begin{itemize}
    \item if $y \in Y$, $z \in X \setminus Y$, then $y <_S z$;
    \item if $z, z' \in X \setminus Y$, then $z <_S z'$ if and only if $z \prec_T z'$.
\end{itemize}
Since $\mathcal{G}$ is a revlex-universal Gr\"obner basis of $I$ by assumption, in particular it is a Gr\"obner basis of $I$ with respect to $<_S$; hence, by \Cref{lem:revlex GB modulo variables}, we have that $\widetilde{\mathcal{G}}$ is a Gr\"obner basis of $\pi(I)$ with respect to $\prec_T$, which ends the proof.
\end{proof}

\section{Strong Koszulness for quadric hypersurfaces and ideals of 2-minors} \label{sec:applications}
In this section we will use \Cref{thm:main theorem} to certify strong Koszulness for certain classes of standard graded $\field$-algebras. As a warm-up, we begin by addressing the case of a quadric hypersurface. This is an immediate consequence of \Cref{thm:main theorem} when the characteristic of $\field$ is not $2$, but requires some extra work otherwise.

\begin{proposition} \label{prop:quadric hypersurface}
    Let $S = \field[x_1, \ldots, x_n]$ and let $f \in S_2$. Then $S/(f)$ is strongly Koszul.
\end{proposition}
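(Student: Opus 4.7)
The plan is to bring $f$ into a tidy shape via a linear change of coordinates on $S$ and then invoke \Cref{thm:main theorem}. Note that a singleton $\{f\}$ is tautologically a Gr\"obner basis of the principal ideal $(f)$ with respect to every monomial order, hence in particular a revlex-universal Gr\"obner basis of quadrics; so, once $f$ is tidy, strong Koszulness of $S/(f)$ with respect to the new variables follows immediately from \Cref{thm:main theorem}.

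When $\mathrm{char}(\field) \neq 2$, the classical theory of quadratic forms---iterated completion of the square, or equivalently Gram--Schmidt orthogonalization---produces a linear change of coordinates bringing $f$ to a diagonal form $\sum_{i=1}^{r} c_i y_i^2$. Each variable divides at most one monomial in the support of this expression, so $f$ is tidy and \Cref{thm:main theorem} closes the argument.

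When $\mathrm{char}(\field) = 2$ we cannot complete the square, and tidying $f$ is genuinely obstructed: for instance, $f = x^2 + xy + y^2 \in \mathbb{F}_2[x,y]$ admits no tidy expression under any linear change of variables (a direct check on invertible $2\times 2$ transformations). My plan here is to invoke the characteristic-$2$ Witt decomposition, which writes $f$ up to linear change of coordinates as an orthogonal sum of hyperbolic planes $y_{2k-1}y_{2k}$, squared variables $\alpha_i z_i^2$ coming from the radical of the associated bilinear form, and at most a low-dimensional anisotropic block (for example an Arf plane $\alpha^2 + \alpha\beta + \gamma \beta^2$). The first two kinds of summands are tidy; for the anisotropic piece, where \Cref{thm:main theorem} does not apply, I would verify the colon conditions of strong Koszulness by a direct, finite computation in the few variables the block involves, exploiting that in $R = S/(f)$ the relation provided by $f$ lets one rewrite any degree-$2$ monomial of the anisotropic block as an element already in the ideal generated by the remaining basis vectors.

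The main obstacle I foresee is this anisotropic block in characteristic $2$: since $f$ is a single relation, $S/(f)$ does not decompose as a tensor or fiber product of the Witt summands, so \Cref{thm:strongly Koszul tensor and fiber products} is unavailable, and one must argue strong Koszulness of the full quotient directly against the global basis of $R_1$ produced by the Witt decomposition. The expectation is that the orthogonality of the decomposition keeps the colon-ideal bookkeeping tractable and reduces the proof to a small case analysis on the anisotropic factor.
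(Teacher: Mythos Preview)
Your characteristic $\neq 2$ argument is exactly the paper's: diagonalize and apply \Cref{thm:main theorem}.

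In characteristic $2$ your starting point (the Witt-type normal form) also matches the paper's, but your plan after that has a gap. Over an arbitrary field of characteristic $2$ the regular non-hyperbolic part need not be a single ``low-dimensional anisotropic block'': the normal form the paper uses (and that you are implicitly invoking) is
\[
g \;=\; \sum_{i=1}^{s}\bigl(a_iv_i^2 + v_iw_i + b_iw_i^2\bigr) \;+\; \sum_{j=s+1}^{m} v_jw_j \;+\; \sum_{k=1}^{t} c_kz_k^2,
\]
with \emph{arbitrarily many} Arf-type planes $a_iv_i^2+v_iw_i+b_iw_i^2$. So there is no ``finite case analysis in a few variables'' to fall back on, and as you yourself note, the orthogonal decomposition does not translate into a tensor or fiber product of $S/(g)$.

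The paper's fix is not to isolate the anisotropic piece at all, but to rerun the proof of \Cref{thm:main theorem} for the single generator $g$. Since $\{g\}$ is already a revlex-universal Gr\"obner basis of quadrics, \Cref{cor:ehh} gives that each colon ideal $\overline{Y}:_R\overline{x}$ is generated by linear forms; it remains to show it is generated by \emph{variables}. One repeats the argument of \Cref{prop:monomial colon ideals}: the only place tidiness was used there was to say that $x$ divides at most one monomial of $g$. In the present normal form the sole obstruction is $x=v_i$ (or $x=w_i$) in some Arf plane, where $x$ divides two monomials. But if $\init_{\rev}(g)=a_iv_i^2$ under a revlex order with the variables of $Y$ smallest and $x=v_i$ next, then every other monomial of $g$ must be divisible by some variable in $Y$, so $g-\init_{\rev}(g)\in (Y)$ and $\init_{\rev}(g)\in I'=I+(Y)$, which is exactly what the proof needs. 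This one-line patch replaces your proposed case analysis and works uniformly regardless of how many Arf planes appear.
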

\begin{proof}
Assume first that $\mathrm{char}(\field) \neq 2$. Then, after a linear change of coordinates, the quadric $f$ can be written as $\sum_{i=1}^{n}\lambda_i x_i^2$, where each $\lambda_i$ is a (possibly null) element of $\field$: see, e.g.,~\cite[Section IV.1.4]{SerreArithmetic} or \cite[Proposition 7.29]{QuadraticForms}. The claim then follows from \Cref{thm:main theorem}.

Let us now assume that $\mathrm{char}(\field) = 2$. Call $X$ the set of variables in $S$. By \cite[Proposition 7.31]{QuadraticForms}, there exists a linear change of coordinates sending the quadric $f$ into
\[\sum_{i=1}^{m}(a_iv_i^2 + v_iw_i + b_iw_i^2) + \sum_{k=1}^{t}c_kz_k^2,\]
where all the $a_i, b_i, c_k$ are elements of $\field$, each $c_k$ is nonzero, and all the $v_i, w_i, z_k$ are distinct elements of $X$. If some $b_i$ is zero, then we apply the change of coordinates $w_i \mapsto w_i+a_iv_i$ to transform the part $a_iv_i^2+v_iw_i$ of the quadric into $v_iw_i$. (Recall that we are working in characteristic $2$, so signs do not matter.) Analogously, if some $a_i$ is zero, we apply the change of coordinates $v_i \mapsto v_i+b_iw_i$ to the same effect. We can hence assume without loss of generality that our quadric is
\[g := \sum_{i=1}^{s}(a_iv_i^2 + v_iw_i + b_iw_i^2) + \sum_{j=s+1}^{m}v_jw_j + \sum_{k=1}^{t}c_kz_k^2,\]
where all the $a_i, b_i, c_k$ are now nonzero elements of $\field$. Now let $I = (g)$ and $R = S/I$. We need to prove that, for any $Y \subsetneq X$ and $x \in X \setminus Y$, the colon ideal $\overline{Y} :_R \overline{x}$ (that is isomorphic to $(I':_S x)/I$, where $I' := I + Y$) is generated by variables. 
We now follow closely the argument in the proof of \Cref{thm:main theorem}. Since $\mathcal{G} := \{g\}$ is trivially a revlex-universal Gr\"obner basis of quadrics for the principal ideal $I$, the colon ideal $\overline{Y} :_R \overline{x}$ is generated by linear forms. 
We now need to prove that $\overline{Y} :_R \overline{x}$ is generated by variables. If $\overline{Y} :_R \overline{x} = (\overline{0})$ there is nothing to prove, so assume otherwise and pick a nonzero homogeneous linear form $\ell \in I' :_S x$. 
Fix a revlex term order $\rev$ on $S$ such that $y < x < z$ for every $y \in Y$ and $z \in X \setminus (Y \cup \{x\})$. Recall that, by \Cref{lem:revlex}(ii), it holds that $\init_{\rev}(\ell) \cdot x \in (\init_{\rev}(I) + Y)$; we want to show that $\init_{\rev}(\ell) \in I':_S x$. 
If $\init_{\rev}(\ell) \cdot x \in (y \mid y \in Y)$, we are done; hence, for the rest of the proof we assume that $\init_{\rev}(\ell)\cdot x \notin (y \mid y \in Y)$ and $\init_{\rev}(\ell) \cdot x\in \init_{\rev}(I)$, which implies that $\init_{\rev}(\ell) \cdot x = \init_{\rev}(g)$. 
If $x$ appears in at most one monomial in $g$, we conclude as in the proof of \Cref{thm:main theorem}. Assume otherwise. Then, without loss of generality, we can assume that $x = v_1$. Since $\init_{\rev}(\ell)\cdot v_1 = \init_{\rev}(g)$, it must be that $\init_{\rev}(g) = a_1v_1^2$. Due to how $<_{\mathrm{rev}}$ was chosen, one can have that $\init_{\rev}(g) = a_1v_1^2$ only if every monomial in $g-a_1v_1^2$ is divisible by a variable in $Y$; as a consequence, $g-\init_{\rev}(g) \in (y \mid y \in Y)$, which in turn implies that $\init_{\rev}(g) \in I'$. It follows that $\init_{\rev}(\ell) \cdot x = \init_{\rev}(g) \in I'$, which ends the proof.
\end{proof}

We devote the rest of this section to analyze ideals of $2$-minors of matrices filled with variables and zeros.  To do so, we introduce some useful notation following Conca and Welker \cite[Section 7]{ConcaWelker}.

\begin{notation} \label{not:concawelker}
\phantom{.}
\begin{enumerate}
    \item If $m, n$ are positive integers, let $\Sgen = \field[x_{ij} \mid i \in [m], j \in [n]]$ and let $\Xgen$ be the $m \times n$ matrix whose $(i,j)$-th entry is $x_{ij}$. If $G$ is a bipartite graph on $[m] \sqcup [\tilde{n}]$ with edge set $E$, we will denote by $\Xgen_G$ the matrix whose $(i,j)$-th entry is $0$ if $\{i, \tilde{j}\} \in E$ and $x_{ij}$ otherwise.
    \item If $n$ is a positive integer, let $\Ssym = \field[x_{ij} \mid 1 \leq i \leq j \leq n]$ and let $\Xsym$ be the $n \times n$ symmetric matrix whose $(i,j)$-th entry (with $i \leq j$) is $x_{ij}$. If $H$ is a graph (possibly with loops) on $[n]$, we will denote by $\Xsym_H$ the matrix obtained from $\Xsym$ by setting to zero the $(i,j)$-th and $(j,i)$-th entry whenever $\{i,j\}$ (with $i \neq j$) is an edge of $H$, and the $(i,i)$-th entry whenever $H$ has a loop in $i$.
    \item If $n$ is a positive integer, let $\Ssk = \field[x_{ij} \mid 1 \leq i < j \leq n]$ and let $\Xsk$ be the $n \times n$ skew-symmetric matrix whose $(i,j)$-th entry (with $i < j$) is $x_{ij}$.
\end{enumerate}
\end{notation}

The next result was first claimed by Restuccia and Rinaldo in \cite[Theorem 3.6]{RR}. Since the proof argument presented in \cite{RR} is not clear to us, we produce an independent proof below.

\begin{proposition} \label{prop:symmetric revlex-universal 2-minors}
    In the setting of \Cref{not:concawelker}(ii), one has that the $2$-minors of the generic symmetric matrix $\Xsym$ form a tidy quadratic revlex-universal basis for the ideal $I_2(\Xsym)$ they generate. 
\end{proposition}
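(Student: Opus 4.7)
My plan is to split the proof into two parts: verifying tidiness by a direct combinatorial check, and then verifying the revlex-universal Gr\"obner basis property via Buchberger's criterion.

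\textbf{Step 1 (Tidiness).} A $2$-minor of $\Xsym$ is determined by row indices $\{i,k\}$ with $i<k$ and column indices $\{j,l\}$ with $j<l$, and it equals $f=x_{ij}x_{kl}-x_{il}x_{kj}$ under the convention $x_{ab}=x_{ba}$. I will enumerate the ways in which two of the four variables $x_{ij}, x_{kl}, x_{il}, x_{kj}$ can coincide. Using the strict inequalities $i<k$ and $j<l$, the unique coincidence compatible with symmetry turns out to be $x_{il}=x_{kj}$ in the case $i=j$ and $k=l$, which yields $f=x_{ii}x_{kk}-x_{ik}^2$. In every situation the two monomials share no variable, so $f$ is tidy.

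\textbf{Step 2 (Revlex-universal Gr\"obner basis).} Fix any revlex order $<$ on $\Ssym$, i.e.\ fix a total order of the variables $x_{ij}$. For a tidy binomial $m_1-m_2$, the leading term is the monomial whose support excludes the $<$-smallest variable appearing anywhere in the binomial; this reads off each $2$-minor's initial term directly from the chosen ordering of variables. By Buchberger's criterion, it suffices to check that whenever two $2$-minors $f,g$ have leading monomials sharing a variable, $S(f,g)$ reduces to zero modulo the full set of $2$-minors. I will classify such pairs by how their row/column index pairs overlap, taking into account that by symmetry a row label on one side can coincide with a column label on the other, and separately handling the special case where one or both minors take the form $x_{ii}x_{kk}-x_{ik}^2$.

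In each configuration I will write $S(f,g)$ explicitly as a monomial-linear combination of $2$-minors whose initial terms are $<$-strictly smaller than the initial monomial of $S(f,g)$, thereby verifying the reduction. The main obstacle will be the exhaustive casework: keeping track of both the overlap patterns of the indices and the revlex-determined leading monomials simultaneously. I expect this to remain tractable because, when two minors share a common variable in their leading terms, the joint support involves at most six indices (three rows together with three columns, possibly coincident via symmetry), and each such $6$-index pattern admits a short and explicit syzygy among $2$-minors that accomplishes the rewriting.
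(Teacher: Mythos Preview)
Your proposal is correct and follows essentially the same approach as the paper: tidiness is verified by a direct index check, and the Gr\"obner basis property is established via Buchberger's criterion through case analysis on S-pairs whose leading terms share a variable. The paper streamlines the casework by fixing the shared variable $x_{ij}$ and writing both minors in the uniform form $f=[ik\mid j\ell]$, $g=[ia\mid jb]$, which lets the S-polynomial be computed once as $x_{ib}x_{aj}x_{k\ell}-x_{i\ell}x_{kj}x_{ab}$ and then reduced according to a small number of coincidences among $a,b,k,\ell$; you may find this parametrization cleaner than classifying overlap patterns from scratch, but either route works.
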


\begin{convention}
    In the proof of \Cref{prop:symmetric revlex-universal 2-minors}, if $i_1$ and $i_2$ are possibly coincident integers in $\{1,2,\ldots, n\}$, we will write either $x_{i_1j_1}$ or $x_{j_1i_1}$ to denote the variable $x_{\min\{i_1,j_1\},\max\{i_1,j_1\}}$ in $\Ssym$. Also, we will write $[i_1i_2 \ | \ j_1j_2]$ to denote the $2$-minor of $\Xsym$ obtained by picking row indices $i_1,i_2$ and column indices $j_1,j_2$ (or vice versa, since $\Xsym$ is symmetric), and will abuse notation by considering such a minor only up to a global sign (i.e., $[i_1i_2 \ | \ j_1j_2]$ can be either $x_{i_1j_1}x_{i_2j_2}-x_{i_1j_2}x_{i_2j_1}$ or $-x_{i_1j_1}x_{i_2j_2}+x_{i_1j_2}x_{i_2j_1}$).
\end{convention}

\begin{proof}[Proof of \Cref{prop:symmetric revlex-universal 2-minors}]
 Note that a $2$-minor of $\Xsym$ contains the variable $x_{ij}$ (with $i,j$ possibly coincident) precisely when it is obtained by picking $i$ among the row indices and $j$ among the column indices (or vice versa). We want to prove that the S-polynomial of any two distinct 2-minors $f$ and $g$ with non-coprime initial terms reduces to zero. 
 Say these 2-minors both contain the variable $x_{ij}$ (with $i,j$ possibly coincident) in their leading terms. Then there exist indices $k,a$ (possibly coincident, but both different from $i$) and $\ell,b$ (possibly coincident, but both different from $j$) such that \[f = [ik \ |\  j\ell] = \underline{x_{ij}x_{k\ell}}-x_{i\ell}x_{kj}\quad \text{and} \quad g = [ia \ | \ jb] = \underline{x_{ij}x_{ab}}-x_{ib}x_{aj},\] where we underlined the respective leading terms. 
 It follows that, in the revlex order we are considering, at least one of the variables $x_{i\ell}$ and $x_{kj}$ is lower than $x_{k\ell}$. Analogously, one of the variables $x_{ib}$ and $x_{aj}$ is lower than $x_{ab}$.\\
If $a = k$ and $b=\ell$, then $f$ and $g$ are the same polynomial. If $a = \ell$ and $b = k$, then $f$ and $g$ have the same leading term and the S-polynomial of $f$ and $g$ is $x_{ik}x_{\ell j} - x_{i\ell}x_{kj} = [ij \ | \ k\ell]$, which is either zero (if $i=j$ or $k=\ell$) or reduces to zero.\\
For the rest of the proof assume that $\{k,\ell\} \neq \{a,b\}$, and thus $x_{k\ell} \neq x_{ab}$. Then the S-polynomial of $f$ and $g$ is \[S(f,g) = x_{ib}x_{aj}x_{k\ell} - x_{i\ell}x_{kj}x_{ab}.\]
If $a=k$ (and thus $b \neq \ell$), then $S(f,g) = x_{kj}(x_{ib}x_{k\ell}-x_{i\ell}x_{kb}) = x_{kj}\cdot [ik\ | \ \ell b]$ reduces to zero and we are done.\\
If $b=\ell$ (and thus $a \neq k$), then $S(f,g) = x_{i\ell}(x_{aj}x_{k\ell}-x_{a\ell}x_{kj}) = x_{i\ell} \cdot [ ka\ | \ j\ell]$ reduces to zero and we are done.\\
For the rest of the proof assume thus that $a\neq k$, $b \neq \ell$ and $(a,b) \neq (\ell,k)$. One of $x_{ib}, x_{aj}, x_{i\ell}, x_{kj}$ is the lowest variable among those appearing in $S(f,g)$: without loss of generality, say it is $x_{i\ell}$. Then the leading term of $S(f,g)$ is $x_{ib}x_{aj}x_{k\ell}$ and the leading term of the $2$-minor $[ik \ | \ \ell b] = x_{ib}x_{k\ell} - x_{i\ell}x_{kb}$ is $x_{ib}x_{k\ell}$. Using this $2$-minor, one can then reduce $S(f,g)$ to $x_{i\ell}(x_{kb}x_{aj}-x_{kj}x_{ab}) = x_{i\ell} \cdot [ka \ | \ jb]$, which reduces to zero.
\end{proof}

Since the ideal of $2$-minors of the generic symmetric matrix of size $n$ is isomorphic to the defining (toric) ideal of the second Veronese subalgebra of $\field[y_1, \ldots, y_n]$, \Cref{prop:symmetric revlex-universal 2-minors} recovers the fact that such an algebra is strongly Koszul with respect to the variables: see \cite[Proposition 2.3(a)]{HHR} and \cite[Theorem 13]{CDR}. In fact, we can say more:

\begin{theorem} \label{thm:2-minors}
    In the setting of \Cref{not:concawelker}, one has the following results.
    \begin{enumerate}
    \item For any bipartite graph $G$, the ideal $I_2(\Xgen_G)$ admits a tidy quadratic revlex-universal Gr\"obner basis, and hence defines a strongly Koszul algebra with respect to the variables.
    \item For any graph $H$, the ideal $I_2(\Xsym_H)$ admits a tidy quadratic revlex-universal Gr\"obner basis, and hence defines a strongly Koszul algebra with respect to the variables.
    \end{enumerate}
\end{theorem}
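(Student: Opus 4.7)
Both parts follow a uniform two-step pattern. First, handle the ``dense'' case in which no entries have been zeroed out: namely, show that the $2$-minors of $\Xgen$ (respectively of $\Xsym$) form a tidy quadratic revlex-universal Gr\"obner basis. Second, pass to the sparse version by observing that zeroing out certain entries of the matrix is the same as quotienting by the ideal generated by those variables, so \Cref{cor:revlex-UGB modulo variables} transfers the Gr\"obner-theoretic property to the sparse case. The strong Koszulness conclusion is then immediate from \Cref{thm:main theorem}.

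\textbf{Part (ii).} The dense case is exactly \Cref{prop:symmetric revlex-universal 2-minors}. Given a graph $H$ on $[n]$, let $Y \subseteq \Ssym$ be the set of variables zeroed out in $\Xsym_H$ and let $\pi\colon \Ssym \twoheadrightarrow \Ssym/Y$ be the projection. Setting the variables of $Y$ to zero inside a $2$-minor of $\Xsym$ produces precisely the corresponding $2$-minor of $\Xsym_H$ (or zero, if all terms are killed), so $\pi(I_2(\Xsym)) = I_2(\Xsym_H)$ as ideals of $\Ssym/Y$. Applying \Cref{cor:revlex-UGB modulo variables} to the Gr\"obner basis from \Cref{prop:symmetric revlex-universal 2-minors} yields a tidy quadratic revlex-universal Gr\"obner basis for $I_2(\Xsym_H)$, and \Cref{thm:main theorem} delivers strong Koszulness with respect to the variables.

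\textbf{Part (i).} The structure is identical, but we first need the generic analogue of \Cref{prop:symmetric revlex-universal 2-minors}: the $2$-minors of $\Xgen$ form a tidy quadratic revlex-universal Gr\"obner basis of $I_2(\Xgen)$. Tidiness is automatic because each $2$-minor $x_{ij}x_{kl} - x_{il}x_{kj}$ (with $i \neq k$, $j \neq l$) involves four distinct variables, so no variable divides both monomials. The Gr\"obner basis property is established by an S-pair computation in the spirit of \Cref{prop:symmetric revlex-universal 2-minors}, but simpler since in the non-symmetric case we never need to identify two index pairs. Concretely, if $f = x_{ij}x_{kl} - x_{il}x_{kj}$ and $g = x_{ij}x_{ab} - x_{ib}x_{aj}$ share $x_{ij}$ in their initial terms, a direct expansion gives
\[
S(f, g) \;=\; x_{kl}\,x_{ib}\,x_{aj} \;-\; x_{ab}\,x_{il}\,x_{kj} \;=\; x_{aj}\,(x_{ib}x_{kl} - x_{il}x_{kb}) \;+\; x_{il}\,(x_{aj}x_{kb} - x_{ab}x_{kj}),
\]
an expression in terms of the $2$-minors with rows $\{i,k\}$, columns $\{b, l\}$ and rows $\{a, k\}$, columns $\{j, b\}$ respectively. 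One then verifies, by an enumeration of which monomial is leading in each participating $2$-minor, that this identity constitutes a standard representation under every revlex order. With the generic dense case in hand, the passage to $I_2(\Xgen_G)$ for an arbitrary bipartite graph $G$ is again via \Cref{cor:revlex-UGB modulo variables} followed by \Cref{thm:main theorem}.

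\textbf{Main obstacle.} The only genuinely new work is the universality of the Gr\"obner basis in part (i): one must check the above S-pair reduction across every possible revlex ordering of the variables, keeping track of which monomials of the auxiliary $2$-minors are leading. This bookkeeping is tedious but entirely analogous to the one carried out in the proof of \Cref{prop:symmetric revlex-universal 2-minors}, and in fact is lighter because coincidences between index pairs that complicate the symmetric case cannot occur. Once that computation is written out, everything else is formal: \Cref{cor:revlex-UGB modulo variables} handles the sparsification and \Cref{thm:main theorem} delivers strong Koszulness with respect to the variables in both parts.
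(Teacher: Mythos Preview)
Your proposal is correct and follows the same two-step scheme as the paper: establish the dense case, then sparsify via \Cref{cor:revlex-UGB modulo variables} and conclude with \Cref{thm:main theorem}. The one difference is that the paper does not treat the dense generic case as ``genuinely new work'' at all: it simply cites \cite[Proposition 5.3.8]{BCRV} for the fact that the $2$-minors of $\Xgen$ form a (tidy) quadratic revlex-universal Gr\"obner basis, and moves on. Your S-pair sketch is fine as far as it goes, but note that coincidences can still occur in the generic case (one may have $a=k$ or $b=\ell$, yielding degenerate minors in your decomposition), so the bookkeeping is not quite as light as you suggest; since the result is already in the literature, there is no need to redo it.
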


Before moving to the proof of \Cref{thm:2-minors}, we note that the analogous statement for $4$-Pfaffians does not hold.

\begin{remark}
The $4$-Pfaffians of the generic $n \times n$ skew-symmetric matrix generate the defining ideal $I_n$ of the coordinate ring $R_n$ of the Grassmannian $\mathrm{Gr}(2,n)$. It is well-known that the $4$-Pfaffians themselves form a (tidy) Gr\"obner basis of quadrics for $I_n$ in any characteristic (see, e.g., \cite[Theorem 5.1]{HerzogTrung} or \cite[Theorem 2.1]{JonssonWelker} for two different term orders). The ring $R_4$ is defined by a single tidy quadratic polynomial and hence, by \Cref{cor:tidy revlex UGB}, is strongly Koszul with respect to the variables. However, as soon as $n \geq 5$, the ring $R_n$ is not strongly Koszul with respect to the variables; to see this, let $J := I_5 + (x_{23}, x_{35}, x_{45})$ and let $R'_5 := R_5/\overline{J}$. One checks that $0 :_{R'_5} \overline{x}_{24} = (\overline{x}_{13}\overline{x}_{15})$. Since the ideal $J$ is generated by monomials and differences of monomials, such an equality holds in any characteristic. An application of \Cref{lem:SK and quotients} then yields that $R_n$ is not strongly Koszul with respect to the variables for any $n \geq 5$.
\end{remark}

\begin{proof}[Proof of \Cref{thm:2-minors}]
 Note that the claims about the existence of a tidy quadratic revlex-universal Gr\"obner basis do not depend on the properties of the field $\field$. Indeed, since the generators of $I_2(\Xgen_G)$ and $I_2(\Xsym_H)$ are monomials and differences of monomials, the Buchberger algorithm works the same for any choice of $\field$.
\begin{enumerate}
\item It is known that the $2$-minors of the generic matrix $\Xgen$ form a (tidy) quadratic revlex-universal Gr\"obner basis for $I_2(\Xgen)$, see \cite[Proposition 5.3.8]{BCRV}. Since \[\frac{\Sgen}{I_2(\Xgen_G)} = \frac{\Sgen}{I_2(\Xgen) + (x_{ij} \mid \{i,\tilde{j}\} \in E(G))},\]
it follows from \Cref{cor:revlex-UGB modulo variables} that $I_2(\Xgen_G)$ also admits a tidy quadratic revlex-universal Gr\"obner basis. Thus, the ring defined by $I_2(\Xgen_G)$ is strongly Koszul with respect to the variables by \Cref{thm:main theorem}.

\item By \Cref{prop:symmetric revlex-universal 2-minors}, the $2$-minors of the generic symmetric matrix $\Xsym$ form a (tidy) quadratic revlex-universal Gr\"obner basis of $I_2(\Xsym)$. Noting that 
\[\frac{\Ssym}{I_2(\Xsym_H)} = \frac{\Ssym}{I_2(\Xsym) + (x_{ij} \mid \{i,j\} \in E(H))},\]
we finish the proof in the same way as part (i).
\end{enumerate}
\end{proof} 

\begin{remark}[the generic Hankel case] \label{rem:Hankel}
Let $S = \field[x_1, \ldots, x_{n+1}]$ and let $M$ be the generic Hankel matrix with $m$ rows and $n-m+2$ columns (with $2 \leq m \leq n$), i.e., the $(i,j)$-th entry of $M$ is $x_{i+j-1}$. It is known that $I_2(M) \cong I_2(N)$, where $N$ is the generic Hankel matrix with $2$ rows and $n$ columns (see, e.g., \cite{Watanabe}). Moreover, $S/I_2(N)$ is isomorphic to the $n$-th Veronese subalgebra of $\field[y,z]$, and thus is strongly Koszul with respect to the variables by \cite[Proposition 2.3(a)]{HHR} or \cite[Theorem 13]{CDR}.

However, the ideal $I_2(N)$ does not admit a quadratic revlex-universal Gr\"obner basis as soon as $n \geq 3$. To see this, just set to zero all the variables $x_j$ with $j \geq 4$ and note that $S/(I_2(N) + (x_j \mid j \geq 4)) \cong \field[x_1,x_2,x_3]/(x_1x_3-x_2^2, x_2x_3, x_3^2)$. As we saw in \Cref{rem:SK and no quadratic revlex-UGB}, the ideal $(x_1x_3-x_2^2, x_2x_3, x_3^2)$ does not admit a quadratic revlex-universal Gr\"obner basis. By \Cref{cor:revlex-UGB modulo variables}, the ideal $I_2(N)$ does not admit a quadratic revlex-universal Gr\"obner basis either.
\end{remark}

\begin{remark}
If $M$ is a matrix whose entries are variables and zeros, and no variable of $S$ appears more than once on each row and on each column of $M$, then the $2$-minors of $M$ form a tidy set (see \Cref{ex:tidy examples}(iii)). One might wonder whether this is the ``right'' setting for the results about strong Koszulness we have found so far. However, the ideals of $2$-minors of such matrices do not even define Koszul algebras in general! The case where the matrix $M$ has two rows was studied by Nguyen, Thieu and Vu in \cite{2xe}. It follows from their work that, if $S=\mathbb{C}[x,y,z,t]$ and \[M = \begin{bmatrix}0 & x & y & z\\x & y & 0 & t\end{bmatrix},\]
the ring $S/I_2(M)$ is not Koszul.
\end{remark}

\section{Maximal minors, maximal Pfaffians and apolarity} \label{sec:apolarity and strong Koszulness}

In this section we show that certain determinantal objects, when viewed through the looking-glass of Macaulay's inverse system, are a source of strongly Koszul algebras. This extends previous work by Shafiei \cite{Sha}, who focused on finding explicit generators and Gr\"obner bases for such apolar ideals.

\subsection{Macaulay's inverse system} \label{subsec:Macaulay's inverse system}
In this section we collect some facts about a tool that will come in handy in the rest of the paper: Macaulay's inverse system. Our treatment follows closely \cite[Section 3]{MasutiTozzo}.

\subsubsection{The contraction action} Let $\field$ be any field, let $n \in \NN$ and consider the polynomial ring $S = \field[x_1, \ldots, x_n]$. Let $\mathcal{D}_i := \mathrm{Hom}_\field(S_i, \field)$ for every $i \in \NN$, and consider $\mathcal{D} := \bigoplus_{i \in \NN}\mathcal{D}_i$ (in other words, $\mathcal{D}$ is the graded dual module of $S = \mathrm{Sym}^{\bullet}(\field^n)$). We call $\mathcal{D}$ the \emph{divided power algebra}, but for the present paper we will not need to specify its multiplicative structure; we refer the interested reader to, e.g., \cite[Appendix A2.4]{Eisenbud}. In what follows we will denote by $\{X_1, \ldots, X_n\}$ the $\field$-basis of $\mathcal{D}_1$ that is dual to the $\field$-basis $\{x_1, \ldots, x_n\}$ of $S_1$. The ring $\mathcal{D}$ admits an $S$-module structure, called the \emph{contraction} \emph{action}, defined for every $\boldsymbol{\alpha}, \boldsymbol\beta \in \NN^n$ by
\[\mathbf{x}^{\boldsymbol\alpha} \circ_{\mathrm{ctr}} \mathbf{X}^{\boldsymbol\beta} := \begin{cases}\mathbf{X}^{\boldsymbol\beta-\boldsymbol\alpha} & \textrm{ if }\boldsymbol\beta \geq \boldsymbol\alpha \textrm{ componentwise}\\ 0 & \textrm{ else}\end{cases}\]
and then extended $\field$-linearly.

Given an ideal $I$ of $S$ (or, in other words, an $S$-submodule $I$ of $S$), we can associate with it the $S$-submodule $I^{\perp}$ of $\mathcal{D}$ defined as
\[I^{\perp} := \{F \in \mathcal{D} \mid f \circ_{\mathrm{ctr}} F = 0 \textrm{ for every }f \in I\}.\]
(Note that $I^{\perp}$ needs not be finitely generated as an $S$-submodule of $\mathcal{D}$; for an explicit example, take $I = (x^2, xy) \subseteq S = \field[x,y]$.)

Vice versa, given a (not necessarily finitely generated) $S$-submodule $M$ of $\mathcal{D}$, we can associate with it the ideal $\mathrm{ann}_S(M)$ of $S$ defined as
\[\mathrm{ann}_S(M) := \{f \in S \mid f \circ_{\mathrm{ctr}} F = 0 \textrm{ for every }F \in M\}.\]

The ideal $\mathrm{ann}_S(M)$ is sometimes called the \emph{apolar ideal} to $M$. One can prove that $\mathrm{ann}_S(I^{\perp}) = I$ for every ideal $I \subseteq S$ and $\mathrm{ann}_S(M)^{\perp} = M$ for every $S$-submodule $M \subseteq \mathcal{D}$. A result dating back to Macaulay exploits this duality (which in modern terms can be seen as a special case of Matlis duality) to relate Artinian level quotients of $S$ to finitely generated $S$-submodules of $\mathcal{D}$. More precisely, $(-)^{\perp}$ and $\mathrm{ann}_S(-)$ yield a bijection between those homogeneous ideals $I \subseteq S$ such that $S/I$ is an Artinian level ring of socle degree $s$ and type $\tau$ and those graded $S$-submodules of $\mathcal{D}$ finitely generated by $\tau$ linearly independent homogeneous polynomials of degree $s$. In particular, Artinian Gorenstein quotients of $S$ correspond to cyclic $S$-submodules of $\mathcal{D}$.

When $M$ is graded, then so is the ring $S/\mathrm{ann}_S(M)$, and one can access the Hilbert function of the latter by noting that $\mathrm{HF}(S/\mathrm{ann}_S(M), s) = \dim_{\field}M_s$. Such a result can be proved putting together the proof of \cite[Proposition 2.5]{Geramita} (adapted to the contraction setting) and the fact that $(\mathrm{ann}_S(M))^{\perp} = M$.

\subsubsection{The differentiation action} \label{subsec:differentiation}
For this subsection, let $\field$ have characteristic zero, $S = \field[x_1, \ldots, x_n]$ and $\mathcal{S} = \field[X_1, \ldots, X_n]$. The ring $\mathcal{S}$ admits an $S$-module structure (different from the standard multiplicative one), called the \emph{differentiation action}, defined for every $\boldsymbol{\alpha}, \boldsymbol\beta \in \NN^n$ by
\[\mathbf{x}^{\boldsymbol\alpha} \circ_{\mathrm{diff}} \mathbf{X}^{\boldsymbol\beta} := \begin{cases}\alpha_1! \cdot \alpha_2! \cdot \ldots \cdot \alpha_n! \cdot \mathbf{X}^{\boldsymbol\beta-\boldsymbol\alpha} & \textrm{ if }\boldsymbol\beta \geq \boldsymbol\alpha \textrm{ componentwise}\\ 0 & \textrm{ else}\end{cases}\]
In other words, each $x_i$ acts as the differential operator $\frac{\partial}{\partial X_i}$. We can now set up the $(-)^{\perp}$ and $\mathrm{ann}_S(-)$ operators analogously to the previous subsection, but using $S$-submodules of $\mathcal{S}$ with respect to the differentiation action. Macaulay's duality stays in place just like in the previous setting.

\begin{remark} \label{rem:squarefree Macaulay in characteristic zero}
    Let $\mathrm{char}(\field) = 0$, let $F_1, \ldots, F_{\ell}$ be polynomials in $\mathcal{D}$ (respectively, in $\mathcal{S}$) obtained as $\field$-linear combinations of squarefree monomials of the same degree and let $M$ be the $S$-submodule of $\mathcal{D}$ (respectively, of $\mathcal{S}$) generated by $F_1, \ldots, F_{\ell}$. Since no powers appear in the supports of $F_1, \ldots, F_{\ell}$, no powers can appear in any element of $M$; as a consequence, $\mathrm{ann}_S(M)$ is the same polynomial ideal of $S$ regardless of whether we use the contraction action on $\mathcal{D}$ or the differentiation action on $\mathcal{S}$.
\end{remark}
\begin{remark} \label{rem:differentiation and contraction for squarefree monomials}
    Let $\field$ be a field of arbitrary characteristic. If $F \in \mathcal{D}$ is a $\field$-linear combination of squarefree monomials and $y$ is a variable of $S$, acting on $F$ via contraction by $y$ or via the differential operator $\frac{\partial}{\partial y}$ yields the same result. For this reason, with an abuse of notation, when acting on $\field$-linear combinations of squarefree monomials we will often use the differentiation notation even when $\field$ has positive characteristic. In particular, if $M$ is an $S$-submodule of $\mathcal{D}$ generated by homogeneous $\field$-linear combinations of squarefree monomials, we can compute the graded structure of $M$ and the Hilbert series of $S/\mathrm{ann}_S(M)$ by considering the $\field$-linear spans of the appropriate partial derivatives of the generators of $M$.
\end{remark}

\subsection{Maximal minors and apolarity}
\begin{notation}
    In this section, when $Y$ is a matrix with $m$ rows and $n$ columns, $\Ic \subseteq [m]$ and $\Jc \subseteq [n]$, we will denote by $Y_{\Ic,\Jc}$ the matrix obtained from $Y$ by deleting all rows indexed by elements of $\Ic$ and all columns indexed by elements of $\Jc$. 
\end{notation}

If $Z$ is a square matrix of size $n$ and $i \in [n]$, the Laplace expansion of the determinant of $Z$ with respect to the $i$-th row yields that
\begin{equation} \label{eq:Laplace expansion}
\det(Z) = \sum_{j=1}^{n}(-1)^{i+j} \cdot z_{ij} \cdot \det(Z_{\{i\},\{j\}}).
\end{equation}

Following \Cref{not:concawelker}, let $\field$ be any field and $\Xgen$ be a generic $m \times n$ matrix (where $m \leq n$) with entries in the polynomial ring $\Sgen = \field[x_{ij} \mid 1 \leq i \leq m, 1 \leq j \leq n]$. 

We recall a definition from \cite[Introduction]{HPPRS}:
\begin{definition}
A \emph{generalized $2 \times 2$-permanent} (or \emph{generalized $2$-permanent}) of $\Xgen$ is the permanent of a $2 \times 2$ submatrix \[\begin{bmatrix}x_{i_1,j_1} & x_{i_1,j_2}\\x_{i_2,j_1} & x_{i_2,j_2}\end{bmatrix}\] of $\Xgen$, where $i_1$ and $i_2$ are not necessarily distinct, and likewise for $j_1$ and $j_2$.
\end{definition}

Consider the $\Sgen$-module $M$ (with respect to the contraction action) finitely generated by the maximal minors of $\Xgen$, and let $I = \mathrm{ann}(M)$ be the apolar ideal to $M$. The ideal $I$ defines a Gorenstein ring when $m=n$ and a level non-Gorenstein ring when $m<n$. 
For $m=n$, Shafiei found the generators of $I$ (in terms of permanents and ``unacceptable monomials'') and showed they form a quadratic Gr\"obner basis \cite[Theorem 2.12, Theorem 5.3]{Sha}. We wish to prove that, for $m \leq n$, the generators of $I$ are up to multiplicative constant the generalized $2$-permanents of $\Xgen$\footnote{When the characteristic of $\field$ is zero, this fact can be quickly proved by representation-theoretic methods. We thank Alessio Sammartano for this observation.} when $\mathrm{char}(\field) \neq 2$, and the ideal $I$ defines a strongly Koszul algebra in any characteristic. To do so, let us first determine the Hilbert function of $\Sgen/I$. Thanks to the properties of Macaulay's inverse system and \Cref{rem:differentiation and contraction for squarefree monomials}, this can be gauged by analyzing the partial derivatives of the maximal minors of $\Xgen$. 

\begin{lemma} \label{lem:Hilbert generic}
    For every $0 \leq s \leq m$, one has that $M_s = \left\langle s\text{-minors of }\Xgen \right\rangle$ and, thus, $\HF(\Sgen/I, s) = \binom{m}{s}\binom{n}{s}$.
\end{lemma}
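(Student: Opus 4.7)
The plan is to pin down $M_s$ as a $\field$-vector space by showing both inclusions with the $\field$-span of the $s$-minors, then invoke Macaulay duality. First I would observe that, since $M$ is the $\Sgen$-submodule of $\mathcal{D}$ generated by the $m$-minors (which are homogeneous of degree $m$) and contraction by a degree-$d$ element of $\Sgen$ lowers $\mathcal{D}$-degree by $d$, the $\field$-vector space $M_s$ is spanned by elements of the form $f \circ_{\mathrm{ctr}} \Delta$ where $f$ is a monomial of degree $m-s$ in $\Sgen$ and $\Delta$ is an $m$-minor of $\Xgen$. The backbone of the argument is a Laplace-type identity: if $\Delta = \det(\Xgen_{\emptyset, [n] \setminus \mathcal{J}})$ is the $m$-minor on column set $\mathcal{J}$, then for any variable $x_{ij}$ the contraction $x_{ij} \circ_{\mathrm{ctr}} \Delta$ vanishes when $j \notin \mathcal{J}$, and otherwise equals, up to a sign, the $(m-1)$-minor on rows $[m] \setminus \{i\}$ and columns $\mathcal{J} \setminus \{j\}$. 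This follows by writing $\Delta$ as the signed sum of squarefree monomials $\prod_{k} x_{k, \sigma(k)}$ over bijections $\sigma : [m] \to \mathcal{J}$ and observing that contracting by $x_{ij}$ selects exactly those terms with $\sigma(i) = j$, which reassemble into the $i$-th row Laplace expansion \eqref{eq:Laplace expansion} of the displayed $(m-1)$-minor.

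Iterating this identity, the contraction of $\Delta$ by a monomial $x_{i_1 j_1} \cdots x_{i_{m-s} j_{m-s}}$ would yield either zero (when the $i_k$'s or the $j_k$'s are not all distinct, or when some $j_k \notin \mathcal{J}$, since any intermediate contraction then vanishes) or, up to a sign, the $s$-minor on rows $[m] \setminus \{i_1, \ldots, i_{m-s}\}$ and columns $\mathcal{J} \setminus \{j_1, \ldots, j_{m-s}\}$. This immediately gives the inclusion $M_s \subseteq \langle s\text{-minors of }\Xgen\rangle$. For the reverse inclusion, given any $s$-minor on row set $\mathcal{I} \subseteq [m]$ and column set $\mathcal{L} \subseteq [n]$, I would choose any injection $\phi : [m] \setminus \mathcal{I} \hookrightarrow [n] \setminus \mathcal{L}$, set $\mathcal{J} := \mathcal{L} \cup \phi([m] \setminus \mathcal{I})$, and contract the corresponding $m$-minor by $\prod_{i \in [m] \setminus \mathcal{I}} x_{i, \phi(i)}$; by the identity above, this recovers the chosen $s$-minor up to sign and places it in $M_s$.

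To conclude, I would invoke the $\field$-linear independence of the $s$-minors of $\Xgen$: the $s$-minor on rows $\{i_1 < \cdots < i_s\}$ and columns $\{j_1 < \cdots < j_s\}$ contains the distinguished diagonal term $\pm\, x_{i_1 j_1} \cdots x_{i_s j_s}$, and the unordered index pairs appearing in this squarefree monomial determine both row and column index sets, so no other $s$-minor contains the same monomial. Thus $\dim_\field M_s = \binom{m}{s}\binom{n}{s}$, and the Macaulay duality formula $\HF(\Sgen/I, s) = \dim_\field M_s$ recalled in \Cref{subsec:Macaulay's inverse system} finishes the proof. I do not expect any serious obstacle: the only finicky step is the sign bookkeeping in the iterated Laplace expansion, but the signs are immaterial here since we only need to track the $\field$-spans.
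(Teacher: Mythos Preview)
Your proposal is correct and follows essentially the same approach as the paper: both arguments compute $M_s$ by iterating the Laplace expansion \eqref{eq:Laplace expansion} to see that contracting a maximal minor by a degree-$(m-s)$ monomial yields either zero or (up to sign) an $s$-minor, then invoke the $\field$-linear independence of the $s$-minors and the Macaulay duality formula $\HF(\Sgen/I,s)=\dim_\field M_s$. Your version is slightly more explicit about the reverse inclusion and about why the $s$-minors are linearly independent, but the underlying idea is identical.
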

\begin{proof}
    Fix $0 \leq s \leq m$. Let $i_1, \ldots, i_{m-s} \in [m]$ (not necessarily distinct), $\Ic = \{i_1, \ldots, i_{m-s}\}$, $j_1, \ldots, j_{m-s} \in [n]$ (not necessarily distinct), $\Jc = \{j_1, \ldots, j_{m-s}\}$. Moreover, let $\Hc \subseteq [n]$ be such that $|\Hc|=n-m$, and note that $\Xgen_{\varnothing,\Hc}$ is a square matrix of size $m$. It is a direct consequence of the Laplace expansion \eqref{eq:Laplace expansion} that 
    \[
    \frac{\partial^{m-s} (\det(\Xgen_{\varnothing,\Hc}))}{\partial x_{i_1, j_1} \partial x_{i_2, j_2} \ldots \partial x_{i_{m-s}, j_{m-s}}} = \begin{cases}\pm \det(\Xgen_{\Ic, \Hc \cup \Jc}) & |\Ic|=m-s,\ |\Hc \cup \Jc|=n-s\\ 0 & \text{otherwise}.\end{cases}
    \]
    Thus, $M_s$ is generated as a $\field$-vector space by all the $s$-minors of $\Xgen$. Since the collection of $s$-minors of $\Xgen$ is $\field$-linearly independent, it follows that 
    \[\HF(\Sgen/I, s) = \dim_{\field}M_s = \binom{m}{s}\binom{n}{s}.\]
\end{proof}

\begin{proposition} \label{prop:generic apolarity}
    Let $M$ be the $\Sgen$-module (with respect to the contraction action) generated by the maximal minors of $\Xgen$ and let $I = \mathrm{ann}(M)$. Then the set $\mathcal{G}$ consisting of the following polynomials \begin{itemize}
    \item {\makebox[3cm]{$x_{ik}^2$\hfill}} for $i \in [m],\ k \in [n]$;
    \item {\makebox[3cm]{$x_{ik}x_{jk}$\hfill}} for $i,j \in [m], \ k \in [n], \ i \neq j$;
    \item {\makebox[3cm]{$x_{ik}x_{i\ell}$\hfill}} for $i \in [m],\ k,\ell \in [n],\ k \neq \ell$;
    \item {\makebox[3cm]{$x_{ik}x_{j\ell}+x_{i\ell}x_{jk}$\hfill}} for $i,j \in [m], \ k,\ell \in [n],\ i \neq j, \ k \neq \ell$
    \end{itemize} is a tidy revlex-universal Gr\"obner basis  of $\binom{m+1}{2}\binom{n+1}{2}$ quadrics for $I$. As a consequence, the ideal $I$ defines a strongly Koszul algebra with respect to the variables.
\end{proposition}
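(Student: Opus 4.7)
The plan is to verify membership $\mathcal{G}\subseteq I$ directly, then show that $\mathcal{G}$ is a revlex-universal Gr\"obner basis via a Hilbert-function sandwich, and finally invoke \Cref{thm:main theorem} to conclude strong Koszulness. Tidiness and the count $|\mathcal{G}|=mn+\binom{m}{2}n+m\binom{n}{2}+\binom{m}{2}\binom{n}{2}=\binom{m+1}{2}\binom{n+1}{2}$ are transparent from the explicit form of $\mathcal{G}$.

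To check $\mathcal{G}\subseteq I$, note first that every term of every maximal minor of $\Xgen$ is a squarefree product of $m$ variables lying in pairwise distinct rows and pairwise distinct columns. Hence the monomials $x_{ik}^2$, $x_{ik}x_{jk}$ ($i\neq j$), and $x_{ik}x_{i\ell}$ ($k\neq \ell$) each annihilate every maximal minor via contraction. For the binomial $x_{ik}x_{j\ell}+x_{i\ell}x_{jk}$, I would switch to the differentiation action (allowed by \Cref{rem:differentiation and contraction for squarefree monomials} since maximal minors are squarefree) and run the cofactor-expansion computation from the proof of \Cref{lem:Hilbert generic}: both mixed partials applied to any maximal minor $\det(\Xgen_{\varnothing,\Hc})$ yield either zero, when $\{k,\ell\}\not\subseteq [n]\setminus \Hc$, or the same complementary $(m-2)$-minor up to a global sign. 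The two signs differ because the permutations giving nonzero contributions to the two partials are related by the transposition $(i\, j)$, so the two terms cancel.

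Next, fix an arbitrary total ordering of the variables of $\Sgen$ and let $\rev$ denote the induced revlex order. The leading term of each monomial of $\mathcal{G}$ is itself, while a direct revlex computation shows that the leading term of each binomial $x_{ik}x_{j\ell}+x_{i\ell}x_{jk}$ is the unique monomial whose support does \emph{not} contain the $\rev$-smallest of the four corner variables $x_{ik},x_{i\ell},x_{jk},x_{j\ell}$. Setting $J=(\mathcal{G})$ and letting $L$ be the monomial ideal generated by these leading terms, a monomial is standard modulo $L$ exactly when it is squarefree, uses variables in pairwise distinct rows and columns, and its restriction to every pair of matched rows and columns contains the minimum of the four corresponding corner variables. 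Each such standard monomial of degree $s$ is supported on a unique pair $(I',K')$ of $s$-element subsets of $[m]$ and $[n]$; the central combinatorial claim is that for each such pair there is exactly one standard monomial supported on $I'\times K'$.

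I expect the uniqueness claim to be the main obstacle, and I would prove it by a greedy matching argument. If $\nu$ is any standard matching on $I'\times K'$, then letting $x_{i^*k^*}$ be the $\rev$-smallest variable in $I'\times K'$, one shows that $\nu(i^*)=k^*$: indeed, otherwise the $2\times 2$ sub-pattern on row $i^*$, column $k^*$, and the row/column that $\nu$ matches with them would miss the corner minimum. Iterating on $(I'\setminus\{i^*\})\times(K'\setminus\{k^*\})$ forces $\nu$ to coincide with the matching produced by the greedy algorithm (repeatedly pick the smallest available variable and delete its row and column). Conversely, this greedy matching satisfies every $2\times 2$ constraint: for any two matched pairs $(i_a,\mu(i_a))$ and $(i_b,\mu(i_b))$, the one picked earlier is chosen while both rows $i_a,i_b$ and both columns $\mu(i_a),\mu(i_b)$ are still available, so its variable is smaller than the other three corners of the sub-pattern. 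Counting gives exactly $\binom{m}{s}\binom{n}{s}$ standard monomials in degree $s$, so $\dim_\field(\Sgen/J)_s\leq \dim_\field(\Sgen/L)_s=\binom{m}{s}\binom{n}{s}$. Since $J\subseteq I$ and $\dim_\field(\Sgen/I)_s=\binom{m}{s}\binom{n}{s}$ by \Cref{lem:Hilbert generic}, all inequalities are equalities, forcing $J=I$ and $L=\init_{\rev}(J)$. Hence $\mathcal{G}$ is a Gr\"obner basis with respect to every revlex order, and strong Koszulness with respect to the variables follows from \Cref{thm:main theorem}.
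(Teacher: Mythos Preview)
Your proposal is correct and follows essentially the same approach as the paper's proof: both verify $\mathcal{G}\subseteq I$ directly, then establish the Gr\"obner basis property for an arbitrary revlex order by a Hilbert-function comparison, the core of which is the same greedy/lowest-variable argument showing that each pair of $s$-subsets of rows and columns supports exactly one standard monomial. The only cosmetic differences are that the paper phrases the combinatorics in the language of the flag simplicial complex associated with the squarefree part of the initial ideal, and it first pins down $I_2=\langle\mathcal{G}\rangle$ by a dimension count before the Hilbert-function argument, whereas you run the sandwich $J\subseteq I$ in one step.
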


\begin{proof}
By \Cref{lem:Hilbert generic} we know that $M_2$ is spanned by the $2$-minors of $\Xgen$ and the quadratic part of the ideal $I$ has $\binom{mn+1}{2} - \binom{m}{2}\binom{n}{2} = \binom{m+1}{2}\binom{n+1}{2}$ quadratic generators. Such generators need to be orthogonal to the $2$-minors of $\Xgen$ with respect to the contraction pairing. One checks by direct inspection that the $\binom{m+1}{2}\binom{n+1}{2}$-many linearly indepedent polynomials in $\mathcal{G}$ satisfy this property. Thus, the quadratic part of $I$ is minimally generated by such (tidy) polynomials. We now want to prove that $I$ is actually generated in degree two, and the set $\mathcal{G}$ is in fact a revlex-universal Gr\"obner basis for $I$.

Fix any total order on the variables of $\Sgen$, let $\rev$ be the associated revlex order and consider the monomial ideal $U = (\init_{\rev}(g) \mid g \in \mathcal{G})$. To finish the proof, it is enough to show that the Hilbert functions of $\Sgen/U$ and $\Sgen/I$ coincide, i.e., by \Cref{lem:Hilbert generic}, that $\HF(\Sgen/U,s) = \binom{m}{s}\binom{n}{s}$ for every $0 \leq s \leq m$. Since $U$ can be written as $U' + (x_{ik}^2 \mid i \in [m],\ k \in [n])$, one has that the Hilbert function of $\Sgen/U$ coincides with the $f$-vector of the simplicial complex $\Delta$ whose Stanley--Reisner ideal is $U'$. Note that the complex $\Delta$ is flag, i.e., its Stanley--Reisner ideal is quadratically generated. We need to show that, for any fixed $0 \leq s \leq m$, the simplicial complex $\Delta$ has precisely $\binom{m}{s}\binom{n}{s}$ faces of cardinality $s$, i.e., of dimension $s-1$.

Since the monomials $x_{ik}x_{i\ell}$ and $x_{ik}x_{jk}$ are nonfaces of $\Delta$, it follows that in any $(s-1)$-dimensional face $x_{i_1,k_1}x_{i_2,k_2}\ldots x_{i_s,k_s}$ of $\Delta$ the indices $i_1, \ldots, i_s$ are all distinct, and likewise for $k_1, \ldots, k_s$. Now note that, if we pick two distinct indices $i,j$ in $[m]$ and two distinct indices $k,\ell$ in $[n]$, exactly one of $x_{ik}x_{j\ell}$ and $x_{i\ell}x_{jk}$ is an edge of $\Delta$: more precisely, the one containing the lowest variable among $x_{ik}, x_{i\ell}, x_{jk}, x_{j\ell}$.
    
We claim that choosing a set $\Ic$ of $s$ distinct indices in $[m]$ and a set $\Kc$ of $s$ distinct indices in $[n]$ gives rise to precisely one face of $\Delta$. This will prove that $\Delta$ has precisely $\binom{m}{s}\binom{n}{s}$ faces of dimension $s-1$, thus concluding the proof.

    Consider the monomial $\boldsymbol{m} = x_{i_1,k_1}x_{i_2,k_2}\ldots x_{i_s,k_s}$, where $\Ic = \{i_1,i_2, \ldots, i_s\}$ and $\Kc = \{k_1,k_2,\ldots, k_s\}$ both consist of $s$ distinct elements, and assume without loss of generality that $x_{i_1,k_1} < x_{i_2,k_2} < \ldots < x_{i_s,k_s}$. For any $r \in [s]$, let $\mathcal{V}_{\geq r}$ denote the set consisting of the variables $x_{ik}$ of $\Sgen$ with $i \in \{i_r,i_{r+1}, \ldots, i_s\}$ and $k \in \{k_r,k_{r+1}, \ldots, k_s\}$. 
    
    Assume there exists $r \in \{1, 2, \ldots, s-1\}$ such that $x_{i_r,k_r}$ is not the lowest variable $x_{j,\ell}$ in $\mathcal{V}_{\geq r}$. Since $x_{j,\ell} < x_{i_r, k_r} < x_{i_t, k_t}$ for any $t > r$, the indices $j \in \{i_r,i_{r+1}, \ldots, i_s\}$ and $\ell \in \{k_r,k_{r+1}, \ldots, k_s\}$ must appear in two distinct variables $x_{j,\ell'}$ and $x_{j',\ell}$ of $\boldsymbol{m}$, where $j' \in \{i_r,i_{r+1}, \ldots, i_s\} \setminus \{j\}$ and $\ell' \in \{k_r,k_{r+1}, \ldots, k_s\} \setminus \{\ell\}$. It follows from the previous discussion about edges of $\Delta$ that $x_{j,\ell'}x_{j',\ell}$ is a nonface of $\Delta$, and thus so is $\boldsymbol{m}$.

    Now note that there exists a unique way to order the elements of $\Ic$ and $\Kc$ so that the variable $x_{i_t,k_t}$ is the lowest in $\mathcal{V}_{\geq t}$ for every $t \in \{1, 2, \ldots, s\}$. When such orderings of $\Ic$ and $\Kc$ are chosen, the monomial $x_{i_a,k_a}x_{i_b,k_b}$ is an edge of $\Delta$ for any possible choice of distinct $a, b$ in $\{1, 2, \ldots, s\}$. Since $\Delta$ is a flag simplicial complex, this implies that $\boldsymbol{m}$ itself is a face of $\Delta$, thus concluding the proof.
\end{proof}

\begin{proposition} \label{prop:generalized permanents}
    The generalized $2$-permanents of the generic $(m \times n)$-matrix $\Xgen$ define a strongly Koszul algebra with respect to the variables.
\end{proposition}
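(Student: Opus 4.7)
The plan is to enumerate the generalized $2$-permanents of $\Xgen$ explicitly, observe that the resulting ideal takes a shape that has already been studied (but differs according to whether $\mathrm{char}(\field) = 2$), and then invoke a previous result in each case. Recall that a generalized $2$-permanent is the permanent of a $2 \times 2$ submatrix $\bigl[\begin{smallmatrix}x_{i_1,j_1} & x_{i_1,j_2}\\ x_{i_2,j_1} & x_{i_2,j_2}\end{smallmatrix}\bigr]$ with $i_1, i_2 \in [m]$ and $j_1, j_2 \in [n]$ not necessarily distinct. Splitting according to which of the pairs $(i_1, i_2)$ and $(j_1, j_2)$ consists of distinct elements, I expect to show that the generalized $2$-permanents are, up to nonzero scalars, precisely the polynomials $2x_{ik}^2$, $2x_{ik}x_{i\ell}$ (with $k \neq \ell$), $2x_{ik}x_{jk}$ (with $i \neq j$), and $x_{ik}x_{j\ell} + x_{i\ell}x_{jk}$ (with $i \neq j$, $k \neq \ell$).

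If $\mathrm{char}(\field) \neq 2$, dividing the first three families by $2$ shows that the ideal of generalized $2$-permanents is exactly the ideal $I$ from \Cref{prop:generic apolarity}. That proposition then supplies a tidy quadratic revlex-universal Gr\"obner basis and, via \Cref{thm:main theorem}, yields strong Koszulness with respect to the variables. If $\mathrm{char}(\field) = 2$, the first three families of generators become zero while each $x_{ik}x_{j\ell} + x_{i\ell}x_{jk}$ coincides with the $2$-minor $x_{ik}x_{j\ell} - x_{i\ell}x_{jk}$; hence the ideal collapses to $I_2(\Xgen)$, and strong Koszulness with respect to the variables follows from \Cref{thm:2-minors}(i) applied to the empty bipartite graph on $[m] \sqcup [\tilde n]$.

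The only mildly technical point in this plan is the combinatorial bookkeeping needed to guarantee that the explicit list above exhausts all generalized $2$-permanents (so that the ideal is correctly identified in each characteristic). Once this enumeration is in place, no obstacle is expected and the statement follows directly from the two cited results.
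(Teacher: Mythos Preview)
Your proposal is correct and follows essentially the same argument as the paper: split on whether $\mathrm{char}(\field)=2$, identify the ideal of generalized $2$-permanents with the ideal $I$ of \Cref{prop:generic apolarity} when $\mathrm{char}(\field)\neq 2$ and with $I_2(\Xgen)$ when $\mathrm{char}(\field)=2$, and then invoke \Cref{prop:generic apolarity} and \Cref{thm:2-minors}(i) respectively. The explicit enumeration you give is exactly what underlies the paper's identification ``up to a nonzero multiplicative constant'' in the first case and the observation that the nonzero generalized $2$-permanents are the $2$-minors in the second.
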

\begin{proof}
If $\mathrm{char}(\field) \neq 2$, the generalized $2$-permanents of $\Xgen$ coincide (up to a nonzero multiplicative constant) with the polynomials in $\mathcal{G}$ from \Cref{prop:generic apolarity}, and thus the claim follows directly from \Cref{prop:generic apolarity}.

If $\mathrm{char}(\field) = 2$, the set of (nonzero) generalized $2$-permanents coincides with the set of $2$-minors of $\Xgen$, and thus the claim follows from \Cref{thm:2-minors}(i), taking as $G$ the edgeless bipartite graph.
\end{proof}

\begin{remark}
One can start from the maximal \emph{permanents} of $\Xgen$ and obtain a result very similar to \Cref{prop:generic apolarity} (of course, if $\mathrm{char}(\field)=2$, one gets literally the same result). Indeed, since permanents also satisfy the Laplace expansion rule, for every $0 \leq s \leq m$ one has that the $s$-th graded part of the $\Sgen$-module (with respect to the contraction action) generated by maximal permanents is minimally generated as a $\field$-vector space by the $s$-permanents of $\Xgen$, and thus $\HF(\Sgen/I, s) = \binom{m}{s}\binom{n}{s}$. Reasoning as in the determinantal case, one gets that the quadratic part of $I$ is minimally generated by the following $\binom{m+1}{2}\binom{n+1}{2}$ quadrics:
\begin{itemize}
    \item {\makebox[3cm]{$x_{ik}^2$\hfill}} for $i \in [m],\ k \in [n]$;
    \item {\makebox[3cm]{$x_{ik}x_{jk}$\hfill}} for $i,j \in [m], \ k \in [n], \ i \neq j$;
    \item {\makebox[3cm]{$x_{ik}x_{i\ell}$\hfill}} for $i \in [m],\ k,\ell \in [n],\ k \neq \ell$;
    \item {\makebox[3cm]{$x_{ik}x_{j\ell}-x_{i\ell}x_{jk}$\hfill}} for $i,j \in [m], \ k,\ell \in [n],\ i \neq j, \ k \neq \ell$
    \end{itemize}
When taking the ideal generated by the initial forms with respect to a given revlex monomial order $\rev$, we get back precisely what we called $U$ in the determinantal case, and we have already shown that $\HF(\Sgen/U,s) = \binom{m}{s}\binom{n}{s}$ for every $0 \leq s \leq m$. This proves that the polynomials listed above form a tidy revlex-universal Gr\"obner basis of quadrics for $I$, and hence the ideal $I$ defines a strongly Koszul algebra with respect to the variables. Like in the previous determinantal case, Shafiei studied the case when $\Xgen$ is square, determining the generators of $I$ and stating they form a quadratic Gr\"obner basis \cite[Theorem 2.13, Corollary 5.4]{Sha}.
\end{remark}

\subsection{Maximal Pfaffians and apolarity}

\begin{notation}
Let $Y$ be any skew-symmetric matrix of size $m$. In this section, if $\Hc \subseteq [m]$, we will denote by $Y_{\Hc}$ the skew-symmetric matrix obtained from $Y$ by deleting all the rows and columns indexed by elements of $\Hc$.
\end{notation}

If $Z$ is a skew-symmetric matrix of even size $2m'$ and $1 \leq j \leq 2m'$, it is known that the following useful ``Laplace-like'' formula holds (see, e.g., \cite[Equation D.1]{FultonPragacz}):

\begin{equation} \label{eq:Pfaffian expansion}
\pf(Z) = \sum_{i<j}(-1)^{i+j-1}\cdot z_{ij} \cdot \pf(Z_{\{i,j\}}) + \sum_{i>j}(-1)^{i+j}\cdot z_{ij} \cdot \pf(Z_{\{i,j\}}),
\end{equation}
where $\pf(\text{--})$ denotes the maximal Pfaffian of a skew-symmetric matrix of even size.

Let $\Xsk$ be a generic $N \times N$ skew-symmetric matrix with entries in the polynomial ring $\Ssk = \field[x_{ij} \mid 1 \leq i < j \leq N]$. Let $n = \left\lfloor\frac{N}{2}\right\rfloor$ and consider the $\Ssk$-module $M$ (with respect to the contraction action) finitely generated by the $2n$-Pfaffians of $X$. Note that such Pfaffians are $\field$-linear combinations of squarefree monomials, as can be derived for instance from \eqref{eq:Pfaffian expansion}. The ideal $I = \mathrm{ann}(M)$ defines a Gorenstein ring when $N$ is even and a level non-Gorenstein ring when $N$ is odd; its generators were found by Shafiei in the case when $N$ is even \cite[Theorem 4.12]{Sha}. It is our next goal to write down explicitly the generators of $I$ for any $N$ and to prove that they define a strongly Koszul algebra. To do so, let us first determine the Hilbert function of $\Ssk/I$. Thanks to the properties of Macaulay's inverse system and \Cref{rem:differentiation and contraction for squarefree monomials}, this can be gauged by analyzing the partial derivatives of the $2n$-Pfaffians of $\Xsk$. 

\begin{lemma} \label{lem:Hilbert Pfaffian}
    For every $0 \leq s \leq n$, one has that $M_s = \left\langle 2s\text{-Pfaffians of }\Xsk\right\rangle$ and, thus, that $\HF(\Ssk/I, s) = \binom{N}{2s}$.
\end{lemma}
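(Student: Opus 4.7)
The plan is to mirror the proof of \Cref{lem:Hilbert generic}, with the Pfaffian Laplace expansion \eqref{eq:Pfaffian expansion} playing the role of the determinantal expansion \eqref{eq:Laplace expansion}. Since the $2n$-Pfaffians are $\field$-linear combinations of squarefree monomials, \Cref{rem:differentiation and contraction for squarefree monomials} lets me replace contraction by ordinary differentiation when describing the graded pieces of $M$. Thus $M_s$ is the $\field$-linear span of the $(n-s)$-fold partial derivatives of the generators $\pf(\Xsk_{\mathcal{H}})$, as $\mathcal{H}$ ranges over subsets of $[N]$ with $|\mathcal{H}| = N - 2n$ and the derivatives are taken with respect to all choices of $n-s$ variables $x_{i_k j_k}$ with $i_k < j_k$.

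First I would establish that each such derivative is either zero or, up to sign, a $2s$-Pfaffian of $\Xsk$. A single application of \eqref{eq:Pfaffian expansion} gives
\[
\frac{\partial}{\partial x_{ij}} \pf(\Xsk_{\mathcal{H}}) = \pm \pf(\Xsk_{\mathcal{H} \cup \{i,j\}})
\]
whenever $\{i,j\} \cap \mathcal{H} = \emptyset$, and zero otherwise (since then $x_{ij}$ does not appear in $\pf(\Xsk_{\mathcal{H}})$). Iterating this $n-s$ times yields
\[
\frac{\partial^{n-s} \pf(\Xsk_{\mathcal{H}})}{\partial x_{i_1 j_1} \cdots \partial x_{i_{n-s} j_{n-s}}} = \pm \pf(\Xsk_{\mathcal{H} \cup \{i_1, j_1, \ldots, i_{n-s}, j_{n-s}\}})
\]
whenever the $2(n-s)$ indices are pairwise distinct and disjoint from $\mathcal{H}$, and vanishes otherwise (at some intermediate step one would differentiate a Pfaffian in a variable whose row/column index has already been deleted, and a Pfaffian is linear in each entry of the underlying matrix). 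Conversely, every $2s$-Pfaffian $\pf(\Xsk_{\mathcal{L}})$ arises in this way by choosing some $\mathcal{H} \subseteq \mathcal{L}$ with $|\mathcal{H}| = N-2n$ and pairing up the elements of $\mathcal{L} \setminus \mathcal{H}$. This proves the first claim, namely $M_s = \langle 2s\text{-Pfaffians of } \Xsk \rangle$.

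To conclude, I would argue that the $\binom{N}{2s}$ distinct $2s$-Pfaffians of $\Xsk$ are $\field$-linearly independent by expanding each of them as a signed sum over the perfect matchings of its underlying index set. The monomial support of $\pf(\Xsk_{\mathcal{L}})$ consists of squarefree products of variables $x_{ab}$ whose indices jointly partition $[N] \setminus \mathcal{L}$; hence Pfaffians attached to distinct subsets have disjoint monomial supports and cannot cancel in any linear relation. The identity $\HF(\Ssk/I, s) = \dim_{\field} M_s = \binom{N}{2s}$ then follows. I do not expect a real obstacle: the argument is essentially bookkeeping with the Pfaffian Laplace expansion and mirrors the determinantal case, the only mildly delicate point being the tracking of signs, which in any case plays no role in either spanning or linear independence.
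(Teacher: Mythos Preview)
Your proposal is correct and follows essentially the same route as the paper: both apply the Pfaffian Laplace expansion \eqref{eq:Pfaffian expansion} iteratively to identify the $(n-s)$-fold partial derivatives of the generating Pfaffians as either zero or $\pm$ a $2s$-Pfaffian, and then use linear independence of the $2s$-Pfaffians to read off the Hilbert function. The only minor differences are that you handle the even and odd $N$ cases uniformly via $|\mathcal{H}|=N-2n$, whereas the paper splits them, and that you supply an explicit argument for linear independence (disjoint monomial supports), which the paper simply asserts.
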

\begin{proof}
    Fix $0 \leq s \leq n$, let $i_1, \ldots, i_{n-s}, j_1, \ldots, j_{n-s}$ be elements of $[N]$ such that $i_k < j_k$ for every $k \in [n-s]$, and denote by $\Hc$ the set $\{i_1, \ldots, i_{n-s}, j_1, \ldots, j_{n-s}\}$. Let us first consider the case when $N = 2n$. It is a direct consequence of \eqref{eq:Pfaffian expansion} that 
    \[
    \frac{\partial^{n-s} (\pf(\Xsk))}{\partial x_{i_1, j_1} \partial x_{i_2, j_2} \ldots \partial x_{i_{n-s}, j_{n-s}}} = \begin{cases}0 & |\Hc|<2(n-s) \\ \pm \pf(\Xsk_{\Hc}) & |\Hc|=2(n-s),\end{cases}
    \]
    and thus $M_s$ is generated as a $\field$-vector space by the $2s$-Pfaffians of $\Xsk$.

    The case when $N = 2n+1$ is similar, but this time \[M_s = \left\langle \partial^{\boldsymbol{\alpha}}\pf(\Xsk_{\{\ell\}}) \mid \ell \in [2n+1], \ |\boldsymbol{\alpha}|=n-s \right\rangle.\]

    It follows again from \eqref{eq:Pfaffian expansion} that 
    \[
    \frac{\partial^{n-s} (\pf(\Xsk_{\{\ell\}}))}{\partial x_{i_1, j_1} \partial x_{i_2, j_2} \ldots \partial x_{i_{n-s}, j_{n-s}}} = \begin{cases}0 & |\Hc \cup \{\ell\}|<2(n-s)+1 \\ \pm \pf(\Xsk_{\Hc \cup \{\ell\}}) & |\Hc \cup \{\ell\}|=2(n-s)+1,\end{cases}
    \]
    and thus $M_s$ is generated as a $\field$-vector space by the $2s$-Pfaffians of $\Xsk$.
    
    In both cases, since the collection of $2s$-Pfaffians of $\Xsk$ is a $\field$-linearly independent set, it follows that \[\HF(\Ssk/I, s) = \dim_{\field}M_s = \binom{N}{2s}.\]
\end{proof}

\begin{proposition} \label{prop:Pfaffian apolarity}
    Let $M$ be the $\Ssk$-module (with respect to the contraction action) generated by the maximal even Pfaffians of $\Xsk$ and let $I = \mathrm{ann}(M)$. Then \[\begin{split}\mathcal{G} = \!\ &\{x_{ij}^2 \mid 1 \leq i < j \leq N\} \cup \{x_{ij}x_{ik},\ x_{ij}x_{jk}, \ x_{ik}x_{jk} \mid 1 \leq i < j < k \leq N\}\\ \cup\ \! &\{x_{ij}x_{k\ell}+x_{ik}x_{j\ell},\ x_{i\ell}x_{jk}+x_{ik}x_{j\ell},\ x_{ij}x_{k\ell} - x_{i\ell}x_{jk} \mid 1 \leq i < j < k < \ell \leq N\} \end{split}\] is a tidy revlex-universal Gr\"obner basis of $\binom{N}{2} + 3\binom{N}{3} + 3\binom{N}{4}$ quadrics for $I$. Removing one polynomial from each of the sets $\{x_{ij}x_{k\ell}+x_{ik}x_{j\ell},\ x_{i\ell}x_{jk}+x_{ik}x_{j\ell},\ x_{ij}x_{k\ell} - x_{i\ell}x_{jk}\}$ yields a minimal generating set for $I$.
\end{proposition}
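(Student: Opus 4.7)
The plan is to mirror the strategy of \Cref{prop:generic apolarity}. First I would verify the containment $\mathcal{G} \subseteq I$ by contracting each polynomial against the basis of $M_2$ given by \Cref{lem:Hilbert Pfaffian}, namely the 4-Pfaffians $\pf(\Xsk_\Hc) = X_{ij}X_{k\ell} - X_{ik}X_{j\ell} + X_{i\ell}X_{jk}$ with $\{i<j<k<\ell\} = [N] \setminus \Hc$. The squares $x_{ij}^2$ and the three-index products $x_{ij}x_{ik}$, $x_{ij}x_{jk}$, $x_{ik}x_{jk}$ annihilate every Pfaffian because Pfaffians are multilinear and supported on products of disjoint transpositions. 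Each of the four-index quadrics $p_1 := x_{ij}x_{k\ell}+x_{ik}x_{j\ell}$, $p_2 := x_{i\ell}x_{jk}+x_{ik}x_{j\ell}$, $p_3 := x_{ij}x_{k\ell}-x_{i\ell}x_{jk}$ contracts to zero against the 4-Pfaffian on $\{i,j,k,\ell\}$ by inspection, and against every other 4-Pfaffian trivially. From \Cref{lem:Hilbert Pfaffian} one computes $\dim_\field I_2 = \binom{\binom{N}{2}+1}{2} - \binom{N}{4} = \binom{N}{2} + 3\binom{N}{3} + 2\binom{N}{4}$; since $p_3 = p_1 - p_2$ and any two of $\{p_1, p_2, p_3\}$ are linearly independent, the family obtained from $\mathcal{G}$ by deleting one quadric per 4-subset is a basis of $I_2$, which establishes the minimal generating set claim.

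For the Gr\"obner basis property, fix any revlex order $\rev$ on $\Ssk$ and set $U := (\init_\rev(g) \mid g \in \mathcal{G})$. Since $\mathcal{G}$ contains all squares of variables, $U$ is a squarefree monomial ideal generated in degree two, so $\Ssk/U$ is the Stanley--Reisner ring of a flag simplicial complex $\Delta$ whose vertices are identified with the edges of $K_N$. The three-index quadrics ensure that every face of $\Delta$ is a matching in $K_N$. On each 4-subset $\{i,j,k,\ell\}$, each initial $\init_\rev(p_m)$ is the larger of the two monomials of $p_m$, and a short case analysis shows that the set $\{\init_\rev(p_1), \init_\rev(p_2), \init_\rev(p_3)\}$ coincides with the two largest elements (in $\rev$) among the three products $x_{ij}x_{k\ell}$, $x_{ik}x_{j\ell}$, $x_{i\ell}x_{jk}$. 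Hence exactly one of the three perfect matchings of $\{i,j,k,\ell\}$ is an edge of $\Delta$, namely the one with smallest product in $\rev$. By \Cref{lem:Hilbert Pfaffian}, to finish it suffices to prove $f_{s-1}(\Delta) = \binom{N}{2s}$ for every $s$.

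To count the $(s-1)$-faces of $\Delta$ I would build a bijection with $2s$-subsets of $[N]$. For $T \subseteq [N]$ with $|T| = 2s$, define a matching $M(T)$ greedily by repeatedly selecting the edge $\{a, b\}$ corresponding to the smallest variable $x_{ab}$ with $a, b$ still unmatched, then removing $\{a, b\}$ from the unmatched vertices. If $e_a, e_b \in M(T)$ with $e_a$ chosen first, then by construction $x_{e_a}$ is the smallest variable whose indices both lie in the unmatched set at step $a$, which contains $e_a \cup e_b$; so $x_{e_a}$ is the smallest variable with both indices in $e_a \cup e_b$, making $\{e_a, e_b\}$ the matching of smallest product on that 4-subset. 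Flagness of $\Delta$ then implies $M(T)$ is a face. Conversely, any face $M$ on vertex set $T$ must contain the first greedy edge (otherwise the corresponding 4-subset restriction would violate the initial-ideal condition derived above), and recursion yields $M = M(T)$. Therefore $f_{s-1}(\Delta) = \binom{N}{2s} = \HF(\Ssk/I, s)$, proving that $\mathcal{G}$ is a Gr\"obner basis with respect to $\rev$; arbitrariness of $\rev$ together with the visible tidiness of $\mathcal{G}$ concludes the proof. The main obstacle is this combinatorial bijection: verifying both that the greedy matching is always a face and that it is the unique face on its vertex set requires careful use of the 4-subset condition sketched above.
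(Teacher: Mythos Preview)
Your proposal is correct and follows essentially the same approach as the paper's proof: both verify $\mathcal{G}\subseteq I$ via contraction against the $4$-Pfaffians from \Cref{lem:Hilbert Pfaffian}, reduce the Gr\"obner basis claim to a face count on the flag complex $\Delta$ associated to $U$, observe that on each $4$-subset exactly one of the three perfect matchings survives (the one containing the lowest variable, equivalently the one with smallest revlex product), and then establish the bijection between $(s-1)$-faces and $2s$-subsets of $[N]$ via the greedy matching you describe. The paper phrases the bijection as ``the unique ordering of $\Hc$ with $x_{i_{2t-1},i_{2t}}$ lowest in $\mathcal{V}_{\geq 2t-1}$'', which is exactly your greedy construction written differently.
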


\begin{corollary} \label{cor:Pfaffian apolarity}
    Let $M$ be the $\Ssk$-module (with respect to the contraction action) generated by the maximal even Pfaffians of $\Xsk$. Then the ideal $I = \mathrm{ann}(M)$ defines a strongly Koszul algebra with respect to the variables.
\end{corollary}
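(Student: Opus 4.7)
The plan is to deduce the corollary as an immediate consequence of the preceding proposition and the main technical result of Section~3. By \Cref{prop:Pfaffian apolarity} (which I would invoke as a black box), the explicit set $\mathcal{G}$ listed there is a revlex-universal Gr\"obner basis of quadrics for $I$. To apply \Cref{thm:main theorem}, I only need to confirm that $\mathcal{G}$ is tidy in the sense of \Cref{def:tidy}.

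First I would verify tidiness by inspection of the three types of elements in $\mathcal{G}$. The monomial generators $x_{ij}^2$, $x_{ij}x_{ik}$, $x_{ij}x_{jk}$, $x_{ik}x_{jk}$ are monomials and hence vacuously tidy. For the binomial generators indexed by four distinct indices $1 \le i < j < k < \ell \le N$, the four variables $x_{ij}, x_{k\ell}, x_{ik}, x_{j\ell}, x_{i\ell}, x_{jk}$ appearing in the three binomials $x_{ij}x_{k\ell}+x_{ik}x_{j\ell}$, $x_{i\ell}x_{jk}+x_{ik}x_{j\ell}$, $x_{ij}x_{k\ell}-x_{i\ell}x_{jk}$ are pairwise distinct because their index pairs are distinct. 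Hence within each binomial, the two quadratic monomials share no variable, and each variable of $\Ssk$ divides at most one monomial in the support. Therefore $\mathcal{G}$ is tidy.

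Having established that $\mathcal{G}$ is a tidy revlex-universal Gr\"obner basis of quadrics for $I$, \Cref{thm:main theorem} directly yields that $\Ssk/I$ is strongly Koszul with respect to the variables, which is the claim. There is no genuine obstacle in this step: all the combinatorial and computational effort is concentrated in \Cref{prop:Pfaffian apolarity} (computing the Hilbert function via \Cref{lem:Hilbert Pfaffian}, writing down the quadrics orthogonal to the $2$-minor-like subspace of $M_2$, and checking the universal Gr\"obner basis property), while the present corollary is a one-line application of the tidy-Gr\"obner-basis machinery of Section~3.
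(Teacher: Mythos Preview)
Your proposal is correct and matches the paper's approach: the corollary is placed immediately after \Cref{prop:Pfaffian apolarity} with no separate proof, since that proposition already asserts that $\mathcal{G}$ is a \emph{tidy} revlex-universal Gr\"obner basis of quadrics, and \Cref{thm:main theorem} then yields strong Koszulness at once. Your explicit verification of tidiness is accurate but redundant, as tidiness is part of the statement of \Cref{prop:Pfaffian apolarity} that you are invoking as a black box.
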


\begin{proof}[Proof of \Cref{prop:Pfaffian apolarity}]
    By \Cref{lem:Hilbert Pfaffian} we know that $M_2$ is spanned by the $4$-Pfaffians of $\Xsk$ and the quadratic part of the ideal $I$ has $\binom{\binom{N}{2}+1}{2} - \binom{N}{4} = \binom{N}{2} + 3\binom{N}{3} + 2\binom{N}{4}$ quadratic generators. Such generators need to be orthogonal to the $4$-Pfaffians of $\Xsk$ with respect to the contraction pairing. Since every choice of $1 \leq i < j < k <\ell \leq N$ yields a $4$-Pfaffian $x_{ij}x_{k\ell} - x_{ik}x_{j\ell} + x_{i\ell}x_{jk}$, it is readily seen that the quadratic part of $I$ contains all the polynomials in $\mathcal{G}$, i.e.,
    \begin{enumerate}
    \item {\makebox[8.5cm]{$x_{ij}^2$\hfill}} for $1 \leq i < j \leq N$;
    \item {\makebox[8.5cm]{$x_{ij}x_{ik}, \ x_{ij}x_{jk}, \ x_{ik}x_{jk}$\hfill}} for $1 \leq i < j < k \leq N$;
    \item {\makebox[8.5cm]{$x_{ij}x_{k\ell}+x_{ik}x_{j\ell},\ x_{i\ell}x_{jk}+x_{ik}x_{j\ell},\ x_{ij}x_{k\ell} - x_{i\ell}x_{jk}$\hfill}} for $1 \leq i < j < k < \ell \leq N$.
    \end{enumerate}

    The above polynomials are $\binom{N}{2} + 3\binom{N}{3} + 3\binom{N}{4}$ many; to generate minimally the quadratic part of $I$, it is enough to discard one of the three linearly dependent polynomials of type (iii) for every quadruple of indices. However, this redundancy makes $\mathcal{G}$ into a revlex-universal Gr\"obner basis of quadrics for $I$, as we now explain.
    
    Consider any total order on the variables of $\Ssk$, let $\rev$ be the associated revlex order and consider the monomial ideal $U = (\init_{\rev}(g) \mid g \in \mathcal{G})$. To finish the proof, it is enough to prove that the Hilbert function of $\Ssk/U$ and the Hilbert function of $\Ssk/I$ coincide, i.e., by \Cref{lem:Hilbert Pfaffian}, that $\HF(\Ssk/U,s) = \binom{N}{2s}$ for every $0 \leq s \leq n$. 
    Since $U$ can be written as $U' + (x_{ij}^2 \mid 1 \leq i < j \leq N)$, the Hilbert function of $\Ssk/U$ coincides with the $f$-vector of the (flag) simplicial complex $\Delta$ whose Stanley--Reisner ideal is $U'$. 
    We hence need to show that, for any fixed $0 \leq s \leq n$, the simplicial complex $\Delta$ has precisely $\binom{N}{2s}$ faces of cardinality $s$, i.e., of dimension $s-1$. Due to the presence of $x_{ij}x_{ik}$, $x_{ij}x_{jk}$ and $ x_{ik}x_{jk}$ inside the Stanley--Reisner ideal $U'$, any $(s-1)$-dimensional face $x_{i_1,i_2}x_{i_3,i_4}\ldots x_{i_{2s-1},i_{2s}}$ of $\Delta$ must feature $2s$ distinct indices.
    
    For the rest of this proof, whenever $a$ and $b$ are distinct numbers in $[N]$, we will write either $x_{ab}$ or $x_{ba}$ to denote the variable $x_{\min\{a,b\}, \max\{a,b\}}$ of $\Ssk$.

    We claim that there is a unique way to partition any four distinct indices $i,j,k,\ell$ in $[N]$ into two pairs so that the associated variables form an edge of $\Delta$; this happens if and only if one of the two pairs corresponds to the variable of $\Ssk$ that is lowest among those featuring $i,j,k,\ell$ as their indices. Indeed, assume without loss of generality that $x_{ij}$ has this property. Then the revlex order $\rev$ selects $x_{ik}x_{j\ell}$ and $x_{i\ell}x_{jk}$ as the leading terms of respectively $x_{ik}x_{j\ell} \pm x_{ij}x_{k\ell}$ and $x_{i\ell}x_{jk} \pm x_{ij}x_{k\ell}$, and thus $x_{ik}x_{j\ell}$ and $x_{i\ell}x_{jk}$ are nonfaces of $\Delta$. All other monomials in $U'$ do not affect $x_{ij}x_{k\ell}$, that is hence an edge of $\Delta$.
    
    We claim that choosing any set $\Hc$ of $2s$ distinct indices in $[N]$ gives rise to precisely one face of $\Delta$. This will prove that $\Delta$ has precisely $\binom{N}{2s}$ faces of dimension $s-1$, as desired. 

    Consider the monomial $\boldsymbol{m} = x_{i_1,i_2}x_{i_3,i_4}\ldots x_{i_{2s-1},i_{2s}}$, where $\Hc = \{i_1,i_2,\ldots, i_{2s}\}$ consists of $2s$ distinct elements, and assume without loss of generality that $x_{i_1,i_2} < x_{i_3,i_4} < \ldots < x_{i_{2s-1},i_{2s}}$. For any $r \in [s]$, let $\mathcal{V}_{\geq {2r-1}}$ denote the set consisting of the variables of $\Ssk$ whose indices belong to $\{i_{2r-1},i_{2r}, \ldots, i_{2s-1},i_{2s}\}$.
    
    Assume there exists $r \in \{1, 2, \ldots, s-1\}$ such that $x_{i_{2r-1},i_{2r}}$ is not the lowest variable $x_{h,k}$ in $\mathcal{V}_{\geq {2r-1}}$. Since $x_{h,k} < x_{i_{2r-1}, i_{2r}} < x_{i_{2t-1}, i_{2t}}$ for any $t > r$, the distinct elements $h$ and $k$ in $\{i_{2r-1},i_{2r}, \ldots, i_{2s-1},i_{2s}\}$ must appear in two distinct variables $x_{hk'}$ and $x_{h'k}$ of $\boldsymbol{m}$, where $h'$ and $k'$ are distinct elements in $\{i_{2r-1},i_{2r}, \ldots, i_{2s-1},i_{2s}\} \setminus \{h,k\}$. It follows from the previous discussion about edges of $\Delta$ that $x_{hk'}x_{h'k}$ is a nonface of $\Delta$, and thus so is $\boldsymbol{m}$.

    Now note that there exists a unique way to order the elements of $\Hc$ so that, for every $t \in \{1, 2, \ldots, s\}$, one has that $i_{2t-1} < i_{2t}$ and the variable $x_{i_{2t-1},i_{2t}}$ is the lowest in $\mathcal{V}_{\geq {2t-1}}$. Under such an ordering of $\Hc$, the monomial $x_{i_{2a-1},i_{2a}}x_{i_{2b-1},i_{2b}}$ is an edge of $\Delta$ for any possible choice of distinct $a, b$ in $\{1, 2, \ldots, s\}$. Since $\Delta$ is a flag simplicial complex, this implies that $\boldsymbol{m}$ itself is a face of $\Delta$, thus concluding the proof.
\end{proof}

\subsection{A connection to Severi varieties} \label{subsec:Severi}

For this section, let $\field=\mathbb{C}$. It was proved by Zak \cite{Zak} that, if $X$ is an $n$-dimensional nondegenerate (i.e.,~not contained in any hyperplane) projective variety in $\mathbb{P}^{\frac{3}{2}n+2}$ whose secant variety of lines does not fill the ambient space, then $X$ is (smooth and) projectively equivalent to one of the four \emph{Severi varieties}, i.e.:
\begin{enumerate}
\item the $2$-dimensional Veronese variety $\nu_2(\mathbb{P}^2) \subseteq \mathbb{P}^5$;
\item the $4$-dimensional Segre variety $\mathbb{P}^2 \times \mathbb{P}^2 \subseteq \mathbb{P}^8$;
\item the $8$-dimensional Grassmannian of $2$-dimensional subspaces of $\mathbb{C}^6$ (or, equivalently, of lines in $\mathbb{P}^5$), i.e., $\mathrm{Gr}(2,6) \subseteq \mathbb{P}^{14}$;
\item the $16$-dimensional octonionic plane (also known as Cayley plane or $E_6$-variety), i.e., $\mathbb{OP}^2 \subseteq \mathbb{P}^{26}$.
\end{enumerate}
Moreover, in all of the above cases, the secant variety of lines is a cubic hypersurface. We wish to prove the following result:

\begin{theorem} \label{thm:Severi}
    Let $X \subseteq \mathbb{P}^N$ be a Severi variety, let $S$ be the coordinate ring of $\mathbb{P}^N$ and let $F$ be the cubic polynomial defining the secant variety of lines of $X$. Then the Artinian Gorenstein ring $S/\mathrm{ann}(F)$ is strongly Koszul with respect to the variables.
\end{theorem}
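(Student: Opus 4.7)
The plan is to treat the four Severi varieties one by one, and in each case to apply \Cref{thm:main theorem} by producing a tidy revlex-universal Gr\"obner basis of quadrics for the apolar ideal $\mathrm{ann}(F)$. The four secant hypersurfaces admit well-known determinantal descriptions, and two of the four cases reduce almost immediately to results already proved in \Cref{sec:apolarity and strong Koszulness}.

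For $X = \mathbb{P}^2 \times \mathbb{P}^2 \subseteq \mathbb{P}^8$, after a linear change of coordinates the Cartan-type cubic $F$ is the determinant of a $3 \times 3$ generic matrix $\Xgen$, and the module generated by this single maximal minor is precisely the $\Sgen$-module considered in \Cref{prop:generic apolarity} with $m = n = 3$; hence the explicit list of tidy quadrics described there does the job. For $X = \mathrm{Gr}(2,6) \subseteq \mathbb{P}^{14}$, one may likewise assume $F = \pf(\Xsk)$ for a $6 \times 6$ generic skew-symmetric matrix, and \Cref{cor:Pfaffian apolarity} (equivalently \Cref{prop:Pfaffian apolarity}) applies directly with $N = 6$.

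For $X = \nu_2(\mathbb{P}^2) \subseteq \mathbb{P}^5$, the secant cubic is the determinant $F$ of a $3 \times 3$ generic symmetric matrix $\Xsym$, a case that is not covered verbatim by the results in \Cref{sec:apolarity and strong Koszulness} but is readily treated by an entirely analogous argument. The plan is: first use the Laplace expansion, together with \Cref{rem:differentiation and contraction for squarefree monomials}, to show that the $s$-th graded piece of the $\Ssym$-submodule of $\mathcal{D}$ generated by $F$ is spanned by the $s$-minors of $\Xsym$, which fixes the Hilbert function of $\Ssym/\mathrm{ann}(F)$; then write down by inspection the expected list of tidy quadratic apolar generators (squares of variables, products of two variables sharing a row or column of $\Xsym$, and the symmetric-matrix analogues of the mixed quadrics $x_{ik}x_{j\ell} + x_{i\ell}x_{jk}$ that arise from the $2$-minors treated in \Cref{prop:symmetric revlex-universal 2-minors}); finally, verify that the monomial ideal of revlex initial forms cuts out a flag complex whose $f$-vector matches the Hilbert function, by a combinatorial argument modeled on the one in the proof of \Cref{prop:generic apolarity}.

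The main obstacle is the Cayley plane $X = \mathbb{OP}^2 \subseteq \mathbb{P}^{26}$, whose secant hypersurface is the Cartan cubic $F$ in $27$ variables arising from $3 \times 3$ octonionic Hermitian matrices. Here the plan is to fix explicit coordinates indexed by the three diagonal and three octonionic off-diagonal entries, expand the apolar quadrics of $F$ by direct calculation, and produce a candidate tidy family of quadrics. The obstruction is that, since the octonions are non-associative, the neat row/column bookkeeping available for ordinary and symmetric matrices is no longer available, so the quadrics must be organized by a different combinatorial device. The strategy is to use the classical configuration of the $27$ lines on a smooth cubic surface (equivalently, the extremal rays of the singular locus of the Cartan cubic, or the $27$ coordinate ``rank-one'' sections of $X$) to index the nonfaces of the associated flag complex, and then to verify by a case analysis of their pairwise intersection patterns that the monomial initial ideal is independent of the revlex term order and has the Hilbert function dictated by the apolar computation; this is where the bulk of the technical work resides.
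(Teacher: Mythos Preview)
Your treatment of cases (ii), (iii) and (iv) is essentially the paper's approach: \Cref{prop:generic apolarity} and \Cref{prop:Pfaffian apolarity} handle the Segre and Grassmannian cases verbatim, and for the Cayley plane the paper also organizes the argument around the $27$ lines and the $45$ tritangent planes (using Lurie's squarefree presentation of the Cartan cubic rather than an octonionic expansion, which makes the bookkeeping cleaner).

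The Veronese case, however, contains a genuine gap. Your plan is to run the symmetric-matrix analogue of \Cref{prop:generic apolarity} and produce a tidy revlex-universal Gr\"obner basis of quadrics for $\mathrm{ann}(\det \Xsym)$, including ``squares of variables'' among the generators. This cannot work in the standard coordinates. The cubic $F=\det(\Xsym)$ is \emph{not} a $\field$-linear combination of squarefree monomials (it contains $-X_{12}^2X_{33}$, $-X_{13}^2X_{22}$, $-X_{23}^2X_{11}$), so \Cref{rem:differentiation and contraction for squarefree monomials} does not apply, and in fact $x_{12}^2 \circ_{\mathrm{ctr}} F = -X_{33} \neq 0$, so $x_{12}^2\notin\mathrm{ann}(F)$. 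By the Eisenbud--Reeves--Totaro obstruction recalled just after \Cref{thm:Severi}, an Artinian ideal with a revlex-universal Gr\"obner basis of quadrics must contain the square of \emph{every} variable; hence $\mathrm{ann}(F)$ admits no revlex-universal quadratic Gr\"obner basis in these coordinates, and \Cref{thm:main theorem} is not available here. The paper sidesteps this by verifying strong Koszulness directly from the definition via a computation of the colon ideals (and explicitly leaves open in \Cref{qu:Veronese} whether any linear change of coordinates would make your strategy viable). You need to replace your argument for case (i) with such a direct check.
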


To prove \Cref{thm:Severi} for the octonionic plane, we need to introduce some of its properties. Let $X = \mathbb{OP}^2$ be the octonionic plane embedded in $\mathbb{P}^{26}$. Then the secant variety $SX$ is defined by a cubic hypersurface that is invariant under the action of the group $E_6$. The defining polynomial $F$ of this hypersurface (sometimes known as the Cayley cubic hypersurface) can be expressed as the determinant of a generic $3 \times 3$ $\mathbb{O}$-Hermitian matrix \cite[Section 3.1]{IlievManivel}, but such an expression is not optimal for our purposes. Instead, we will use a result of Lurie \cite{Lurie} connecting the Cayley cubic hypersurface to the geometry of the 27 lines on a smooth cubic surface in $\mathbb{P}^3$. We briefly recall some facts about the latter, using \cite[Section 9.1]{Dolgachev} as our main source. Any smooth cubic surface in $\mathbb{P}^3$ arises as the blowup of six points $p_1, \ldots, p_6$ of $\mathbb{P}^2$ in general position, and the 27 lines on the surface can be subdivided into ``$a$-lines, $b$-lines and $c$-lines'' (this is not standard notation!) as follows:
\begin{itemize}
    \item for each $i \in [6]$, one has a line $a_i$ corresponding to the exceptional divisor of the blowup at $p_i$;
    \item for each $i \in [6]$, there is a line $b_i$ arising from the strict transform of the conic of $\mathbb{P}^2$ passing through all the $p_j$'s except $p_i$;
    \item for every $i,j \in [6]$ with $i<j$, one has a line $c_{ij}$ corresponding to the strict transform of the line through $p_i$ and $p_j$. (In what follows, the symbol $c_{ji}$ will also denote $c_{ij}$.)
\end{itemize}

Moreover, there exist 45 \emph{tritangent planes}, i.e., planes of $\mathbb{P}^3$ that intersect the surface in exactly three of the 27 lines and are tangent to the surface in the (possibly coincident) points where the three lines intersect pairwise. Every tritangent plane can be identified by enumerating which lines on the surface it contains; in what follows, we will write $\ell mn$ (and possibly permutations thereof) to denote the tritangent plane containing the lines $\ell$, $m$ and $n$. The 45 tritangent planes can be listed as follows (compare, e.g., \cite[Lemma 9.1.8]{Dolgachev}):
\begin{itemize}
    \item for every $i,j \in [6]$ with $i<j$, one has the two planes $a_ib_jc_{ij}$ and $a_jb_ic_{ij}$;
    \item for every partition of $[6]$ into three pairs $(i,j), (k,\ell), (m,n)$, one has the plane $c_{ij}c_{k\ell}c_{mn}$.
\end{itemize}

In particular, each of the 27 lines on the cubic surface is contained in exactly five distinct tritangent planes. We record in the following lemma some observations that will be useful in the proof of \Cref{thm:Severi}.

\begin{lemma} \label{lem:27 lines}
    Let $\mathcal{S}$ be a smooth cubic surface in $\mathbb{P}^3$ and let $\mathcal{L}$ be the set of the 27 lines contained in $\mathcal{S}$. Then:
    \begin{enumerate}
        \item Given any $\mathcal{L}' \subseteq \mathcal{L}$ with $|\mathcal{L}'| \geq 4$, there exist $\ell_1$, $\ell_2$ in $\mathcal{L}'$ such that $\ell_1 \cup \ell_2$ is not contained in any tritangent plane.
        \item Let $\ell_1$, $\ell_2$ and $\ell_3$ be three distinct lines in $\mathcal{L}$ and suppose that for every choice of two distinct indices $i,j \in [3]$ there exists a tritangent plane $\pi_{\ell_i,\ell_j}$ containing the union of $\ell_i$ and $\ell_j$. Then $\pi_{\ell_1, \ell_2} = \pi_{\ell_1, \ell_3} = \pi_{\ell_2,\ell_3} = \ell_1\ell_2\ell_3$. 
        \item For any $\ell \in \mathcal{L}$ and any tritangent plane $\rho$ not containing $\ell$, there exists a tritangent plane $\sigma$ that contains $\ell$ and meets $\rho$ in another line $\ell' \in \mathcal{L}$. 
    \end{enumerate}
\end{lemma}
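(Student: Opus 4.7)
The three parts all rest on one foundational observation that I would establish first: two distinct lines $\ell, \ell' \in \mathcal{L}$ lie in a common tritangent plane if and only if they meet, and in that case the tritangent plane is uniquely determined as the plane of $\mathbb{P}^3$ spanned by $\ell \cup \ell'$. One direction is trivial (two coplanar lines in $\mathbb{P}^3$ meet); conversely, two intersecting lines on $\mathcal{S}$ span a plane $\pi$, and $\pi \cap \mathcal{S}$ is a plane cubic curve containing $\ell \cup \ell'$, so it must decompose as a union of three lines and $\pi$ is tritangent. The plan is to deduce all three statements from this fact.

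For (iii), which is the cleanest, the plan is purely geometric. Since $\ell \not\subset \rho$, the intersection $q := \ell \cap \rho$ is a single point lying in $\mathcal{S} \cap \rho$. Because $\rho$ is tritangent, $\mathcal{S} \cap \rho$ is the union of three lines $\ell_1', \ell_2', \ell_3' \in \mathcal{L}$, so $q \in \ell_i'$ for some $i$. By the observation, $\ell$ and $\ell_i'$ span a tritangent plane $\sigma$; since $\sigma \neq \rho$ (as $\ell \subset \sigma$ but $\ell \not\subset \rho$), the intersection $\sigma \cap \rho$ is a single line containing $\ell_i'$, hence equal to $\ell_i' \in \mathcal{L}$.

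For (ii), by the observation $\ell_1, \ell_2, \ell_3$ pairwise meet, and the plan is to split into two cases. If the three lines share a common point $p$, then all three lie in $T_p \mathcal{S}$ (since any line on $\mathcal{S}$ through $p$ is tangent to $\mathcal{S}$ at $p$); this tangent plane is therefore tritangent, and by uniqueness equals each $\pi_{\ell_i, \ell_j}$. Otherwise, the three pairwise intersection points are distinct, so $\ell_3$ meets $\ell_1$ and $\ell_2$ at two different points of $\pi_{\ell_1, \ell_2}$, forcing $\ell_3 \subset \pi_{\ell_1, \ell_2}$; again uniqueness forces the three planes to coincide. The possible subtlety here, which I see as the main obstacle, is the Eckardt-type case: the lemma is stated for arbitrary smooth cubic surfaces, and one must be careful not to rule out Eckardt points \emph{a priori}, instead handling them via the tangent-plane argument.

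Finally, (i) follows from (ii) by contradiction. Suppose four distinct lines $\ell_1, \ell_2, \ell_3, \ell_4 \in \mathcal{L}'$ pairwise share tritangent planes. Apply (ii) to the triples $\{\ell_1, \ell_2, \ell_3\}$ and $\{\ell_1, \ell_2, \ell_4\}$, obtaining tritangent planes $\pi$ and $\pi'$ containing them respectively. By the uniqueness in the foundational observation, the tritangent plane through $\ell_1 \cup \ell_2$ is unique, so $\pi = \pi'$, which forces $\ell_4 \in \pi$. But a tritangent plane contains exactly three lines of $\mathcal{L}$, giving the desired contradiction.
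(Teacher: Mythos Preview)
Your argument is correct, and it is genuinely different from the paper's. The paper proves all three parts by direct case analysis in the explicit $a_i,b_i,c_{ij}$ labeling of the 27 lines and the accompanying list of 45 tritangent planes: part (i) checks that among any four labels one finds two that do not appear together in the list, part (ii) classifies the few ways three labels can pairwise appear in the list, and part (iii) exhibits the desired $\sigma$ by hand in each of several subcases depending on whether $\ell$ is an $a$-, $b$-, or $c$-line and which type of tritangent plane $\rho$ is.

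By contrast, you work coordinate-free, reducing everything to the single observation that two lines on $\mathcal{S}$ lie in a tritangent plane precisely when they meet, in which case that plane is unique. Your proof of (iii) via the point $\ell\cap\rho$ is cleaner than the paper's enumeration, and deducing (i) from (ii) plus the degree bound on a plane section is more conceptual than the label-chasing. Your handling of the concurrent (Eckardt) case in (ii) via the tangent plane is exactly the right move. The paper's approach has the advantage of being self-contained within the combinatorial framework it already needs for the surrounding proof of \Cref{thm:Severi}, where the $a,b,c$ labels index the variables of the ring; your approach has the advantage of making the geometry transparent and avoiding any casework.
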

\begin{proof}
\phantom{.}
    \begin{enumerate}
        \item A direct inspection of the list of tritangent planes yields immediately that, if $\mathcal{L}'$ contains two distinct $a$-lines or two distinct $b$-lines, then the claim holds. Assume otherwise. Then $\mathcal{L}'$ contains at least two $c$-lines. If at least two of them share an index, the claim holds. If instead any two $c$-lines in $\mathcal{L'}$ do not share any indices, then $\mathcal{L}'$ contains at most three $c$-lines, and hence it must contain at least one $a$- or $b$-line as well, say $a_i$ without loss of generality. Then at least one of the $c$-lines in $\mathcal{L}'$ does not feature the index $i$, and the claim holds. 
        \item Let $\ell_1$, $\ell_2$, $\ell_3$ be distinct lines in $\mathcal{L}$ such that the union of any two of them is contained in some tritangent plane. Then at least one of $\ell_1$, $\ell_2$ and $\ell_3$ must be a $c$-line; without loss of generality, say $\ell_1 = c_{ij}$ for some distinct indices $i$ and $j$. If one of $\ell_2$ and $\ell_3$ is also a $c$-line, then it must be that $\ell_2 = c_{kp}$ and $\ell_3 = c_{qr}$, with $\{i,j,k,p,q,r\} = [6]$. Otherwise, it must be that $\ell_2 = a_i$ and $\ell_3 = b_j$ or $\ell_2 = a_j$ and $\ell_3 = b_i$. In all cases, the claim holds.
        \item We have to go through several cases according to which line and plane we select. Since the cases with $\ell = b_i$ are entirely analogous to the ones with $\ell = a_i$, we will list only the latter.
        \begin{itemize}
        \item If $\ell = a_i$ and $\rho = a_hb_kc_{hk}$ (with $h \neq i$), one can pick $\sigma = a_ib_kc_{ik}$ (if $k \neq i$) or $\sigma = a_ib_hc_{hi}$ (if $k=i$).
        \item If $\ell = a_i$ and $\rho = c_{ij}c_{pq}c_{rs}$, one can pick $\sigma = a_ib_jc_{ij}$.
        \item If $\ell = c_{ij}$ and $\rho = a_hb_kc_{hk}$ (with $\{h,k\} \neq \{i,j\})$, there are two subcases: 
        \begin{itemize}
           \item if $\{i,j\} \cap \{h,k\} = \varnothing$, one can pick $\sigma = c_{ij}c_{hk}c_{pq}$ with $\{i,j,h,k,p,q\} = [6]$;
           \item if $|\{i,j\} \cap \{h,k\}| = 1$, then we can assume without loss of generality that $i \in \{h,k\}$. One can then pick $\sigma = a_ib_jc_{ij}$ (if $h=i$) or $\sigma = a_jb_ic_{ij}$ (if $k=i$).
        \end{itemize}
        \item If $\ell = c_{ij}$ and $\rho = c_{ih}c_{jk}c_{pq}$ with $\{i,j,h,k,p,q\} = [6]$, one can pick $\sigma = c_{ij}c_{hk}c_{pq}$. 
        \end{itemize}
    \end{enumerate}
\end{proof}

\begin{proof}[Proof of \Cref{thm:Severi}]
We analyze separately the cases (i)--(iv) from Zak's classification of Severi varieties.

\begin{enumerate}
\item When $X$ is the Veronese surface with its standard embedding in $\mathbb{P}^5$, the secant variety $SX$ is defined by the determinant $F$ of the $3 \times 3$ generic symmetric matrix. One checks by direct computation that $S/\mathrm{ann}(F)$ yields a strongly Koszul algebra with respect to the variables (but see \Cref{qu:Veronese} and the discussion preceding it).
\item When $X$ is the Segre embedding of $\mathbb{P}^2 \times \mathbb{P}^2$ into $\mathbb{P}^8$, the secant variety $SX$ is defined by the determinant $F$ of the $3 \times 3$ generic matrix. Then, by \Cref{prop:generic apolarity}, $\mathrm{ann}(F)$ admits a tidy revlex-universal Gr\"obner basis of quadrics, and hence $S/\mathrm{ann}(F)$ is strongly Koszul with respect to the variables.
\item When $X = \mathrm{Gr}(2,6)$ with its standard Pl\"ucker embedding in $\mathbb{P}^{14}$, the secant variety $SX$ is defined by the $6$-Pfaffian of the generic $6 \times 6$ skew-symmetric matrix. Then, by \Cref{prop:Pfaffian apolarity}, $\mathrm{ann}(F)$ admits a tidy revlex-universal Gr\"obner basis of quadrics, and hence $S/\mathrm{ann}(F)$ is strongly Koszul with respect to the variables.
\item Let $X = \mathbb{OP}^2$ be the octonionic plane embedded in $\mathbb{P}^{26}$ and let $F$ be the cubic polynomial defining its secant variety $SX$. Name the 27 variables of the polynomial ring $S$ so that they match the $a, b, c$-notation introduced previously for lines on the cubic surface. Lurie proved in \cite{Lurie} (see also \cite[Remark 9.1.15]{Dolgachev}, \cite[Section 2.2]{BBF}) that $F$ can be written as a signed combination of the 45 squarefree cubic monomials of $S$ corresponding to tritangent planes (here, as previously discussed, we identify any given tritangent plane via the lines on the surface contained in it). The precise sign pattern will not matter to us; in fact, all we need is that $F$ is a linear combination of the tritangent planes, and every tritangent plane appears with a nonzero coefficient in such a combination. 
Now consider the apolar ideal $I = \mathrm{ann}(F)$. By \cite[\S 2.2]{apolarity}, the ideal $I$ contains no linear forms if and only if the projective variety $V(F)$ is not a cone; in our case, $V(F)=SX$, and Zak proved in \cite[Lemma 2]{Zak} that $SX$ is not a cone. Hence, the ideal $I$ contains no linear forms.
Since $F$ is a cubic polynomial in 27 variables and $I$ contains no linear forms, the Hilbert series of the Artinian Gorenstein ring $S/I$ must be $1+27t+27t^2+t^3$. Moreover, $I$ contains the square of every variable of $S$ and every monomial $\ell\ell'$ where $\ell$ and $\ell'$ are two distinct lines on the cubic surface whose union is not contained in any tritangent plane. To prove that $S/I$ is strongly Koszul with respect to the variables, it is enough to show that the following set $\mathcal{G}$ of $3 \cdot (6 + \binom{6}{2} + 6 \cdot \binom{5}{2}) = 243$ quadratic monomials and $27 \cdot \binom{5}{2} = 270$ quadratic binomials in $I$ form a tidy revlex-universal Gr\"obner basis for $I$:

\begin{itemize}
\item $a_i^2$, $b_i^2$, $a_ib_i$, $c_{ij}^2$, $a_ia_j$, $b_ib_j$, $a_ic_{jk}$, $b_ic_{jk}$, $c_{ij}c_{ik}$, where $i, j, k$ are distinct indices in $[6]$;
\item $\frac{\pi}{\ell} \pm \frac{\pi'}{\ell}$, where $\ell$ is any of the 27 lines and $\pi$, $\pi'$ are any two of the five tritangent planes containing $\ell$. (The exact sign depends on the sign pattern in $F$, but we won't need to be more precise here.)
\end{itemize}

Fix a total order on the variables of $S$ (i.e., on the 27 lines) and let $\rev$ be the associated revlex order. We want to prove that $\mathcal{G}$ is a (tidy) Gr\"obner basis for $I$ with respect to $\rev$. Let $J$ be the monomial ideal of $S$ generated by $\{\init_{\rev}(g) \mid g \in \mathcal{G}\}$. Then $J \subseteq \init_{\rev}(I)$, and to prove our claim it is enough to show that the Hilbert functions of $S/J$ and $S/I$ coincide. Since $\HF(S/J, i) \geq \HF(S/I,i)$ for every $i \in \NN$, it is enough to prove the following three statements:
\smallskip
\begin{enumerate}[label=(\Alph*)]
    \item The ideal $J$ contains all monomials of $S$ of degree $4$ and higher.
    \item The ideal $J$ contains at least $\binom{28}{2}-27 = 351$ quadratic monomials.
    \item The ideal $J$ contains every cubic monomial of $S$ different from the one corresponding to the revlex-lowest tritangent plane.
\end{enumerate}
\smallskip

Let us prove claim (A) first. Let $m$ be a monomial of $S$ of degree $4$ or higher. If $m$ is not squarefree, then it is divided by $\ell^2 \in J$ for some line $\ell$. If instead $m$ is squarefree then, by \Cref{lem:27 lines}(i), there exists a quadratic monomial of $J$ dividing $m$, and thus we are done.

Next, let us consider claim (B). For every line $\ell$ contained in the cubic surface, there exist exactly five tritangent planes containing $\ell$. Let $\pi$ be the revlex-lowest among them. Then, for every other tritangent plane $\pi'$ containing $\ell$, one has that $\frac{\pi'}{\ell} \pm \frac{\pi}{\ell} \in \mathcal{G}$, and $\init_{\rev}(\frac{\pi'}{\ell} \pm \frac{\pi}{\ell}) = \frac{\pi'}{\ell} \in J$. Since no two distinct tritangent planes can share two of the $27$ lines, we have just found $27 \cdot (5-1) = 108$ quadratic monomials in $J$. Putting together these monomials with the $243$ already present in $\mathcal{G}$, we get that $J$ contains at least $351$ quadratic monomials, as desired.

Finally, let us prove claim (C). Let $\ell_{\min}$ be the lowest line according to the total order we fixed and let $\pi_{\min}$ denote either the revlex-lowest tritangent plane or the squarefree cubic monomial in $S$ associated with it. Note that the plane $\pi_{\min}$ must contain $\ell_{\min}$. Let $m$ be any cubic monomial in $S$ different from $\pi_{\min}$. If $m$ is not squarefree, then it is a multiple of the square of some variable, and such a quadratic monomial lies in $J$. Assume now that $m$ is squarefree, and let $m = L_1L_2L_3$. If there exist two distinct indices $i,j \in [3]$ such that $L_i \cup L_j$ is not contained in any tritangent plane, then the monomial $L_iL_j$ lies in $J$ and divides $m$. Assume instead that $L_1 \cup L_2$, $L_1 \cup L_3$ and $L_2 \cup L_3$ are all contained in some tritangent plane. Then, by \Cref{lem:27 lines}(ii), the lines $L_1$, $L_2$ and $L_3$ are all contained in the same tritangent plane $\rho$, and $\rho = L_1L_2L_3 = m \neq \pi_{\min}$.
\begin{itemize}
    \item If $\rho$ contains $\ell_{\min}$, then $\frac{\rho}{\ell_{\min}} \pm \frac{\pi_{\min}}{\ell_{\min}} \in \mathcal{G}$ and $\init_{\rev}(\frac{\rho}{\ell_{\min}} \pm \frac{\pi_{\min}}{\ell_{\min}}) = \frac{\rho}{\ell_{\min}}$, since otherwise $\rho$ would be revlex-lower than $\pi_{\min}$. Hence, $\frac{\rho}{\ell_{\min}}$ lies in $J$ and so does $\rho$.
    \item If $\rho$ does not contain $\ell_{\min}$, then by \Cref{lem:27 lines}(iii) there exists a tritangent plane $\sigma$ containing $\ell_{\min}$ and meeting $\rho$ in another line $L$ on the cubic surface. Then $\frac{\rho}{L} \pm \frac{\sigma}{L} \in \mathcal{G}$. Moreover, since $\ell_{\min}$ divides $\frac{\sigma}{L}$ but not $\frac{\rho}{L}$, one has that $\frac{\sigma}{L}$ is revlex-lower than $\frac{\rho}{L}$, and so $\init_{\rev}(\frac{\rho}{L} \pm \frac{\sigma}{L}) = \frac{\rho}{L}$. Thus, $\frac{\rho}{L}$ lies in $J$ and so does $\rho$. 
\end{itemize}
\end{enumerate}
\end{proof}

The previous proof relies on algebraic-combinatorial methods: it is not a priori clear how strong Koszulness interacts with the rich geometric information available in the setting of \Cref{thm:Severi}. It is thus natural to ask:

\begin{question}
    Is there a geometric interpretation of \Cref{thm:Severi}?
\end{question}

Going back to algebra, the ideal $\mathrm{ann}(F)$ turns out to admit a tidy revlex-universal Gr\"obner basis of quadrics, at least after a suitable linear change of coordinates, in cases (ii), (iii) and (iv) of \Cref{thm:Severi}. It is unclear to us whether this holds also for case (i), i.e., when $X$ is the Veronese surface in $\mathbb{P}^5$ (even though the universal Gr\"obner basis in the usual coordinates turns out to be tidy, thus simplifying the computation of colon ideals). If the homogeneous ideal $I \subseteq S$ admits a quadratic Gr\"obner basis and the quotient ring $S/I$ is Artinian, a result by Eisenbud, Reeves and Totaro \cite[Theorem 19]{ERT} implies that $I$ must contain the square of the lowest variable in $S$. It follows that, if $F \in S_d$ and $\mathrm{ann}(F)$ admits a revlex-universal Gr\"obner basis of quadrics, then $\mathrm{ann}(F)$ must contain the square of \emph{every} variable in $S$; in particular, $F$ must be a linear combination of squarefree monomials. This prompts the following concrete questions:

\begin{question} \label{qu:Veronese}
Let $S = \mathbb{C}[x_{i,j} \mid 1 \leq i \leq j \leq 3]$ and let $F$ be the determinant of the generic symmetric $3 \times 3$ matrix with entries in $S$, i.e., \[F = x_{1,1}x_{2,2}x_{3,3} + 2x_{1,2}x_{1,3}x_{2,3} - (x_{1,2}^2x_{3,3} + x_{1,3}^2x_{2,2} + x_{2,3}^2x_{1,1}).\] 

\begin{enumerate}
    \item Does there exist a linear change of coordinates $g \in \mathrm{GL}(S_1)$ such that $\mathrm{ann}(gF)$ admits a (tidy) revlex-universal Gr\"obner basis of quadrics?
    \item The linear change of coordinates \[g'\colon\begin{cases}x_{1,1} \mapsto x_{1,1}-x_{1,2}-x_{1,3}\\ x_{1,2} \mapsto x_{1,2}\\ x_{1,3} \mapsto x_{1,3} \\ x_{2,2} \mapsto -x_{1,2}+x_{2,2}-x_{2,3} \\ x_{2,3} \mapsto x_{2,3} \\ x_{3,3} \mapsto -x_{1,3}-x_{2,3}+x_{3,3}\end{cases}\]
    transforms $F$ into a signed sum of $16$ squarefree cubic monomials. (However, $\mathrm{ann}(g'F)$ does not admit a revlex-universal Gr\"obner basis of quadrics, since $S/\mathrm{ann}(g'F)$ is not strongly Koszul with respect to the variables.) What is the minimum $N$ such that $gF$ is a linear combination of $N$ squarefree cubic monomials for some $g \in \mathrm{GL}(S_1)$?
\end{enumerate}
\end{question}

\section{Strongly Koszul algebras are not always G-quadratic} \label{sec:not G-quadratic}
After learning about the results in \Cref{sec:tidy}, the reader might suspect that the defining ideal of a ring that is strongly Koszul with respect to the variables must admit a quadratic Gr\"obner basis, maybe even a revlex one. This was asked already in the original paper \cite{HHR} by Herzog, Hibi and Restuccia (for the toric case) and in a later survey \cite{CDR} by Conca, De Negri and Rossi (for the general case).

\begin{question} \label{qu:G-quadratic}
\phantom{.}
    \begin{enumerate}
    \item \cite[Question 4.13(1)]{CDR} If $R=S/I$ is strongly Koszul, does $I$ admit a Gr\"obner basis of quadrics (possibly after a change of coordinates)?
    \item \cite[p.~166]{HHR} If $R=S/I$ is a strongly Koszul toric ring, does $I$ admit a Gr\"obner basis of quadrics?
    \end{enumerate}
\end{question}

The aim of this section is to provide a negative answer to \Cref{qu:G-quadratic}(i). The reader interested in the concrete examples may skip to \Cref{prop:smallest counterexample} for an Artinian Gorenstein example in four variables and to \Cref{prop:many counterexamples} for an infinite family of Artinian level examples with embedding dimension greater than or equal to five.

For starters, let us note that it is easy to find examples of algebras that are strongly Koszul with respect to variables, but do not admit a Gr\"obner basis of quadrics in the given coordinates: for instance, one checks that the ideal $I = (x^2+y^2, xy) \subseteq \field[x,y]$ exhibits such a behavior. However, the linear change of coordinates sending $x$ into $x+y$ transforms $I$ into the ideal $(x^2, xy+y^2)$, which does admit a Gr\"obner basis of quadrics. To get a genuine example of a strongly Koszul algebra that does not admit a Gr\"obner basis of quadrics even after a change of coordinates, we need to try a little harder.

Eisenbud, Reeves and Totaro found in \cite[Theorem 19]{ERT} an obstruction for an ideal to have a quadratic initial ideal. We state below a corollary of their result that will be enough for our purposes.

\begin{lemma}[compare {\cite[Theorem 19]{ERT}}] \label{lem:obstruction to G-quadraticity}
Let $S = \field[x_1, \ldots, x_n]$ and let $I \subseteq S$ be a quadratic homogeneous ideal such that $S/I$ is Artinian. If there exists no linear form $\ell \in S_1$ such that $\ell^2 \in I$, then $I$ does not admit any Gr\"obner basis of quadrics, even after a linear change of coordinates.
\end{lemma}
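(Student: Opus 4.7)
My plan is to argue by contrapositive: assume $I$ admits a Gr\"obner basis of quadrics after some linear change of coordinates $g \in \mathrm{GL}(S_1)$ with respect to some term order $<$, and produce a linear form $\ell \in S_1$ with $\ell^2 \in I$. Writing $J := g(I)$, the quotient $S/J$ remains Artinian (change of coordinates preserves the isomorphism class), and the hypothesis that $J$ has a quadratic Gr\"obner basis means that $\init_{<}(J)$ is a quadratic monomial ideal whose quotient $S/\init_{<}(J)$ is also Artinian (Hilbert functions coincide).

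The first key observation I would record is that a quadratic monomial ideal $U \subseteq S$ with $S/U$ Artinian must contain $x_i^2$ for every variable $x_i$. Indeed, Artinianness forces some power $x_i^k$ to lie in $U$, but then one of the quadratic monomial generators of $U$ divides $x_i^k$, and the only quadratic monomial dividing a pure power of $x_i$ is $x_i^2$ itself. Applying this to $U = \init_{<}(J)$ I obtain in particular that $x_n^2 \in \init_{<}(J)$, where $x_n$ denotes the smallest variable under $<$.

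The second key observation, which I view as the core of the argument, is that $x_n^2$ is the \emph{strictly smallest} quadratic monomial under $<$. This follows from the defining multiplicative compatibility of term orders: for any quadratic monomial $x_ix_j \neq x_n^2$ at least one factor is strictly greater than $x_n$, so multiplying the strict inequality $x_i > x_n$ by $x_j$ and then $x_n \geq x_n$ by $x_n$ yields $x_ix_j > x_n x_j \geq x_n^2$. Consequently, a polynomial $f$ in the quadratic Gr\"obner basis with $\init_{<}(f) = x_n^2$ cannot have any additional terms (since every other quadratic monomial is strictly larger than the leading term $x_n^2$, which is impossible). Hence $f$ is a scalar multiple of $x_n^2$, so $x_n^2 \in J = g(I)$, and therefore $\ell^2 \in I$ for $\ell := g^{-1}(x_n) \in S_1$, contradicting the hypothesis.

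I expect the only mildly delicate point to be the verification that $x_n^2$ is the minimum quadratic monomial for \emph{every} term order (with its associated smallest variable $x_n$), and this is just the two-line multiplicativity check above. Everything else is bookkeeping, so no substantial obstacle is anticipated.
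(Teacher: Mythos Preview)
Your argument is correct. Note that the paper does not actually supply a proof of this lemma: it is stated without argument as a corollary of \cite[Theorem 19]{ERT}. Your proof is a self-contained verification, and the key mechanism---that the square of the smallest variable is the unique minimum quadratic monomial under any term order, so any quadric in the Gr\"obner basis with that leading term must equal (a scalar multiple of) the pure square---is precisely the idea underlying the Eisenbud--Reeves--Totaro result in this special case. In effect you have reproved the needed piece of the cited theorem rather than deferring to it.
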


Provided the characteristic of $\field$ is different from $2$, the existence of linear forms whose square lies in the quadratic ideal $I$ can be investigated using the differentiation pairing.

\begin{definition} \label{def:orthogonality}
    Let $d \in \mathbb{N}$, $\field$ be a field of characteristic $0$ or greater than $d$, and $S = \field[x_1, \ldots, x_n]$. Then the symmetric bilinear pairing $S_d \times S_d \to \field$ obtained by partial differentiation as in \Cref{subsec:differentiation} is nonsingular. (Note that in this setting the characteristic of $\field$ is allowed to be positive, provided it is greater than $d$.) In what follows, we will use $(-)^{\perp}$ to denote orthogonality with respect to this pairing. 
\end{definition}

Note that the claim of the next lemma would \emph{not} hold if we used the contraction pairing instead of the differentiation one.

\begin{lemma}[compare {\cite[Lemma 2.12]{DKoszul}}] \label{lem:apolarity}
Let $\field$ be a field of characteristic $0$ or greater than $d$, let $S = \field[x_1, \ldots, x_n]$ and let $V$ be a vector subspace of $S_d$. Then there exists a bijection between the nonzero linear forms $\ell \in S_1$ such that $\ell^d \in V$ and the points of the projective variety whose defining equations are a $\field$-basis of $V^{\perp}$.
\end{lemma}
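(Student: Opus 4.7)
The plan is to exploit the nondegeneracy of the differentiation pairing in combination with a direct computation of $\langle g, \ell^d\rangle$ for $g$ a degree-$d$ polynomial and $\ell$ a linear form, which will reveal this pairing to be (up to a factor) ordinary evaluation of $g$ at the coefficient vector of $\ell$.

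First, I would record the general fact that, under our hypothesis on $\mathrm{char}(\field)$, the differentiation pairing from \Cref{def:orthogonality} is nondegenerate, so $(V^{\perp})^{\perp}=V$. Hence $\ell^d \in V$ is equivalent to $\langle g, \ell^d\rangle=0$ for every $g$ in any fixed $\field$-basis $g_1,\ldots,g_r$ of $V^{\perp}$.

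Next, write $\ell=a_1x_1+\cdots+a_nx_n$ with $a_i\in\field$. A straightforward computation using the multinomial expansion shows that for every multi-index $\alpha\in\mathbb{N}^n$ with $|\alpha|=d$ one has
\[
\partial^\alpha(\ell^d)=d!\cdot a^\alpha.
\]
Hence, writing $g=\sum_{|\alpha|=d}c_\alpha x^\alpha$, one gets
\[
\langle g,\ell^d\rangle \;=\; g\circ_{\mathrm{diff}}\ell^d \;=\; \sum_{|\alpha|=d}c_\alpha\,\partial^\alpha(\ell^d) \;=\; d!\cdot\sum_{|\alpha|=d}c_\alpha a^\alpha \;=\; d!\cdot g(a_1,\ldots,a_n).
\]
Since $d!$ is invertible in $\field$ by our characteristic assumption, the condition $\langle g,\ell^d\rangle=0$ is the same as the polynomial evaluation $g(a_1,\ldots,a_n)=0$.

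Putting the two steps together yields that $\ell^d\in V$ if and only if $g_i(a_1,\ldots,a_n)=0$ for every $i\in\{1,\ldots,r\}$, i.e., if and only if the point $[a_1:\cdots:a_n]$ belongs to the projective variety $X\subseteq\mathbb{P}^{n-1}$ cut out by the basis $g_1,\ldots,g_r$ of $V^{\perp}$. Since the assignment $\ell\mapsto[a_1:\cdots:a_n]$ only depends on $\ell$ up to a nonzero scalar (and $\ell^d\in V$ is also scale-invariant because $(c\ell)^d=c^d\ell^d$), we obtain the desired bijection between scalar classes of nonzero linear forms $\ell$ with $\ell^d\in V$ and the points of $X$. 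The only delicate point is the explicit evaluation of $\partial^\alpha(\ell^d)$, which is the standard computation underlying Macaulay's apolarity and presents no real difficulty once one invokes the characteristic hypothesis to guarantee that $d!\neq 0$.
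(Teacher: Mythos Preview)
The paper does not actually supply a proof of this lemma: it is stated with a pointer to \cite[Lemma 2.12]{DKoszul} and then used. Your argument is correct and is exactly the standard one behind such statements: nondegeneracy of the differentiation pairing gives $(V^{\perp})^{\perp}=V$, and the key identity $\partial^{\alpha}(\ell^d)=d!\,a^{\alpha}$ converts the apolarity condition into evaluation, so that $\ell^d\in V$ if and only if the coefficient vector of $\ell$ lies on the projective zero locus of a basis of $V^{\perp}$. Your handling of the scalar ambiguity (the bijection is really between projective classes of linear forms and points of the variety) is appropriate and matches how the lemma is used downstream in the paper.
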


If we further assume that $\field$ is algebraically closed, then we can use the weak projective Nullstellensatz \cite[Theorem 8.3.8]{CLO} to derive the following corollary of \Cref{lem:apolarity}:

\begin{corollary} \label{cor:no d-th powers}
Let $\field$ be an algebraically closed field of characteristic $0$ or greater than $d$, let $S = \field[x_1, \ldots, x_n]$, let $V$ be a vector subspace of $S_d$ and let $I$, $J$ be the ideals of $S$ generated by $V$ and $V^{\perp}$, respectively. Then there exist no nonzero linear forms $\ell \in S_1$ such that $\ell^d \in I$ if and only if the quotient ring $S/J$ is Artinian.
\end{corollary}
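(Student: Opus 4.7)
The plan is to chain together Lemma~\ref{lem:apolarity} with the weak projective Nullstellensatz, using the fact that both $I$ and $J$ are homogeneous ideals concentrated in degrees $\geq d$.

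First I would observe that since $I$ is generated by the degree-$d$ subspace $V$, we have $I_d = V$, and in particular for any linear form $\ell \in S_1$ the homogeneous element $\ell^d$ belongs to $I$ if and only if it belongs to $V$. Thus the statement ``there exists no nonzero $\ell \in S_1$ with $\ell^d \in I$'' is equivalent to ``there exists no nonzero $\ell \in S_1$ with $\ell^d \in V$.'' Lemma~\ref{lem:apolarity} (which applies because $\mathrm{char}(\field) = 0$ or $\mathrm{char}(\field) > d$) then translates this condition into the geometric statement that the projective variety cut out by a $\field$-basis of $V^{\perp}$ in $\mathbb{P}^{n-1}_{\field}$ is empty. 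Since $J$ is by definition the ideal generated by $V^{\perp}$, this projective variety is exactly $V(J) \subseteq \mathbb{P}^{n-1}_{\field}$.

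Next I would invoke the weak projective Nullstellensatz (\cite[Theorem 8.3.8]{CLO}, which applies because $\field$ is algebraically closed): the projective vanishing locus $V(J)$ is empty if and only if $J$ contains some power $\mathfrak{m}^N$ of the irrelevant maximal ideal $\mathfrak{m} = (x_1, \ldots, x_n)$. For a homogeneous ideal $J$, the condition $\mathfrak{m}^N \subseteq J$ is in turn equivalent to $S/J$ being a finite-dimensional $\field$-vector space, i.e., Artinian. Combining these equivalences yields the desired biconditional.

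I do not expect any substantive obstacle here; the argument is essentially a dictionary translation via Macaulay's inverse system plus the Nullstellensatz, and the only small subtlety is the observation that $I_d = V$ (so that ``$\ell^d \in I$'' and ``$\ell^d \in V$'' genuinely coincide), which is immediate from the fact that $V$ sits in a single degree and generates $I$.
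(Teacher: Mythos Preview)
Your proposal is correct and follows exactly the route the paper indicates: the paper does not give a separate proof but simply states that the corollary follows from \Cref{lem:apolarity} via the weak projective Nullstellensatz, and you have filled in precisely those details. The observation that $I_d = V$ (so that $\ell^d \in I \Leftrightarrow \ell^d \in V$) is the only point the paper leaves entirely implicit, and you have handled it correctly.
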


\begin{remark} \label{rem:arithmetic}
    Let $\field$ be a field of characteristic different from $2$. When $\field$ is not quadratically closed, i.e., when there exists at least an element $a \in \field$ such that the equation $z^2=a$ has no solutions in $\field$, one can modify slightly the example in the discussion after \Cref{qu:G-quadratic} to get the ideal $I = (x^2+ay^2, xy) \subseteq \field[x,y]$. Then $\{x^2-ay^2\}$ is a $\field$-basis for $(I_2)^{\perp}$ and hence, using \Cref{lem:apolarity}, there are no nonzero linear forms in $S$ whose square lies in $I$. By \Cref{lem:obstruction to G-quadraticity}, the ideal $I$ does not admit any quadratic Gr\"obner basis, even after a change of coordinates.
\end{remark}    

In order to be able to apply \Cref{cor:no d-th powers} and overcome arithmetic obstacles like the ones in \Cref{rem:arithmetic}, for the rest of the section we will often assume that the field $\field$ is algebraically closed. Moreover, in order to use the differentiation pairing in \Cref{def:orthogonality}, we will further assume that the characteristic of $\field$ is different from $2$. 

We are now ready to show that \Cref{qu:G-quadratic}(i) has a negative answer. We first present an Artinian Gorenstein example in four variables. When working over the complex numbers, such a ring corresponds to the  defining equation of the Clebsch cubic surface via Macaulay's inverse system.

\begin{proposition} \label{prop:smallest counterexample}
    Let $\field$ be any field, $S = \field[x,y,z,t]$, $I = (x^2-yz, y^2-zt, z^2-tx, t^2-xy, xz, yt)$ and $R = S/I$. Then:
    \begin{enumerate}
    \item the ring $R$ is Artinian Gorenstein, has Hilbert function $(1,4,4,1)$ and corresponds via Macaulay's inverse system (using the contraction action) to the cubic $X^2Y+Y^2Z+Z^2T+T^2X \in \mathcal{D}$;
    \item the ideal $I$ admits a tidy universal Gr\"obner basis of quadrics and cubics, namely,
    \[\begin{split}\mathcal{G} = \{&x^2-yz, y^2-zt, z^2-tx, t^2-xy, xz, yt,\\&x^3, y^3, z^3, t^3, xy^2, yz^2, zt^2, tx^2, x^2y-z^2t, y^2z-t^2x\};\end{split}\]
    \item the ring $R$ is strongly Koszul with respect to the variables;
    \item if moreover $\field$ is an algebraically closed field of characteristic different from $2$, $3$ and $5$, then the ideal $I$ has no quadratic Gr\"obner basis, even after a linear change of coordinates.
    \end{enumerate}
\end{proposition}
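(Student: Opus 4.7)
For parts (i) and (ii), I would proceed in tandem. Tidiness of $\mathcal{G}$ is clear by inspection. To verify that $\mathcal{G}$ is a universal Gr\"obner basis of $I$, I would apply Buchberger's criterion: compute the relevant S-polynomials and reduce them modulo $\mathcal{G}$, leveraging the cyclic symmetry $(x \to y \to z \to t \to x)$, which is an automorphism of $I$ and of $\mathcal{G}$, to cut the number of term-order cases by a factor of four; for each binomial in $\mathcal{G}$, the swap of its two monomials corresponds to a further symmetry of coordinate relabelings. Once $\mathcal{G}$ is known to be a Gr\"obner basis, the corresponding initial ideal is monomial with quotient of $\field$-dimension $10$, yielding $\HF(S/I) = (1,4,4,1)$. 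For (i), I would then verify directly that each generator of $I$ annihilates $F$ under contraction: for instance $x^2 \circ_{\mathrm{ctr}} F = Y = yz \circ_{\mathrm{ctr}} F$, so $(x^2-yz) \circ_{\mathrm{ctr}} F = 0$, and similarly for the other five generators. This shows $I \subseteq \mathrm{ann}(F)$. A degree-by-degree count of the $S$-submodule $M = \langle F\rangle_S \subseteq \mathcal{D}$ gives $\dim M_s = 1, 4, 4, 1$ for $s=0,1,2,3$ (e.g.\ $M_2$ is spanned by $X, Y, Z, T$, and $x^2y \circ_{\mathrm{ctr}} F = 1$); since $\HF(S/\mathrm{ann}(F), s) = \dim M_s$, equality $I = \mathrm{ann}(F)$ follows, and Gorensteinness of $R$ follows from $M$ being cyclic.

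For part (iii), by \Cref{cor:tidy revlex UGB} applied to the tidy universal Gr\"obner basis from (ii), each colon ideal $\overline{Y} :_R \overline{v}$ is a monomial ideal of $R$. Using the cyclic symmetry of the generators of $I$, the checks reduce to eight orbit representatives (one for $|Y|=0$, three for $|Y|=1$, three for $|Y|=2$, and one for $|Y|=3$). Direct computation in the $10$-dimensional ring $R$, using the multiplication table given by the relations, yields $0 :_R \overline{x} = (\overline{z})$, $(\overline{x}) :_R \overline{y} = (\overline{x}, \overline{z}, \overline{t})$, $(\overline{x}) :_R \overline{z} = (\overline{x}, \overline{y}, \overline{z})$, $(\overline{x}) :_R \overline{t} = (\overline{x}, \overline{y}, \overline{t})$, while every colon with $|Y| \geq 2$ equals the maximal ideal $(\overline{x}, \overline{y}, \overline{z}, \overline{t})$. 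Each such colon is thus generated by variables, which establishes strong Koszulness.

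For part (iv), I would apply \Cref{lem:obstruction to G-quadraticity}: since $R$ is Artinian and quadratic, it suffices to show that no nonzero linear form $\ell \in S_1$ satisfies $\ell^2 \in I$. Computing $(I_2)^\perp \subseteq S_2$ under the differentiation pairing of \Cref{def:orthogonality} gives the four-dimensional subspace spanned by $g_1 = x^2 + 2yz$, $g_2 = y^2 + 2zt$, $g_3 = z^2 + 2xt$, $g_4 = t^2 + 2xy$. By \Cref{cor:no d-th powers}, the claim reduces to showing that $V(g_1,g_2,g_3,g_4) \subseteq \mathbb{P}^3_\field$ is empty. Given a projective solution $(x_0:y_0:z_0:t_0)$, multiplying the four defining equations $x_0^2 = -2y_0z_0$, $y_0^2 = -2z_0t_0$, $z_0^2 = -2x_0t_0$, $t_0^2 = -2x_0y_0$ yields $(x_0y_0z_0t_0)^2 = 16(x_0y_0z_0t_0)^2$, whence $15(x_0y_0z_0t_0)^2 = 0$. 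The hypothesis $\mathrm{char}(\field) \notin \{2,3,5\}$ then forces $x_0y_0z_0t_0 = 0$, and a short case split (e.g.\ $x_0 = 0$ implies $z_0 = 0$ via $g_3$, then $y_0 = 0$ via $g_2$, then $t_0 = 0$ via $g_4$) forces the projective point to vanish, a contradiction.

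The most delicate step is the universal Gr\"obner basis verification in (ii), where tracking all term orders requires the symmetry reduction to keep the bookkeeping finite; the case analysis in (iii) is routine but still requires enumerating the eight orbit representatives. Parts (i) and (iv), by contrast, become essentially linear-algebra computations once (ii) is in place.
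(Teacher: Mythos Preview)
Your proof is correct and tracks the paper's argument closely. Parts (i)--(iii) are handled exactly as in the paper, which simply defers (ii) and (iii) to direct (or computer-assisted) computation; you supply more of the bookkeeping by exploiting the cyclic $\mathbb{Z}_4$-symmetry and invoking \Cref{cor:tidy revlex UGB} to reduce the colon-ideal checks to eight orbit representatives. One small slip: in (i) you write that $M_2$ is spanned by $X,Y,Z,T$, but under the paper's grading convention those are the degree-$1$ elements, i.e.\ a basis of $M_1$; the degree-$2$ piece $M_2$ is spanned by the four first-order contractions $x \circ_{\mathrm{ctr}} F = XY+T^2$, etc. The dimension count and the conclusion $I=\mathrm{ann}(F)$ are unaffected.

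For (iv) both you and the paper compute $(I_2)^\perp$ and reduce, via \Cref{lem:obstruction to G-quadraticity} and \Cref{cor:no d-th powers}, to showing that the ideal $J=(x^2+2yz,\,y^2+2zt,\,z^2+2tx,\,t^2+2xy)$ defines the empty projective variety (equivalently, that $S/J$ is Artinian). The paper argues algebraically in $S/J$, chaining $\overline{xt^2}=-2\overline{x^2y}=4\overline{y^2z}=-8\overline{z^2t}=16\overline{xt^2}$ to get $15\overline{xt^2}=0$ and then $\overline{x^4}=0$. You instead argue geometrically at a putative point, multiplying the four relations to obtain $(x_0y_0z_0t_0)^2 = 16(x_0y_0z_0t_0)^2$ and then running a short case split. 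Both routes exploit the same cyclic structure to produce the factor $15=2^4-1$; yours is a touch more direct, while the paper's stays inside the ring and yields the Artinian complete-intersection statement explicitly.
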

\begin{proof} 
Note that the ideal $I$ is generated by monomials and differences of monomials; as a consequence, all Gr\"obner-based computations will yield the same results regardless of the characteristic of the field $\field$.

 For parts (ii) and (iii) it is enough to fix a specific field (say, the rationals $\mathbb{Q}$) and check the computations by hand or with the aid of a computer algebra system like \texttt{Macaulay2} \cite{M2}. One can compute $\mathcal{G}$ from scratch using \texttt{Gfan} \cite{gfan} or its Macaulay2 port \texttt{gfanInterface} \cite{gfanInterface}, or simply check that all possible S-pairs of elements in $\mathcal{G}$ reduce to zero.

 For part (i), let us consider $F := X^2Y+Y^2Z+Z^2T+T^2X \in \mathcal{D}$. Since $F$ is a cubic and the $\field$-vector space generated by $x \circ_{\mathrm{ctr}} F, \ y \circ_{\mathrm{ctr}} F, \ z \circ_{\mathrm{ctr}} F, \ t \circ_{\mathrm{ctr}} F$ is $4$-dimensional, one has that the Hilbert function of the Artinian Gorenstein ring $S/\mathrm{ann}(F)$ must be $(1,4,4,1)$. One checks immediately that all the generators of $I$ lie in $\mathrm{ann}(F)$ as well. Since $S/I$ also has Hilbert series $(1,4,4,1)$, as can be checked via characteristic-free Gr\"obner methods, it must then be that $I = \mathrm{ann}(F)$.  

 As for part (iv), let us now assume that $\field$ is an algebraically closed field of characteristic different from $2$, $3$ and $5$. By \Cref{lem:obstruction to G-quadraticity}, it is enough to prove that there exists no linear form $\ell \in S_1$ such that $\ell^2 \in I$. Since $\field$ is algebraically closed and $\mathrm{char}(\field) \neq 2$ by assumption, the hypotheses of \Cref{cor:no d-th powers} are met. Let $J$ be the ideal of $S$ generated by the vector space $(I_2)^{\perp}$, where $(-)^{\perp}$ is as in \Cref{def:orthogonality}. Since $\dim_{\field}((S/I)_2) = 4$, we have that $\dim_{\field}((I_2)^{\perp}) = 4$ as well. A $\field$-basis for $(I_2)^{\perp}$, and thus a minimal generating set for the ideal $J$, is given by $\{x^2+2yz, y^2+2zt, z^2+2tx, t^2+2xy\}$. If we can prove that $S/J$ is Artinian (actually, an Artinian complete intersection of quadrics), we are done by \Cref{cor:no d-th powers}. In $S/J$ one has that $\overline{xt^2} = -2\overline{x^2y} = 4\overline{y^2z} = -8\overline{z^2t} = 16\overline{xt^2},$ whence $15\overline{xt^2} = \overline{0}$. Since $\mathrm{char}(\field) \notin \{3, 5\}$, it follows that $\overline{xt^2} = \overline{0}$, and analogously $\overline{x^2y} = \overline{y^2z} = \overline{z^2t} = \overline{0}$. Then $\overline{x^4} = -2\overline{x^2yz} = \overline{0}$, and analogously $\overline{y^4} = \overline{z^4} = \overline{t^4} = \overline{0}$, which proves that $S/J$ is Artinian. This concludes the proof.
\end{proof}

\begin{remark}
Let $\field$ be an algebraically closed field of characteristic zero. Under this assumption, another way to prove part (iv) of \Cref{prop:smallest counterexample} can be found in the paper \cite{ConcaRossiValla} by Conca, Rossi and Valla. Note first that, using the differentiation pairing instead of the contraction one, it holds that $I = \mathrm{ann}(F)$, where $F := X^2Y+Y^2Z+Z^2T+T^2X \in \mathcal{S}$. Since the cubic hypersurface $X^2Y+Y^2Z+Z^2T+T^2X$ is smooth (e.g., by the Jacobian criterion), applying \cite[Proposition 6.2]{ConcaRossiValla} yields that the ideal $I$ does not admit a Gr\"obner basis of quadrics even after a linear change of coordinates.
\end{remark}

\begin{remark}
It can be proved analogously to \Cref{prop:smallest counterexample} that, if $I$ is the ideal apolar (via the contraction pairing) to the polynomial $X_1^2X_2 + X_2^2X_3 + X_3^2X_4 + X_4^2X_5 + X_5^2X_1 \in \mathcal{D}$, then $S/I$ is an Artinian Gorenstein ring that is strongly Koszul with respect to the variables. If moreover $\field$ is algebraically closed of characteristic different from $2$, $3$ and $11$, then $I$ has no quadratic Gr\"obner basis, even after a linear change of coordinates. However, trying to extend this example to six variables (i.e., considering the ideal apolar to $X_1^2X_2 + X_2^2X_3 + X_3^2X_4 + X_4^2X_5 + X_5^2X_6 + X_6^2X_1$) fails, as the resulting ring is not strongly Koszul with respect to the variables.
\end{remark}

We conclude this paper by constructing an infinite family of ideals exhibiting a behavior analogous to the ideal $I$ from \Cref{prop:smallest counterexample}.

For any $n \geq 3$, let $C_n$ be the cycle graph on $n$ vertices, i.e.,~the finite simple graph with edges $\{1,n\}$ and $\{i,i+1\}$ for every $i \in \{1,2,\ldots,n-1\}$.

\begin{proposition} \label{prop:many counterexamples}
Let $\field$ be any field, $n \geq 5$, $S = \field[x_1, \ldots, x_n]$, $k = \lfloor\frac{n+1}{2}\rfloor$ and \[I = (x_i^2-x_{i+k-1}x_{i+k} \mid 1 \leq i \leq n) + (x_ix_j \mid i\neq j, \ \{i,j\} \notin E(C_n)),\]
where by convention indices are taken modulo $n$: e.g.,~$x_{n+j} = x_j$. Then:
\begin{enumerate}
\item the ring $S/I$ is Artinian with Hilbert function $(1, n, n)$; \item the ring $S/I$ is level and the ideal $I$ is apolar (via the contraction action) to the $S$-submodule $M$ of $\mathcal{D}$ generated by $F_1, \ldots, F_n$, where $F_i := X_i^2+X_{i+k-1}X_{i+k}$ and indices are taken modulo $n$;
\item the ideal $I$ admits a tidy universal Gr\"obner basis consisting of $\binom{n}{2}$ quadrics and $3n$ cubics, namely,
\[\begin{split}\mathcal{G} = \{x_i^2-x_{i+k-1}&x_{i+k} \mid 1 \leq i \leq n\} \cup \{x_ix_j \mid i\neq j, \ \{i,j\} \notin E(C_n)\}\\&\cup \{x_i^3, x_i^2x_{i+1}, x_i^2x_{i-1} \mid 1 \leq i \leq n\}\end{split}\]
(where all indices are again taken modulo $n$);
\item the ring $S/I$ is strongly Koszul with respect to the variables;
\item if moreover $\field$ is an algebraically closed field of characteristic different from $2$ and from any prime divisor of $2^n + (-1)^{n+1}$, then the ideal $I$ has no quadratic Gr\"obner basis, even after a linear change of coordinates.
\end{enumerate}
\end{proposition}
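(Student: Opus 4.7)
Parts (i), (ii), (iii) all reduce to a single Hilbert function computation (the ring $R = S/I$ is concentrated in degrees $\leq 2$), and (iv) then follows from a direct analysis on this low-dimensional ring. The main work is (v).

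\textbf{Parts (i), (ii), (iii).} The $n$ binomials and the $\binom{n}{2}-n$ non-edge monomials in $I_2$ are linearly independent: the binomials are distinguished by their $x_i^2$ terms, whose supports are disjoint from the non-edge monomials. Hence $\dim(S/I)_2 = n$. Every cubic monomial lies in $I$: a non-edge pair handles most of them, while $x_i^3$ rewrites via the binomial as $x_i \cdot x_{i+k-1}x_{i+k}$ and now contains the non-edge pair $\{i, i+k-1\}$ (since $k \geq 3$), and $x_i^2 x_{i \pm 1}$ is handled similarly. This proves (i). For (ii), one checks directly that the quadratic generators of $I$ annihilate each $F_j$ under contraction; the $F_i$ are linearly independent and their first contractions span $\mathcal{D}_1$, so $\dim M_s = (1,n,n)_s$. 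Macaulay's duality then forces $I = \mathrm{ann}(M)$, and levelness is immediate since the socle of $R$ equals $R_2$. For (iii), tidiness is clear by inspection. For any term order $\prec$, the initial ideal of $\mathcal{G}$ contains all $\binom{n}{2}-n$ non-edge monomials, one of the two monomials in each binomial (these $n$ choices are pairwise distinct and distinct from the non-edge ones), and all $3n$ cubic monomials. The same cubic case analysis then forces $\HF(S/\langle \init_{\prec}\mathcal{G}\rangle) = (1,n,n) = \HF(S/I)$, so $\mathcal{G}$ is a Gr\"obner basis for every $\prec$.

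\textbf{Part (iv).} Fix $Y \subseteq \{x_1,\ldots,x_n\}$ and $x_i \notin Y$. Since $R_3 = 0$, all of $R_2$ lies in $\overline{Y} :_R \overline{x}_i$ automatically. For a linear form $\overline{r} = \sum_a \mu_a \overline{x}_a$, expanding $\overline{r} \cdot \overline{x}_i$ leaves only the three terms $\mu_{i-1}\overline{x}_{i-1}\overline{x}_i + \mu_{i+1}\overline{x}_i\overline{x}_{i+1} + \mu_i \overline{x}_{i+k-1}\overline{x}_{i+k}$ (all other monomials vanish by the non-edge relations). For $n \geq 5$ these are three distinct basis vectors of $R_2$, so each coefficient is constrained independently, with the condition being exactly that the corresponding variable $\overline{x}_{i-1}$, $\overline{x}_{i+1}$, or $\overline{x}_i$ itself lies in $\overline{Y} :_R \overline{x}_i$. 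Variables $\overline{x}_a$ with $a \notin \{i-1, i, i+1\}$ are in the colon ideal automatically (since $\overline{x}_a \overline{x}_i = 0$). To conclude, the ideal generated by the colon-ideal variables contains all of $R_2$: for $n \geq 5$, $\overline{x}_{i-k}$ and $\overline{x}_{i-k+1}$ are automatic, and their squares $\overline{x}_{i-k}^2 = \overline{x}_{i-1}\overline{x}_i$ and $\overline{x}_{i-k+1}^2 = \overline{x}_i \overline{x}_{i+1}$ supply the two remaining edges of $C_n$ adjacent to $i$.

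\textbf{Part (v): the main obstacle.} By \Cref{lem:obstruction to G-quadraticity} and \Cref{cor:no d-th powers}, it suffices to show that the ideal $J := (x_i^2 + 2x_{i+k-1}x_{i+k} \mid i \in \ZZ/n)$ generated by a basis of $(I_2)^{\perp}$ under the differentiation pairing defines an Artinian quotient; equivalently, $V(J) = \{\mathbf{0}\}$ inside $\overline{\field}^n$. Given $(\xi_1,\ldots,\xi_n) \in V(J)$, let $T = \{i : \xi_i = 0\}$. The relation $\xi_i^2 = -2\xi_{i+k-1}\xi_{i+k}$ immediately forces $i \notin T \Rightarrow i+k-1, i+k \notin T$. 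Since $\gcd(k-1,k)=1$, the subsemigroup of $\ZZ/n$ generated by $k-1$ and $k$ is all of $\ZZ/n$, so $T^c$ is either empty or all of $\ZZ/n$. In the latter case, multiplying all $n$ relations yields $(\prod_i\xi_i)^2 = (-2)^n (\prod_i\xi_i)^2$: the key (and subtle) step is the telescoping identity $\prod_i \xi_{i+k-1}\xi_{i+k} = (\prod_i \xi_i)^2$, which uses that each $\xi_j$ appears in exactly two of the $n$ consecutive pairs $(i+k-1, i+k)$. Hence $(-2)^n = 1$, i.e., the characteristic divides $2^n + (-1)^{n+1}$, contradicting the hypothesis; therefore $T = \ZZ/n$ and $V(J) = \{\mathbf{0}\}$.
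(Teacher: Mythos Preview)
Your proof is correct throughout. Parts (i)--(iii) are essentially the paper's argument. In (iv) you sharpen the approach slightly: rather than invoking \Cref{cor:tidy revlex UGB} to know the colon ideal is monomial, you observe directly that $\overline{Y}_2$ is a coordinate subspace in the edge-monomial basis of $R_2$, so the three independent constraints on $\mu_{i-1},\mu_i,\mu_{i+1}$ already force $(\overline{Y}:\overline{x}_i)_1$ to be spanned by variables. The final step---recovering the two edges through $i$ as $\overline{x}_{i-k}^2$ and $\overline{x}_{i-k+1}^2$---matches the paper's case split into $n=2k-1$ and $n=2k$, just packaged more uniformly.

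Part (v) is where your argument genuinely diverges from the paper's and is cleaner. The paper works inside $S/J$: it defines ``moves'' $\overline{\mathbf{x}}^{\mathbf b}\mapsto -2\,\overline{\mathbf{x}}^{\mathbf b'}$, checks that applying all $n$ moves to $x_1^2\cdots x_n^2$ returns $(-2)^n$ times itself (forcing $x_1^2\cdots x_n^2\in J$), and then separately argues that $\overline{x}_1^M$ is a multiple of this monomial for $M\gg 0$. You instead pass to the variety $V(J)\subseteq\overline{\field}^{\,n}$: the relation $\xi_i^2=-2\xi_{i+k-1}\xi_{i+k}$ shows the nonvanishing locus is stable under $i\mapsto i+k-1$ and $i\mapsto i+k$, and since $\gcd(k-1,k)=1$ this forces all-or-nothing; multiplying the $n$ relations and using the telescoping $\prod_i\xi_{i+k-1}\xi_{i+k}=(\prod_i\xi_i)^2$ immediately gives $(-2)^n=1$, a contradiction. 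This replaces the paper's two-claim algebraic computation with a single structural observation, at the cost of invoking the Nullstellensatz (which is implicit in \Cref{cor:no d-th powers} anyway). Both approaches pinpoint the same arithmetic obstruction $1-(-2)^n$.
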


\begin{proof}
Note that, just like in \Cref{prop:smallest counterexample}, the ideal $I$ is generated by monomials and differences of monomials, and hence all Gr\"obner-based computations will yield the same results regardless of the characteristic of the field $\field$.
\begin{enumerate}
\item The ideal $I$ is generated by $n + \frac{n(n-3)}{2} = \binom{n}{2}$ linearly independent quadrics, whence $\dim_{\field}(S/I)_2 = \binom{n+1}{2} - \binom{n}{2} = n$. We now want to show that every monomial of degree $3$ belongs to $I$. Every squarefree cubic monomial contains at least two variables that are not adjacent in the $n$-cycle $C_n$; hence, such a monomial belongs to $I$. Due to the cyclic $\mathbb{Z}_n$-symmetry of the generators of $I$, it is now enough to check that $x_1^3$, $x_1^2x_2$ and $x_1x_2^2$ belong to $I$. Working in the quotient ring $S/I$, one has that $\overline{x_1^3} = \overline{x_1x_kx_{k+1}} = \overline{0}$, $\overline{x_1^2x_2} = \overline{x_2x_kx_{k+1}} = \overline{0}$ and $\overline{x_1x_2^2} = \overline{x_1x_{k+1}x_{k+2}} = \overline{0}$ since $2 < k < n-1$. 
\item Since the $\field$-vector space generated by $\{x_i \circ_{\mathrm{ctr}} F_j \mid i,j \in \{1, \ldots, n\}\}$ is $n$-dimensional, one has that the Hilbert function of $S/\mathrm{ann}(M)$ is $(1,n,n)$. One checks immediately that all generators of $I$ belong to $\mathrm{ann}(M)$ and thus, since the Hilbert functions coincide, $I = \mathrm{ann}(M)$.
\item Since $(x_1, \ldots, x_n)^3 \subseteq I$ by part (i), it follows that the generators of $I$ and all the cubic monomials of $S$ form a (tidy) universal Gr\"obner basis for $I$; one can then throw away the cubic monomials that are divisible by $x_ix_j$ for some distinct $i$ and $j$ not adjacent in the $n$-cycle $C_n$, obtaining the set $\mathcal{G}$.
\item Let us pick a proper subset $Y$ of $\{x_1, \ldots, x_n\}$ and $x \in \{x_1, \ldots, x_n\} \setminus Y$. We need to ensure that the colon ideal $\overline{Y} :_R \overline{x}$ is generated by variables, where $R = S/I$. By part (iii) and \Cref{cor:tidy revlex UGB}, we know beforehand that $\overline{Y} :_{R} \overline{x}$ is generated by monomials in $R$. Due to the cyclic symmetry of the generators of $I$, we can assume without loss of generality that $x = x_1$. It is immediate that $(\overline{x_3}, \overline{x_4}, \ldots, \overline{x_{n-1}}) \subseteq \overline{Y} :_R \overline{x_1}$. If we can prove that all quadratic monomials in the variables $\overline{x_1}, \overline{x_2}, \overline{x_n}$ belong to $(\overline{x_3}, \overline{x_4}, \ldots, \overline{x_{n-1}})$, we are done; indeed, if that is the case, the ideal $\overline{Y} :_R \overline{x_1}$ is generated by $\overline{x_3}, \overline{x_4}, \ldots, \overline{x_{n-1}}$ and possibly some of $\overline{x_1}, \overline{x_2}, \overline{x_n}$.

We need to treat separately the cases when $n$ is odd (recalling that $k = \lfloor\frac{n+1}{2}\rfloor$, this implies that $n=2k-1$) and when $n$ is even (which implies that $n=2k$). Note that, in both cases, the monomial $x_2x_n$ belongs to $I$, and thus $\overline{x_2x_n} = \overline{0}$.

\textbf{Case $n = 2k-1$:} since $k \geq 3$, it holds that $k+1 < 2k-1 = n$. In particular, $x_k$ and $x_{k+1}$ both belong to $\{x_3, x_4, \ldots, x_{n-1}\}$. Then, since $\overline{x_1}^2 = \overline{x_kx_{k+1}}$, $\overline{x_1x_2} = \overline{x_{k+1}}^2$, $\overline{x_1x_n} = \overline{x_k}^2$, $\overline{x_2}^2 = \overline{x_{k+1}x_{k+2}}$ and $\overline{x_n}^2 = \overline{x_{k-1}x_k}$, we are done.

\textbf{Case $n = 2k$:} since $k \geq 3$, it holds that $k+2 < 2k = n$. In particular, $x_k$, $x_{k+1}$ and $x_{k+2}$ all belong to $\{x_3, x_4, \ldots, x_{n-1}\}$. Then, since $\overline{x_1}^2 = \overline{x_kx_{k+1}}$, $\overline{x_1x_2} = \overline{x_{k+2}}^2$, $\overline{x_1x_n} = \overline{x_{k+1}}^2$, $\overline{x_2}^2 = \overline{x_{k+1}x_{k+2}}$ and $\overline{x_n}^2 = \overline{x_{k-1}x_k}$, we are done.

\item As in the proof of \Cref{prop:smallest counterexample}, since $\field$ is an algebraically closed field of characteristic different from $2$, it is enough to prove that the ideal $J$ of $S$ generated by the vector space $(I_2)^{\perp}$ gives rise to an Artinian quotient ring (actually, a complete intersection of quadrics). Since $\dim_{\field}((S/I)_2) = n$, we have that $\dim_{\field}((I_2)^{\perp}) = n$ as well. A $\field$-basis for $(I_2)^{\perp}$, and thus a minimal generating set for the ideal $J$, is given by $\{x_i^2+2x_{i+k-1}x_{i+k} \mid 1 \leq i \leq n\}$ (where indices are taken modulo $n$). By the cyclic symmetry of the generators of $J$, it is enough to prove that $x_1^M \in J$ for some $M$ large enough. In order to get there, it is enough to show that the following two claims hold:

\textbf{Claim 1:} $J$ contains the monomial $x_1^{2}x_2^{2}\ldots x_n^{2}$.

\textbf{Claim 2:} for $M$ large enough, $\overline{x_1^M}$ is a multiple of $\overline{x_1^{2}x_2^{2}\ldots x_n^{2}} = \overline{0}$ in $S/J$. 

Let $c \cdot {\mathbf{x}}^{\mathbf{b}} = c \cdot x_1^{b_1}x_2^{b_2}\ldots x_n^{b_n} \in S$. Note that, if $b_i \geq 2$, then $c \cdot \overline{\mathbf{x}}^{\mathbf{b}} = -2c \cdot \overline{\mathbf{x}}^{\mathbf{b'}}$ in $S/J$, where $b'_i = b_i - 2$, $b'_{i+k-1} = b_{i+k-1}+1$, $b'_{i+k} = b_{i+k}+1$ and $b'_j = b_j$ for all other indices $j$. In what follows, provided that $b_i \geq 2$, we will say that the \emph{$i$-th move} transforms $c \cdot {\mathbf{x}}^{\mathbf{b}}$ into $-2c \cdot {\mathbf{x}}^{\mathbf{b'}}$. If one can perform in a sequence all $n$ possible moves on a certain ${\mathbf{x}}^{\mathbf{b}}$, it follows that $\overline{\mathbf{x}}^{\mathbf{b}} = (-2)^n \cdot \overline{\mathbf{x}}^{\mathbf{b}}$, and thus $(1 - (-2)^{n}) \cdot {\mathbf{x}}^{\mathbf{b}} \in J$. Due to the hypothesis on the characteristic of $\field$, one then has that $(1 - (-2)^{n}) \neq 0$, and thus ${\mathbf{x}}^{\mathbf{b}} \in J$.

To prove Claim 1 it now suffices to observe that one can perform all $n$ possible moves in a sequence on $x_1^{2}x_2^{2}\ldots x_n^{2}$, and thus $x_1^{2}x_2^{2}\ldots x_n^{2} \in J$.

To prove Claim 2, observe first that $\overline{x_1^4}$ is a multiple of $\overline{x_n}$; indeed, applying the first move twice and then the $k$-th move one gets that $\overline{x_1^4} = -8\overline{x_{k+1}^2 x_{2k-1}x_{2k}}$, and the latter monomial contains both $x_{2k-1}$ and $x_{2k}$ (one of which is $x_n$, depending on the parity). Due to the cyclic symmetry on the possible moves, one has that $\overline{x_n^4}$ is a multiple of $\overline{x_{n-1}}$, and so on. It follows that, for $M$ large enough, the monomial $\overline{x_1^M}$ equals a multiple of $\overline{x_1^{2}x_2^{2}\ldots x_n^{2}} = \overline{0}$ in $S/J$, which ends the proof.
\end{enumerate}
\end{proof}

\bibliographystyle{alpha} 
\bibliography{SK_bibliography}
\end{document}